\documentclass[12pt,a4paper]{amsart}

\usepackage{mathtools,amssymb,bm,setspace}
\usepackage[left=26mm,right=26mm,top=24mm,bottom=24mm,foot=15mm]{geometry}
\usepackage{graphicx}
\usepackage{bbm}
\usepackage[usenames]{xcolor}
\usepackage[noadjust]{cite}

\usepackage{enumitem}
\setitemize{leftmargin=*}   
\setenumerate{leftmargin=*} 

\usepackage[colorlinks=true,citecolor=red,linkcolor=blue]{hyperref} 

\usepackage[capitalise,noabbrev]{cleveref}

\newcommand{\bea}{\begin{eqnarray}} 
\newcommand{\eea}{\end{eqnarray}}

\allowdisplaybreaks

\newcommand{\Z}{\mathbb{Z}} 
\newcommand{\C}{\mathbb{C}}

\newcommand{\N}{\mathbb{Z}_{\ge0}}

\numberwithin{equation}{section}


%

\newtheorem{theorem}{Theorem}[section]
\newtheorem{lemma}[theorem]{Lemma}
\newtheorem{proposition}[theorem]{Proposition}
\newtheorem{corollary}[theorem]{Corollary}

\theoremstyle{definition}
\newtheorem{definition}[theorem]{Definition}
\newtheorem{example}[theorem]{Example}

\newtheorem{remark}[theorem]{Remark}

\newcommand{\abs}[1]{\lvert#1\rvert}

\newcommand{\pd}{\partial}
\DeclareMathOperator{\id}{id}
\newcommand{\vac}{\mathbbm{1}}
\newcommand{\parts}{\mathcal{P}} 

\newcommand{\suchthat}{\mspace{5mu} {:} \mspace{5mu}} 
\DeclareMathOperator{\spn}{span}
\DeclareMathOperator{\im}{im}
\DeclareMathOperator{\End}{End}

\newcommand{\bilin}[2]{\left\langle #1 , #2 \right\rangle}
\newcommand{\no}[1]{\mathopen{:} #1 \mathclose{:}}         
\newcommand{\lira}{\ensuremath{\lhook\joinrel\relbar\joinrel\rightarrow}} 

\makeatletter
	\providecommand*{\twoheadrightarrowfill@}{%
	  \arrowfill@\relbar\relbar\twoheadrightarrow
	}
	\providecommand*{\twoheadleftarrowfill@}{%
	  \arrowfill@\twoheadleftarrow\relbar\relbar
	}
	\providecommand*{\xtwoheadrightarrow}[2][]{%
	  \ext@arrow 0579\twoheadrightarrowfill@{#1}{#2}%
	}
	\providecommand*{\xtwoheadleftarrow}[2][]{%
	  \ext@arrow 5097\twoheadleftarrowfill@{#1}{#2}%
	}
\makeatother

\newcommand{\alg}[1]{\mathfrak{#1}}                      

\newcommand{\SLA}[2]{\alg{#1}_{#2}}                      
\newcommand{\SLSA}[3]{\alg{#1} (#2 \vert #3)}            

\newcommand{\uaff}[2]{V^{#1}(#2)}            
\newcommand{\saff}[2]{L_{#1}(#2)}            
\newcommand{\uwalg}[3]{W^{#1}(#2;#3)}        
\newcommand{\swalg}[3]{W_{#1}(#2;#3)}        
\newcommand{\usl}[1]{\uaff{#1}{\SLA{sl}{3}}} 
\newcommand{\uslk}{\usl{k}}                  

\newcommand{\centusl}{\mathcal{Z}(\usl{-3})} 

\newcommand{\zhu}[1]{A(#1)}                  

\newcommand{\bpsymb}{\textup{BP}}
\newcommand{\zamsymb}{\textup{Z}}

\newcommand{\ubp}[1]{\bpsymb^{#1}} 
\newcommand{\sbp}[1]{\bpsymb_{#1}} 
\newcommand{\uzam}[1]{\zamsymb^{#1}} 
\newcommand{\szam}[1]{\zamsymb_{#1}} 

\newcommand{\ubpk}{\ubp{k}}   
\newcommand{\sbpk}{\sbp{k}}   
\newcommand{\uzamk}{\uzam{k}} 
\newcommand{\szamk}{\szam{k}} 

\newcommand{\heis}{H}               
\newcommand{\lvoa}{\Pi}             
\newcommand{\lmod}[2]{\Pi_{#1}(#2)} 
\newcommand{\rmod}[2]{R_{#1}(#2)}   

\newcommand{\bpsfsymb}{\sigma}          
\newcommand{\lsfsymb}{\gamma}           

\newcommand{\tp}[1]{#1_{\textup{top}}} 


\DeclareMathOperator{\tr}{tr}
\newcommand{\Gr}[1]{\bigl[#1\bigr]}                    
\newcommand{\traceover}[1]{\tr_{\raisebox{-2pt}{$\scriptstyle #1$}}} 

\DeclareMathOperator{\chmap}{ch}
\newcommand{\ch}[1]{\chmap \Gr{#1}}                    
\newcommand{\fch}[2]{\ch{#1} \left(#2\right)}          

\newcommand{\bp}{Bershadsky--Polyakov}
\newcommand{\pbw}{Poincar\'{e}--Birkhoff--Witt}

\newcommand{\lhs}{left-hand side}

\newcommand{\rhs}{right-hand side}

\newcommand{\hw}{highest-weight}
\newcommand{\hwv}{\hw\ vector}
\newcommand{\hwvs}{\hwv s}
\newcommand{\hwm}{\hw\ module}
\newcommand{\hwms}{\hwm s}
\newcommand{\rhw}{relaxed highest-weight}
\newcommand{\rhwv}{\rhw\ vector}

\newcommand{\rhwm}{\rhw\ module}
\newcommand{\rhwms}{\rhwm s}
\newcommand{\chw}{conjugate highest-weight}
\newcommand{\chwv}{\chw\ vector}
\newcommand{\chwvs}{\chwv s}
\newcommand{\chwm}{\chw\ module}
\newcommand{\chwms}{\chwm s}
\newcommand{\vo}{vertex operator}
\newcommand{\voa}{\vo\ algebra}
\newcommand{\voas}{\voa s}
\newcommand{\svoa}{\vo\ superalgebra}

\newcommand{\vosa}{\vo\ subalgebra}

\newcommand{\ope}{operator product expansion}
\newcommand{\opes}{\ope s}
\newcommand{\qhr}{quantum hamiltonian reduction} 
\newcommand{\qhrs}{\qhr s}

\makeatletter
\renewcommand\author@andify{%
  \nxandlist {\unskip ,\penalty-1 \space\ignorespaces}%
    {\unskip {} \@@and~}%
    {\unskip \penalty-2 \space \@@and~}%
}
\makeatother

\begin{document}

\title[]{A realisation of the Bershadsky--Polyakov algebras and their relaxed modules}

\author[]{Dra\v zen  Adamovi\' c}
\address[Dra\v zen Adamovi\' c]{Department of Mathematics, Faculty of Science \\
University of Zagreb \\
Bijeni\v cka 30
}
\email{adamovic@math.hr}

\author[]{Kazuya Kawasetsu}
\address[Kazuya Kawasetsu]{
Priority Organization for Innovation and Excellence \\
Kumamoto University \\
Kumamoto 860-8555, Japan.
}
\email{kawasetsu@kumamoto-u.ac.jp}

\author[]{David Ridout}
\address[David Ridout]{
School of Mathematics and Statistics \\
University of Melbourne \\
Parkville, Australia, 3010.
}
\email{david.ridout@unimelb.edu.au}

\begin{abstract}
We present a realisation of the universal/simple \bp\ vertex algebras as subalgebras of the tensor product of the universal/simple Zamolodchikov vertex algebras and an isotropic lattice vertex algebra.  This generalises the realisation of the universal/simple affine vertex algebras associated to $\SLA{sl}{2}$ and $\SLSA{osp}{1}{2}$ given in \cite{AdaRea19}.  Relaxed \hwms\ are likewise constructed, conditions for their irreducibility are established, and their characters are explicitly computed, generalising the character formulae of \cite{KR-2019}.
\end{abstract}

\maketitle


\section{Introduction}

Let $\alg{g}$ be a finite-dimensional complex basic classical simple Lie superalgebra and let $\uaff{k}{\alg{g}}$ denote the corresponding universal affine vertex superalgebra of level $k$.  Associated to every nilpotent element $f \in \alg{g}$, or rather to every orbit of nilpotent elements, there is a vertex superalgebra $\uwalg{k}{\alg{g}}{f}$ called a (universal) W-algebra.  It is defined \cite{FeiAff92,KRW} as the cohomology of the tensor product of $\uaff{k}{\alg{g}}$ and a certain ghost \svoa.  An important problem is to understand the representation theory of $\uwalg{k}{\alg{g}}{f}$ and that of its simple quotient $\swalg{k}{\alg{g}}{f}$.

There are certain cases in which the representation theory of $\swalg{k}{\alg{g}}{f}$ is relatively well-understood.  In particular, $\uwalg{k}{\alg{g}}{0} = \uaff{k}{\alg{g}}$ and so $\swalg{k}{\alg{g}}{0}$ is the simple affine vertex superalgebra $\saff{k}{\alg{g}}$.  For admissible levels, the \hwms\ of the latter were classified in \cite{AraRat16} for $\alg{g}$ a simple Lie algebra.  On the other hand, when $f$ is principal and $k$ is admissible and nondegenerate, the representation theory of $\swalg{k}{\alg{g}}{f}$ was completely determined in \cite{AraRat15}, again for $\alg{g}$ nonsuper.  Other results in this direction may be found in \cite{ArBP,AraRat19}.

Our interest here is in the so-called \rhwms\ \cite{FeiEqu98,RidRel15} that play an important role in the representation theory of certain classes of nonrational non-$C_2$-cofinite vertex superalgebras including the universal W-algebras and many of their simple quotients.  The first classification result of this type addressed the simple \rhwms\ of $\saff{k}{\SLA{sl}{2}}$ for $k$ admissible \cite{AdaVer95}.  Recently, similar classifications for other affine vertex superalgebras have started to appear \cite{AraWei16,RidAdm17,WooAdm18,CreCos18,FutSim20,FutPos20,KawAdm20,CreAdm20}.  Moreover, \cite{KawRel19} explains how one can rigorously derive, for general affine vertex algebras, the relaxed classification from the \hw\ one.  A natural question now is how to obtain relaxed classifications for nonrational non-$C_2$-cofinite W-algebras.

One answer to this question is to use explicit singular vector formulae \cite{AK}.  However, this is limited to a very small subset of W-algebras and levels.  Another is to realise the simple W-algebras using a coset construction, if one is available.  Such constructions are generally very difficult to prove, see \cite{AraWAl19} for example, and are thus far limited to a small class of nilpotents (notably the principal ones).  Nevertheless, coset constructions provide powerful tools to analyse the representation theory of certain W-algebras.  A particularly tractable, but still important, special case concerns cosets by a Heisenberg subalgebra for which there are general tools available \cite{CreSch16}.  This includes, for example, the nonunitary minimal models of the $N=2$ superconformal algebras \cite{CreUni19}.

A somewhat more general approach is to use \qhr\ functors to construct W-algebra modules from affine vertex superalgebra modules, when the latter are well-understood.  More precisely, one can restrict the functor to category $\mathcal{O}$, try to prove that the reduction functor is surjective onto category $\mathcal{O}$ for the W-algebra, and then use the methods of \cite{KawRel19} to extend this to a classification of relaxed modules.

This approach is currently being tested in \cite{FehCla20} for the simplest nonrational non-$C_2$-cofinite W-algebras, the \bp\ algebras $\sbpk = \swalg{k}{\SLA{sl}{3}}{f_{\text{min}}}$ \cite{PolGau90,BerCon91} with $k$ nondegenerate admissible and $f_{\text{min}}$ minimal (take the lowest root vector of $\SLA{sl}{3}$ for definiteness).  This uses a detailed understanding \cite{KRW,KW,AraRep05} of the minimal reduction functor.  However, generalising this approach to other W-algebras will require a far better understanding of the corresponding reduction functors than is currently available.

Here, we study the representation theory of the \bp\ algebras using a promising alternative approach that has the benefit of constructing the \rhwms\ directly, by ``inverting'' the \qhr\ functor (see \cite{SemInv94}).  This method was pioneered in \cite{AdaRea19} for $\alg{g} = \SLA{sl}{2}$ and $\SLSA{osp}{1}{2}$.  In particular, the simple affine vertex algebra $\saff{k}{\SLA{sl}{2}}$ was, for $k \notin \N$, there realised as a subalgebra of the tensor product of a simple Virasoro vertex algebra $L^{\text{Vir}}_c$ and a lattice vertex algebra $\lvoa$ of indefinite type.  Similarly, $\saff{k}{\SLSA{osp}{1}{2}}$ was realised in the tensor product of a simple $N=1$ superconformal vertex algebra $L^{N=1}_c$, a free fermion and another lattice vertex algebra (closely related to $\lvoa$).

Moreover, the known irreducible $\saff{k}{\SLA{sl}{2}}$-modules were constructed in \cite{AdaRea19} as submodules of the tensor product of an irreducible $L^{\text{Vir}}_c$-module $M$ and an irreducible $\lvoa$-module $\lmod{r}{\lambda}$.  An especially nice observation is that the irreducible \rhwms\ were realised directly as $M \otimes \lmod{-1}{\lambda}$, where $M$ is some irreducible \hw\ $L^{\text{Vir}}_c$-module.  This neatly explains why the characters of these relaxed modules, proposed in \cite{CreMod13} and proven in \cite{KR-2019}, have the well-known irreducible Virasoro characters as factors.  Analogous realisations for irreducible $\saff{k}{\SLSA{osp}{1}{2}}$-modules at arbitrary levels also appear in \cite{AdaRea19}, again explaining the factorisations of their characters \cite{KR-2019}.

Of course, $L^{\text{Vir}}_c$ and $L^{N=1}_c$ are the principal \qhrs\ $\swalg{k}{\SLA{sl}{2}}{f_{\text{pr}}}$ and $\swalg{k}{\SLSA{osp}{1}{2}}{f_{\text{pr}}}$, respectively.  It is in this sense that tensoring with an appropriate \svoa\ inverts the reduction functor.  In this paper, we extend the results of \cite{AdaRea19} to the \bp\ algebras $\ubpk$ and $\sbpk$.  The role of the \qhrs\ will be played by the Zamolodchikov algebra $\uzamk = \uwalg{k}{\SLA{sl}{3}}{f_{\text{pr}}}$ and its simple quotient $\szamk$ \cite{ZamInf85}.  Our results may therefore be regarded as not inverting the principal (or minimal) reduction functor, but rather as inverting an (as yet undefined) affine version of the \qhr\ by stages functors introduced (for type A) in \cite{MorQua15}.

Recall the lattice vertex algebra $\lvoa$ and its irreducible modules $\lmod{-1}{\lambda}$ (see \cref{sec:lvoa} for precise definitions).  We shall prove the following results.
\begin{itemize}
	\item For all $k$, the universal \bp\ algebra $\ubpk$ is a vertex subalgebra of $\uzamk \otimes \lvoa$ (\cref{thm:realisation,thm:critrealisation}).
	\item For all $k$ such that $2k+3 \notin \N \cup \{-3\}$, the simple \bp\ algebra $\sbpk$ is a vertex subalgebra of $\szamk \otimes \lvoa$ (\cref{thm:simpleembed}).
	\item If $M$ is an irreducible \hw\ $\uzamk$-module, then $M \otimes \lmod{-1}{\lambda}$ is an indecomposable \rhw\ $\ubpk$-module that is irreducible for almost all $\lambda$ (\cref{thm:rmodirred}).
	\item If $2k+3 \notin \N \cup \{-3\}$, then the previous assertion holds with $\uzamk$ and $\ubpk$ replaced by $\szamk$ and $\sbpk$ (\cref{thm:rmodirred-simple}).
	\item Every nonordinary irreducible (conjugate) \hw\ $\ubpk$-module may be constructed as an explicitly given submodule of some $M \otimes \lmod{-1}{\lambda}$ (\cref{prop:allirredhw}).
\end{itemize}

In this paper, we will not delve deeper into the representations of the \bp\ algebras, leaving logarithmic (staggered) modules and Whittaker modules for a sequel.  We will also not prove that our relaxed construction produces all the \rhw\ $\sbpk$-modules, up to isomorphism, noting only that this follows by comparing with the classification results of \cite{FehCla20}.  Nevertheless, it would be very satisfying to prove this completeness using the framework developed here and we intend to also address this in the sequel.

Our success in generalising the results of \cite{AdaRea19} not only lends weight to the conjectural existence of affine reduction by stages functors, it also suggests a general program for elucidating the representation theory of a given (simple) W-algebra $\swalg{k}{\alg{g}}{f}$.  We shall describe this program, initially assuming that $k$ is nondegenerate admissible for simplicity.  The principal W-algebra $\swalg{k}{\alg{g}}{f_{\text{pr}}}$ is then rational \cite{AraRat15} and its representation theory is, in principle, known.  Nilpotent orbits admit a natural partial ordering via inclusions of their closures with the principal orbit being the largest and the zero orbit the smallest.  The program then amounts to iteratively inverting the affine reduction by stages functors to construct the representations of $\swalg{k}{\alg{g}}{f}$ from those of $\swalg{k}{\alg{g}}{f_{\text{pr}}}$.

For $\alg{g} = \SLA{sl}{3}$, the poset of nilpotent orbits is totally ordered, with the minimal orbit lying between the zero and principal ones.  Following the program described above means choosing a nondegenerate admissible level $k$ and using the known representation theory of the rational \voa\ $\szamk = \swalg{k}{\SLA{sl}{3}}{f_{\text{pr}}}$ to construct the representations of $\sbpk = \swalg{k}{\SLA{sl}{3}}{f_{\text{min}}}$.  This is the content of this paper.  Continuing this program, we should next attempt to use these results to construct the representation theory of $\saff{k}{\SLA{sl}{3}} = \swalg{k}{\SLA{sl}{3}}{0}$.  We intend to return to this in the future \cite{ACG}, comparing with the results of the complementary approach of \cite{AraRat16,KawRel19}.

For degenerate admissible levels, we expect that the privileged role of the principal W-algebra as the starting point of the program will be replaced by the simple exceptional W-algebras of \cite{KacRat08,ElaExc09,AraAss15}, many of which have been recently proven to be rational \cite{AraRat19}.  For example, if $\alg{g} = \SLA{sl}{3}$ and $k \in -\frac{3}{2} + \N$, then the principal reduction of $\saff{k}{\SLA{sl}{3}}$ is zero and the exceptional W-algebra is $\sbpk$.  The latter is rational \cite{ArBP} and so our program begins here.  If instead $k \in \N$, then both the principal and minimal reductions of $\saff{k}{\SLA{sl}{3}}$ are zero and the exceptional W-algebra is $\saff{k}{\SLA{sl}{3}}$ itself (which is also rational).

\bigskip

We conclude this introduction with a brief outline of the contents of the paper.  In \cref{sec:bp,sec:realisation}, we first introduce our conventions for the \bp\ algebras, as well as the Zamolodchikov algebras and the lattice vertex algebra $\lvoa$.  We then verify that we have a homomorphism from the universal \bp\ algebra to the tensor product of the universal Zamolodchikov algebra and $\lvoa$ in \cref{sec:realisation'}.  The fact that this is an embedding is proven in \cref{sec:injective}.

We next construct \rhwms\ for the \bp\ algebras, arguing in \cref{sec:almost} that these modules are ``almost-irreducible'' which means, among other things, that for almost all values of the parameters that naturally specify these modules, they are irreducible.  We also give a precise criterion for irreducibility.  This allows us to realise the simple \bp\ algebra as a submodule of such a relaxed module in \cref{sec:simple} when the level satisfies $2k+3 \notin \N$, thereby proving the simple analogue of the embedding of \cref{sec:realisation',sec:injective} for these levels.  Finally, \cref{sec:critlev} establishes critical-level analogues of these results.

\subsection*{Notation}

Given a homogeneous field $A(z)$ of conformal weight $\Delta_A$, we define operators $A_n$ and $A_{(n)}$, $n\in\Z$, by the expansions
\begin{equation} \label{eq:expansionconventions}
	A(z)=\sum_{n\in\Z-\Delta_A} A_n z^{-n-\Delta_A} = \sum_{n\in\Z} A_{(n)}z^{-n-1}.
\end{equation}

\subsection*{Acknowledgements}

We thank Thomas Creutzig and Zac Fehily for discussions relating to the results presented here.

D.A. is  partially supported   by the
QuantiXLie Centre of Excellence, a project cofinanced
by the Croatian Government and European Union
through the European Regional Development Fund - the
Competitiveness and Cohesion Operational Programme
(KK.01.1.1.01.0004).

KK's research is partially supported by
MEXT Japan ``Leading Initiative for Excellent Young Researchers (LEADER)'',
JSPS Kakenhi Grant numbers 19KK0065 and 19J01093 and
Australian Research Council Discovery Project DP160101520.

DR's research is supported by the Australian Research Council Discovery Project DP160101520 and the Australian Research Council Centre of Excellence for Mathematical and Statistical Frontiers CE140100049.

\section{\bp\ algebras} \label{sec:bp}

In this \lcnamecref{sec:bp}, we introduce the \bp\ algebras as vertex algebras.  They were originally defined independently by Polyakov \cite{PolGau90} and Bershadsky \cite{BerCon91} as nonprincipal \qhrs\ of the universal affine vertex algebras $\uslk$ associated to $\SLA{sl}{3}$.  In the framework of Kac--Roan--Wakimoto \cite{KRW}, they are simultaneously the minimal and subregular reductions.

We start by defining the universal \bp\ \voas\ $\ubpk$ in terms of generators and relations.  The \bp\ vertex algebra at the critical level $k=-3$ will be studied in \cref{sec:critlev}.
\begin{definition} \label{def:bp}
	For $k \ne -3$, the universal \bp\ \voa\ $\ubpk$ is the universal vertex algebra generated by fields $L$, $J$, $G^+$ and $G^-$ subject to the following \opes:
	\begin{equation} \label{ope:bp}
		\begin{gathered}
			J(z)J(w)\sim\frac{2k+3}{3(z-w)^2}, \qquad
			J(z)G^{\pm}(w)\sim\pm \frac{G^{\pm}(w)}{z-w}, \\
			L(z)G^+(w)\sim\frac{G^+(w)}{(z-w)^2} + \frac{\pd G^+(w)}{z-w}, \qquad
			L(z)G^-(w)\sim\frac{2G^-(w)}{(z-w)^2} + \frac{\pd G^-(w)}{z-w}, \\
			L(z)J(w)\sim -\frac{2k+3}{3(z-w)^3} + \frac{J(w)}{(z-w)^2} + \frac{\pd J(w)}{z-w}, \qquad
			G^{\pm}(z)G^{\pm}(w)\sim0, \\
			L(z)L(w)\sim \frac{c^{\bpsymb}_k}{2(z-w)^4}+\frac{2L(w)}{(z-w)^2}+\frac{\pd L(w)}{z-w}, \\
			\begin{aligned}
				G^+(z)G^-(w) &\sim \frac{(k+1)(2k+3)}{(z-w)^3}+\frac{3(k+1)J(w)}{(z-w)^2} \\
				&\quad+\frac{3\no{J(w)J(w)} + (2k+3) \pd J(w) - (k+3) L(w)}{z-w}.
			\end{aligned}
		\end{gathered}
	\end{equation}
	The central charge is
	\begin{equation}
		c^{\bpsymb}_k = -\frac{4(k+1)(2k+3)}{k+3}.
	\end{equation}
	As always, $\ubpk$ has a unique simple quotient and we shall denote it by $\sbpk$.
\end{definition}

\begin{remark}
	Both $\ubpk$ and $\sbpk$ are $\N$-graded by the eigenvalue of the zero mode $L_0$ because the conformal weights of $G^{\pm}(z)$ are $1$ and $2$, respectively.  This asymmetry is also reflected in the fact that $J(z)$ fails to be quasiprimary.  This failure may be rectified by instead choosing the conformal vector to be
\begin{equation}
	\widetilde{L} = L - \frac{1}{2} \pd J.
\end{equation}
Both $G^+$ and $G^-$ will then have conformal weight $\frac{3}{2}$.  The only downside is that $\ubpk$ and $\sbpk$ are now $\frac{1}{2} \N$-graded by the eigenvalue of $\widetilde{L}_0$.  Unless otherwise indicated, we shall keep $L$ as the conformal vector.
\end{remark}

The commutation relations of the modes are easily computed from the \opes\ \eqref{ope:bp}.  We record them for convenience.
\begin{equation} \label{cr:bp}
	\begin{gathered}
		[J_m,J_n] = \frac{2k+3}{3}m \delta_{m+n,0}, \qquad
		[J_m,G^\pm_n] = \pm G^\pm_{m+n}, \\
    [L_m,G^+_n] = -n G^+_{m+n}, \qquad
    [L_m,G^-_n] = (m-n) G^-_{m+n}, \\
    [L_m,J_n] = -nJ_{m+n} - \frac{2k+3}{3} \frac{m^2-m}{2} \delta_{m+n,0}, \qquad
		[G_m^\pm,G_n^\pm] = 0, \\
		[L_m,L_n] = (m-n)L_{m+n} - \frac{(2k+3)(k+1)}{k+3} \frac{m^3-m}{3} \delta_{m+n,0}, \\
    \begin{aligned}
			[G_m^+,G_n^-] &= 3 \no{JJ}_{m+n} - (k+3)L_{m+n} + \bigl( km-(2k+3)(n+1) \bigr) J_{m+n} \\
	    &\quad+ (k+1)(2k+3) \frac{m^2-m}{2} \delta_{m+n,0}.
		\end{aligned}
	\end{gathered}
\end{equation}

The associative algebra of modes specified by these relations admits a useful family of automorphisms called \emph{spectral flow automorphisms}.  These may be lifted to maps on $\ubpk$, following \cite{LiPhy97}, by introducing
\begin{equation} \label{eq:Li}
	\Lambda(\ell J,z) = z^{-\ell J_0} \prod_{n=1}^{\infty} \exp \left( \frac{(-1)^n}{n} \ell J_n z^{-n} \right)
\end{equation}
and defining the result of acting with the spectral flow map $\bpsfsymb^{\ell}$, $\ell \in \Z$, on a field $A(z)$ of $\ubpk$ to be
\begin{equation}
	\bpsfsymb^{\ell}(A(z)) = Y \bigl( \Lambda(\ell J,z) A,z \bigr),
\end{equation}
where $Y$ is the vertex map of $\ubpk$.  In particular, we have
\begin{equation} \label{eq:bpsf}
	\begin{gathered}
		\bpsfsymb^{\ell}(G^{\pm}(z)) = z^{\mp \ell} G^{\pm}(z), \qquad
		\bpsfsymb^{\ell}(J(z)) = J(z) - \frac{2k+3}{3} \ell z^{-1}, \\
		\bpsfsymb^{\ell}(L(z)) = L(z) - \ell z^{-1} J(z) + \frac{2k+3}{3} \frac{\ell(\ell+1)}{2} z^{-2}.
	\end{gathered}
\end{equation}
One can check explicitly that spectral flow preserves the defining \opes\ \eqref{ope:bp} (and the vacuum) of $\ubpk$.  These spectral flows may, moreover, be extended to $\ell \in \frac{1}{2} \Z$ if we allow half-integer modes for $G^{\pm}(z)$ as when acting on twisted $\ubpk$-modules.

Let $M$ be a $\ubpk$-module.  By twisting the action of $\ubpk$ on $M$ by the spectral flow map $\bpsfsymb^{-\ell}$, we may give $M$ a new structure as a $\ubpk$-module.  We shall denote this new $\ubpk$-module by $\bpsfsymb^{\ell}(M)$.  Denoting its elements by $\bpsfsymb^{\ell}(v)$, where $v \in M$, the twisted action is explicitly realised as
\begin{equation} \label{eq:sfmod}
	A \cdot \bpsfsymb^{\ell}(v) = \bpsfsymb^{\ell} \left( \bpsfsymb^{-\ell}(A) v \right).
\end{equation}
In this way, spectral flow lifts to invertible endofunctors on the category of $\ubpk$-modules.  Consequently, $\bpsfsymb^{\ell}(M)$ is irreducible if and only if $M$ is.

\section{Realisation of $\ubpk$} \label{sec:realisation}

Here, we realise the universal \bp\ algebra $\ubpk$ as a vertex subalgebra of the tensor product of the principal \qhr\ of $\uslk$, which we shall refer to as the Zamolodchikov algebra, and an isotropic lattice vertex algebra $\lvoa$.  We first introduce these vertex algebras and some of their modules.

\subsection{The Zamolodchikov algebras $\uzamk$ and $\szamk$} \label{W23}

As with the universal \bp\ algebras, the universal Zamolodchikov algebras \cite{ZamInf85} may also be defined in terms of generators and relations.
\begin{definition}
	For $k \ne -3$, the universal Zamolodchikov \voa\ $\uzamk$ is the universal vertex algebra generated by fields $T$ and $W$ subject to the following \opes:
	\begin{equation} \label{ope:z}
		\begin{gathered}
			T(z)T(w)\sim \frac{c^{\zamsymb}_k}{2(z-w)^4} + \frac{2T(w)}{(z-w)^2} + \frac{\pd T(w)}{z-w}, \\
			T(z)W(w)\sim \frac{3W(w)}{(z-w)^2} + \frac{\pd W(w)}{z-w}, \\
			\begin{aligned}
				W(z)W(w) &\sim \frac{(k+3)^3}{3} \left[ \frac{2\Lambda(w)}{(z-w)^2} + \frac{\pd \Lambda(w)}{z-w} \right] \\
				&\mspace{-50mu} + A \left[ \frac{c^{\zamsymb}_k}{3(z-w)^6} + \frac{2T(w)}{(z-w)^4} + \frac{\pd T(w)}{(z-w)^3} + \frac{\frac{3}{10} \pd^2 T(w)}{(z-w)^2} + \frac{\frac{1}{15} \pd^3 T(w)}{z-w} \right].
			\end{aligned}
		\end{gathered}
	\end{equation}
	Here, the central charge is
	\begin{equation}
		c^{\zamsymb}_k = -\frac{2(3k+5)(4k+9)}{k+3}
	\end{equation}
	and we have defined
	\begin{equation}
		A = -\frac{(k+3)^2(3k+4)(5k+12)}{6} \quad \text{and} \quad
		\Lambda = \no{TT} - \frac{3}{10} \pd^2 T,
	\end{equation}
	for convenience.  The unique simple quotient of $\uzamk$ will be denoted by $\szamk$.
\end{definition}

\begin{remark}
	The definition of $\uzamk$ given above differs from the standard one in that we have renormalised the field $W(z)$ by a factor of $\sqrt{A}$.  This removes a singularity in the standard definition when $c^{\zamsymb}_k = -\frac{22}{5}$, hence $k=-\frac{4}{3}$ or $-\frac{12}{5}$.  At this central charge, $W$ and $\Lambda$ belong to the maximal ideal of $\uzam{-4/3} = \uzam{-12/5}$.  The simple quotient is in fact the Virasoro minimal model $M(2,5)$ (also known as the Yang--Lee model).
\end{remark}

\subsection{The vertex algebra $\lvoa$ and its modules} \label{sec:lvoa}

Consider the abelian Lie algebra $\alg{h} = \spn_{\C}\{a,b\}$, equipped with the bilinear form defined by
\begin{equation}
	\bilin{a}{a} = - \bilin{b}{b} = 1 \quad \text{and} \quad \bilin{a}{b} = 0.
\end{equation}
For convenience, we let
\begin{equation}
	c=a-b \quad \text{and} \quad d=a+b,
\end{equation}
noting that these elements of $\alg{h}$ are isotropic: $\bilin{c}{c} = \bilin{d}{d}  = 0$.  The group algebra $\C[\Z c] = \spn_{\C} \{e^{nc} \suchthat n \in \Z\}$ then becomes an $\alg{h}$-module with action
\begin{equation}
	h e^{nc} = \bilin{h}{c} e^{nc}, \quad h \in \alg{h}.
\end{equation}
Let $\heis$ denote the Heisenberg vertex algebra associated to $\alg{h}$.
\begin{definition}
	Let $\lvoa$ denote the lattice vertex algebra $\heis \otimes \C[\Z c]$, where the action of $h \in \alg{h}$ on $\C[\Z c]$ is identified with the action of the zero mode $h_0$ of $h(z) \in \heis$.  We equip $\lvoa$ with the conformal structure given by
	\begin{equation}
		t(z) = \frac{1}{2} \no{c(z) d(z)} + \frac{2k+3}{3} \pd c(z) - \frac{1}{2} \pd d(z),
	\end{equation}
	so that $a$ and $b$ both have conformal weight $1$, whilst the weight of $e^{nc}$, $n \in \Z$, is $n$.  The central charge is
	\begin{equation}
		c^{\lvoa}_k = 2 + 8(2k+3).
	\end{equation}
\end{definition}

Vertex algebras like $\lvoa$ were studied in \cite{BerRep02}, under the name ``half-lattice vertex algebras'', as were their representation theories.  We refer to \cite[Sec.~4]{AdaRea19} for a convenient summary.  Here, we record the following \ope\ for future convenience:
\begin{equation} \label{eq:lvoaope}
	e^c(z) e^{-c}(w) = \vac + c(w) (z-w) + \frac{1}{2} \bigl( \no{c(w)c(w)} + \pd c(w) \bigr) (z-w)^2 + \cdots.
\end{equation}

Before introducing the $\lvoa$-modules relevant to what follows, we discuss spectral flow for $\lvoa$.  In analogy with \eqref{eq:Li}, we set \cite{LiPhy97}
\begin{equation} \label{eq:defj}
	\Lambda(\ell j,z) = z^{-\ell j_0} \prod_{n=1}^{\infty} \exp \left( \frac{(-1)^n}{n} \ell j_n z^{-n} \right), \quad j = b + \frac{k+3}{3} c,
\end{equation}
and define the action of the spectral flow map $\lsfsymb^{\ell}$, $\ell \in \Z$, on a field $A(z)$ of $\lvoa$ by
\begin{equation}
	\lsfsymb^{\ell}(A(z)) = Y(\Lambda(\ell j,z) A,z),
\end{equation}
where $Y$ is now the vertex map of $\lvoa$.  The reason for taking the particular element $j \in \alg{h}$ will become clear in \cref{sec:realisation'}.

With this, it is easy to verify that spectral flow acts on the generators of $\lvoa$ as follows:
\begin{equation} \label{eq:lvoasf}
	\begin{aligned}
		\lsfsymb^{\ell}(a(z)) &= a(z) - \tfrac{k+3}{3} \ell   z^{-1}, \\
		\lsfsymb^{\ell}(b(z)) &= b(z) - \tfrac{k}{3} \ell z^{-1},
	\end{aligned}
	\quad	\lsfsymb^{\ell}(e^{nc}(z)) = z^{-\ell n} e^{nc}(z) \quad \text{($n \in \Z$)}.
\end{equation}
The $\lsfsymb^{\ell}$ thus preserve \opes\ (and the vacuum) of $\lvoa$, but do not preserve its conformal structure:
\begin{equation}
	\lsfsymb^{\ell}(t(z)) = t(z) - \frac{1}{2} \ell z^{-1} j(z) + \frac{2k+3}{3} \frac{\ell(\ell+1)}{2} z^{-2}.
\end{equation}

As in \cref{sec:bp}, twisting by spectral flow defines invertible functors on the category of $\lvoa$-modules.  This therefore associates to every $\ell \in \Z$ and every (irreducible) $\lvoa$-module $M$ a new (irreducible) $\lvoa$-module $\lsfsymb^{\ell}(M)$.  Again, this generalises to $\ell \in \frac{1}{2} \Z$ and twisted modules.

The $\lvoa$-modules of interest here may be obtained by considering the $\C[\Z c]$-module generated by $e^h \in \C[\alg{h}]$ and inducing.  For later convenience, we shall generally write $h$ as a linear combination of the basis vectors $j$, defined in \eqref{eq:defj}, and $c$.  It will also be convenient to introduce
\begin{equation} \label{eq:defi}
	i = d-j = a - \frac{k+3}{3} c.
\end{equation}
We then have $\bilin{j}{j} = \frac{2k+3}{3} = -\bilin{i}{i}$, $\bilin{j}{c} = 1 = \bilin{i}{c}$ and $\bilin{i}{j} = 0$.

For each $r \in \frac{1}{2} \Z$ and $\lambda \in \C$, define
\begin{equation}
	\lmod{r}{\lambda} = \lvoa \cdot e^{rj+\lambda c}.
\end{equation}
Recall that an indecomposable module is positive-energy, with respect to a given conformal structure, if its conformal weights are bounded below.  Its top space is then the eigenspace corresponding to the minimal conformal weight (should it exist).

The irreducible weight $\lvoa$--modules were constructed and classified in \cite{BerRep02}. In particular, this work implies that $\lmod{r}{\lambda}$ is an irreducible $\lvoa$-module for $r \in \Z$. One can modify this result and show that $\lmod{r}{\lambda}$ is an irreducible twisted $\lvoa$-module for $r \in \Z+\frac{1}{2}$.  Thus we have (see also \textup{\cite[Prop.~4.1]{AdaRea19}}):
\begin{proposition} \label{prop:latmods}
	\leavevmode
 	\begin{itemize}
		\item For $r \in \Z$ and $\lambda \in \C$, $\lmod{r}{\lambda}$ is an irreducible (untwisted) $\Z$-graded $\lvoa$-module.
		\item For $r \in \Z+\frac{1}{2}$ and $\lambda \in \C$, $\lmod{r}{\lambda}$ is an irreducible $\frac{1}{2} \Z$-graded ($e^{\pi \sqrt{-1} i_0}$-twisted) $\lvoa$-module.
		\item In both cases, we have $\lmod{r}{\lambda} \cong \lmod{r}{\lambda+n}$ for all $n \in \Z$.  Otherwise, the $\lmod{r}{\lambda}$ are mutually inequivalent.
		\item $c_0$ acts on $\lmod{r}{\lambda}$ as $r$ times the identity and the $e^{nc}_{-n(r+1)}$, $n \in \Z$, act injectively.
		\item $\lmod{r}{\lambda}$ is positive-energy if and only if $r=-1$.  $\lmod{-1}{\lambda}$ is thus a \rhw\ $\lvoa$-module and its top space $\tp{\lmod{-1}{\lambda}}$ has conformal weight $\frac{2k+3}{3}$.
		\item Twisting the action of $\lvoa$ by the spectral flow maps $\lsfsymb^{\ell}$ gives (see \textup{\cite[Prop.~4.1]{AdP}} for a similar calculation)
		\begin{equation} \label{eq:sf}
			\lsfsymb^{\ell}(\lmod{r}{\lambda}) \cong \lmod{r+\ell}{\lambda}.
		\end{equation}
	\end{itemize}
\end{proposition}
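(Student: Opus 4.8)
The plan is to reduce every assertion to the classification of irreducible (possibly twisted) weight modules over the half-lattice vertex algebra $\lvoa$ obtained in \cite{BerRep02} (conveniently summarised in \cite[Sec.~4]{AdaRea19}), supplemented by a single explicit computation of the conformal grading. Every module in that classification is, as a module for the Heisenberg subalgebra $\heis$ together with the lattice operators $e^{nc}$, a direct sum $\bigoplus_{n\in\Z} F_{\mu+nc}$ of Fock spaces glued by the $e^{nc}$, and the first step is to match $\lmod{r}{\lambda}=\lvoa\cdot e^{rj+\lambda c}$ with the one built from $\mu=rj+\lambda c$. Since $\Z c$ is isotropic, the field $e^{c}(z)$ has an integer-moded expansion on $\lmod{r}{\lambda}$ precisely when $r\in\Z$, so that $\lmod{r}{\lambda}$ is then an untwisted $\lvoa$-module, and a half-integer-moded expansion when $r\in\Z+\tfrac12$, so that it is then an $e^{\pi\sqrt{-1}i_0}$-twisted module (the relevant automorphism being forced by $\bilin{i}{c}=1$); this also pins down the $\Z$- versus $\tfrac12\Z$-grading. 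The delicate point here, and the one I expect to require the most care, is verifying that the ``modification'' of the \cite{BerRep02} classification to the twisted ($r\in\Z+\tfrac12$) setting really yields irreducible rather than merely indecomposable modules; I would establish this by running the same Fock-space argument that handles the untwisted case.

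The next step is the conformal grading. Using $t(z)=\tfrac12\no{c(z)d(z)}+\tfrac{2k+3}{3}\pd c(z)-\tfrac12\pd d(z)$ together with $\bilin{c}{rj+\lambda c}=r$, $\bilin{d}{rj+\lambda c}=\tfrac{(2k+3)r}{3}+2\lambda$ and $\bilin{rj+\lambda c}{rj+\lambda c}=\tfrac{(2k+3)r^2}{3}+2r\lambda$, one computes
\[
	t_0\,e^{rj+(\lambda+n)c}=\Bigl(\frac{(2k+3)r(r-1)}{6}+(r+1)(\lambda+n)\Bigr)e^{rj+(\lambda+n)c},\qquad n\in\Z,
\]
and all remaining states of $\lmod{r}{\lambda}$ arise by applying $\heis$-modes that raise the conformal weight by positive integers. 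Hence the conformal weights of $\lmod{r}{\lambda}$ are bounded below if and only if $r+1=0$, which is the positive-energy dichotomy; and when $r=-1$ the displayed eigenvalue collapses to $\tfrac{2k+3}{3}$ independently of $\lambda$ and $n$, so $\tp{\lmod{-1}{\lambda}}=\spn_\C\{e^{-j+(\lambda+n)c}:n\in\Z\}$ with that conformal weight. Reading off the action on this top space — the positive Heisenberg modes and the positive modes of $Y(e^{\pm c},z)$ annihilate it, while $j_0$, $i_0=\lambda\,\id$, $c_0=-\id$ and $e^{\pm c}_0$ act on it irreducibly — shows that $\lmod{-1}{\lambda}$ is relaxed, rather than ordinary, highest-weight.

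What remains is bookkeeping. Because $\bilin{c}{c}=0$, $c_0$ commutes with $\heis$ and with every mode of every $e^{nc}$ and acts as $\bilin{c}{rj+\lambda c}=r$ on the generator, so $c_0=r\,\id$ on all of $\lmod{r}{\lambda}$. By \eqref{eq:lvoaope} and its translates, the mode $e^{nc}_{-n(r+1)}$ is the leading mode of $Y(e^{nc},z)$ and carries the vacuum of $F_{rj+mc}$ to that of $F_{rj+(m+n)c}$ up to a nonzero cocycle scalar, hence acts injectively on the (semisimple) sum of Fock spaces, as in \cite[Prop.~4.1]{AdaRea19}; taking $n=\pm1$ and iterating gives $\lmod{r}{\lambda}\cong\lmod{r}{\lambda+1}\cong\cdots$, while distinct $\lmod{r}{\lambda}$ are separated by the spectrum of the Heisenberg zero modes (equivalently by the \cite{BerRep02} classification), which recovers $(r,\lambda\bmod\Z)$. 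Finally, for the spectral-flow isomorphism I would combine the twist formula for $\lvoa$-modules, the analogue of \eqref{eq:sfmod}, with the consequences $\lsfsymb^{-\ell}(j(z))=j(z)+\tfrac{2k+3}{3}\ell z^{-1}$, $\lsfsymb^{-\ell}(c(z))=c(z)+\ell z^{-1}$, $\lsfsymb^{-\ell}(i(z))=i(z)$ and $\lsfsymb^{-\ell}(e^{nc}(z))=z^{\ell n}e^{nc}(z)$ of \eqref{eq:lvoasf}: the vector $\lsfsymb^{\ell}(e^{rj+\lambda c})$ then has Heisenberg weight $(r+\ell)j+\lambda c$ and generates $\lsfsymb^{\ell}(\lmod{r}{\lambda})$, which is irreducible and (as $\ell\in\Z$) lies in the same twist sector, so it must be $\lmod{r+\ell}{\lambda}$. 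Apart from the twisted-irreducibility point flagged above, everything here is routine Heisenberg and lattice computation.
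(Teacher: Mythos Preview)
Your proposal is correct and follows essentially the same approach as the paper: the paper does not give a self-contained proof but simply invokes the classification of irreducible weight $\lvoa$-modules from \cite{BerRep02} (and \cite[Prop.~4.1]{AdaRea19}), notes that the twisted case is a routine modification, and leaves the conformal-weight and spectral-flow computations implicit. Your write-up is a fleshed-out version of exactly this, with the explicit $t_0$-eigenvalue and spectral-flow calculations supplied; the twisted-irreducibility point you flag is precisely what the paper glosses over with ``one can modify this result''.
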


Here, a \emph{\rhw\ $\Pi$-module} is a module that is generated by a single \rhwv, the latter being a weight vector that is annihilated by the $h_n$, with $h \in \alg{h}$ and $n>0$, and the $e^{mc}_n$ with $m \in \Z$ and $n>0$.  Note that the parameter $r$ in $\lmod{r}{\lambda}$ is a spectral flow index while $\lambda$ represents the eigenvalue of $i_0$.  The isomorphism class of $\lmod{r}{\lambda}$ thus only depends on $r$ and the image of $\lambda$ in $\C/\Z$.  Obviously, the vacuum module is $\lmod{0}{0}$.

It is also straightforward to determine the characters of the \rhwms\ $\lmod{-1}{\lambda}$.
\begin{proposition} \label{prop:latchars}
	Let $\delta(z) = \sum_{n \in \Z} z^n$ as usual.  Then, the character of $\lmod{-1}{\lambda}$ is
	\begin{equation}
		\fch{\lmod{-1}{\lambda}}{y,z;q} = \traceover{\lmod{-1}{\lambda}} y^{c_0} z^{i_0} q^{t_0 - c^{\lvoa}/24} = \frac{y^{-1} z^{\lambda}}{\eta(q)^2} \delta(z).
	\end{equation}
\end{proposition}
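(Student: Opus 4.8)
The plan is to read the graded vector-space structure of $\lmod{-1}{\lambda}$ off its construction and then reduce the character to the standard rank-two Heisenberg Fock-space identity, with the central charge doing the rest.

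First I would record that, as a graded vector space, $\lmod{-1}{\lambda} = \lvoa \cdot e^{-j + \lambda c}$ decomposes as $\bigoplus_{n \in \Z} F_n$, where $F_n$ is the $\heis$-Fock module generated by $e^{-j + (\lambda + n)c}$. This is immediate from the half-lattice module theory of \cite{BerRep02} summarised in \cref{prop:latmods}: acting on the cyclic vector by the modes $e^{mc}_p$ shifts the $\C[\Z c]$-label by $mc$, the negative Heisenberg modes $h_p$ ($h \in \alg{h}$) build the Fock descendants, and these are linearly independent and exhaust $\lmod{-1}{\lambda}$. By \cref{prop:latmods} the generator of each $F_n$ has conformal weight $\tfrac{2k+3}{3}$ — crucially the same value for every $n$ — so $F_n$ is positive-energy, its top space is $\C e^{-j+(\lambda+n)c}$, and $t_0$ acts on it with eigenvalues $\tfrac{2k+3}{3} + N$, $N \in \N$, each occurring with the multiplicity of the $N$-th coefficient of $\prod_{m\ge1}(1-q^m)^{-2}$.

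Next comes the bookkeeping for the three commuting gradings. Because $\alg{h}$ is abelian, $c_0$ and $i_0$ are scalar on each $F_n$, equal to $\bilin{c}{-j+(\lambda+n)c} = -1$ and $\bilin{i}{-j+(\lambda+n)c} = \lambda + n$ respectively. Hence
\begin{equation*}
	\traceover{F_n} y^{c_0} z^{i_0} q^{t_0} = y^{-1} z^{\lambda+n} q^{(2k+3)/3} \prod_{m \ge 1} \frac{1}{(1-q^m)^2},
\end{equation*}
and summing over $n \in \Z$ gives $\fch{\lmod{-1}{\lambda}}{y,z;q} = y^{-1} z^{\lambda} \bigl(\sum_{n\in\Z} z^n\bigr) q^{(2k+3)/3 - c^{\lvoa}_k/24}\prod_{m\ge1}(1-q^m)^{-2}$.

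Finally I would simplify: since $c^{\lvoa}_k = 2 + 8(2k+3)$ we have $\tfrac{2k+3}{3} - \tfrac{c^{\lvoa}_k}{24} = -\tfrac{1}{12}$, so $q^{(2k+3)/3 - c^{\lvoa}_k/24}\prod_{m\ge1}(1-q^m)^{-2} = q^{-1/12}\prod_{m\ge1}(1-q^m)^{-2} = \eta(q)^{-2}$; together with $\sum_{n\in\Z} z^n = \delta(z)$ this is exactly $\frac{y^{-1}z^{\lambda}}{\eta(q)^2}\delta(z)$. The only non-routine point is the opening step — certifying that $\lmod{-1}{\lambda}$ genuinely is a direct sum of honest Fock modules with one-dimensional tops sitting in $i_0$-weights $\lambda + n$ — and this is precisely what \cref{prop:latmods} (via \cite{BerRep02}) provides; everything afterwards is the Heisenberg character identity and arithmetic with the central charge.
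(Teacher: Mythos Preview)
Your proof is correct; the paper itself omits the argument entirely, remarking only that it is ``straightforward'' to determine these characters. Your decomposition into Heisenberg Fock modules indexed by the $i_0$-eigenvalue, followed by the rank-two Fock character and the central-charge arithmetic, is exactly the natural elaboration of that remark.
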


\subsection{Realisation} \label{sec:realisation'}

The (easily verified) central charge relation $c^{\bpsymb}_k = c^{\zamsymb}_k + c^{\lvoa}_k$ suggests that the three \voas\ $\ubpk$, $\uzamk$ and $\lvoa$ might be related.  The following result determines this relation precisely.
\begin{theorem} \label{thm:realisation}
	For $k \ne -3$, there is an \emph{injective} \voa\ homomorphism $\phi^k \colon \ubpk \to \uzamk \otimes \lvoa$, uniquely determined by
	\begin{equation} \label{eq:realisation}
		\begin{gathered}
			G^+ \mapsto \vac \otimes e^c, \qquad J \mapsto \vac \otimes j, \qquad L \mapsto T \otimes \vac + \vac \otimes t, \\
			\begin{aligned}
				G^- &\mapsto \left(W + \tfrac{1}{2} (k+2)(k+3) \pd T\right) \otimes e^{-c} + (k+3) T \otimes i_{-1} e^{-c} \\
				&\mspace{60mu} - \vac \otimes \left(i_{-1}^3 + 3(k+2) i_{-2} i_{-1} + 2(k+2)^2 i_{-3}\right) e^{-c}.
			\end{aligned}
		\end{gathered}
	\end{equation}
	Here, $i$ and $j$ were defined in \eqref{eq:defi} and \eqref{eq:defj}, respectively.
\end{theorem}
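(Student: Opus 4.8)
The plan is to exploit the universal property of $\ubpk$ recorded in \cref{def:bp}: since $\ubpk$ is freely generated by $L$, $J$, $G^+$ and $G^-$ subject to no relations beyond the \opes\ \eqref{ope:bp}, there is a unique \voa\ homomorphism $\phi^k\colon\ubpk\to\uzamk\otimes\lvoa$ sending these generators to the elements on the right of \eqref{eq:realisation} as soon as those elements are shown to satisfy \eqref{ope:bp}. A preliminary check is that they carry the correct conformal weights $2,1,1,2$ with respect to $T\otimes\vac+\vac\otimes t$, using that $W$ has $T$-weight $3$, that $e^{nc}$ has $t$-weight $n$, and that $i_{-m}$ raises $t$-weight by $m$. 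Everything thereafter is an \ope\ computation inside the tensor-product vertex algebra, where the basic simplification is that $A\otimes\vac$ has trivial \ope\ with $\vac\otimes B$.

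Many of the \opes\ are then immediate. The image $\vac\otimes j$ of $J$ is a Heisenberg field with $\bilin{j}{j}=\tfrac{2k+3}{3}$ (the computation just after \eqref{eq:defi}), giving the $JJ$ \ope; $\bilin{j}{\pm c}=\pm1$ gives the $JG^{\pm}$ \opes; and $\bilin{c}{c}=0$ makes $e^c(z)e^c(w)$ regular by \eqref{eq:lvoaope}, giving $G^+(z)G^+(w)\sim0$. The image $T\otimes\vac+\vac\otimes t$ of $L$ is a sum of commuting conformal vectors of central charges $c^{\zamsymb}_k$ and $c^{\lvoa}_k$, hence is conformal of central charge $c^{\zamsymb}_k+c^{\lvoa}_k=c^{\bpsymb}_k$; this is the $LL$ \ope. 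For $LJ$ and $LG^+$ one only needs $t(z)$ acting on $j(w)$ and on $e^c(w)$: the background-charge part of $t$ produces exactly the anomaly $-\tfrac{2k+3}{3(z-w)^3}$ in the first case, while $e^c$ stays primary of $t$-weight $1$ in the second (background charge spoils the primarity of currents, not of lattice vertex operators). Finally, $JG^-$ works out because $\bilin{j}{i}=0$, so $j$ contracts only with the $e^{-c}$ factor in the image of $G^-$; this is precisely the reason for the choice of $j$ in \eqref{eq:defj}.

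The substance lies in the three remaining \opes\ $G^+G^-$, $LG^-$ and $G^-G^-$, whose joint consistency \emph{forces} the coefficients in the image of $G^-$, which we write as $W'\otimes e^{-c}+(k+3)T\otimes i_{-1}e^{-c}-\vac\otimes Pe^{-c}$ with $W'=W+\tfrac12(k+2)(k+3)\pd T$ and $P=i_{-1}^3+3(k+2)i_{-2}i_{-1}+2(k+2)^2i_{-3}$. Since $\vac\otimes e^c$ acts as the identity on the first tensor factor, $(\vac\otimes e^c)(z)\,\phi^k(G^-)(w)$ is computed entirely in $\lvoa$ from \eqref{eq:lvoaope} and the contraction $i(z)e^c(w)\sim(z-w)^{-1}e^c(w)$ (from $\bilin{i}{c}=1$): the $W'$-term contributes only regular terms, the $(k+3)T\otimes i_{-1}e^{-c}$-term contributes the simple pole $-(k+3)T(w)\otimes\vac$, and the $-\vac\otimes Pe^{-c}$-term supplies the triple pole $(k+1)(2k+3)$, the double pole $3(k+1)(\vac\otimes j)(w)$, and the remainder of the simple pole. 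I expect the \emph{main obstacle} to be matching the full simple-pole coefficient against the image of $3\no{JJ}+(2k+3)\pd J-(k+3)L$ from \eqref{ope:bp}, together with the parallel check that $\phi^k(G^-)$ is primary of weight $2$ for $T\otimes\vac+\vac\otimes t$ --- the latter requiring the $\pd T$-term in $W'$ to cancel the background-charge anomaly of $i_{-1}e^{-c}$ and requiring $Pe^{-c}$ to be primary on its own. These are finite but unforgiving computations with degree-two states in the lattice vertex algebra, and they pin down the four constants $\tfrac12(k+2)(k+3)$, $k+3$, $3(k+2)$, $2(k+2)^2$ simultaneously. The \ope\ $G^-(z)G^-(w)\sim0$ then follows from $\bilin{-c}{-c}=0$ once the singular terms produced by the $WW$, $TW$ and $TT$ \opes\ of $\uzamk$ are seen to cancel those coming from the $i$--$i$ and $i$--$e^{-c}$ contractions in $\lvoa$; with the constants already in hand this is routine.

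It remains to establish injectivity. Since $\phi^k(\vac)=\vac\otimes\vac$, the kernel of $\phi^k$ is a proper ideal of $\ubpk$, so there is nothing further to prove whenever $\ubpk$ is simple. For the general case one argues, as in \cref{sec:injective}, with the \pbw\ basis of $\ubpk$ given by ordered monomials in the modes of $L$, $J$, $G^+$, $G^-$, which are homogeneous for $L_0$ and, after applying $\phi^k$, also for the $c_0$-charge of $\lvoa$ (the $G^{\pm}$-modes mapping to charge $\pm1$ and the $L$- and $J$-modes to charge $0$); working in each fixed weight-and-charge subspace and using the injectivity of the operators $e^{nc}_{-n(r+1)}$ from \cref{prop:latmods} to isolate the leading lattice content of each image, one reduces to a linear independence statement guaranteed by the freeness of $\uzamk$ and of the Heisenberg subalgebra of $\lvoa$.
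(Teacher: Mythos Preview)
Your verification that $\phi^k$ is a vertex-algebra homomorphism follows exactly the paper's route: use universality of $\ubpk$ to reduce to checking that the images of $J$, $L$, $G^{\pm}$ satisfy the \opes\ \eqref{ope:bp}, with the substantive work concentrated in $G^+G^-$, $LG^-$ and $G^-G^-$. The paper's sketch does precisely this (illustrating with the third-order pole of $G^+G^-$ and otherwise appealing to \textsc{OPEdefs}), so on this half you are aligned.

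Your injectivity sketch, however, has a genuine gap. The high-level strategy --- \pbw\ basis, bigrading by $(L_0,c_0)$, triangularity --- is right, but the reduction you propose does not go through as stated. The image of $G^+_{(-n)}$ is $\vac\otimes e^c_{(-n)}$, a lattice operator rather than a Heisenberg mode, so ``freeness of the Heisenberg subalgebra of $\lvoa$'' is not what you land on; and the injectivity of the single operators $e^{nc}_{-n(r+1)}$ from \cref{prop:latmods} tells you that certain vectors are nonzero, not that a \emph{family} of them is linearly independent. The paper's key technical step, which your outline is missing, is \cref{prop:betterbasis}: one replaces the obvious Heisenberg-type basis $\{j_{(-\mu)} c_{(-\nu)} e^{nc}\}$ of $\lvoa$ by the alternative basis $B_{\lvoa}=\{j_{(-\mu)} e^c_{(-\nu-1)} e^{nc}\}$, using the Schur-function identity \eqref{eq:Schur2} to pass between $c$-modes and $e^c$-modes. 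In the resulting basis $B_{\zamsymb}\otimes B_{\lvoa}$, the image of $J_{(-\mu)} G^+_{(-\nu)} L_{(-\rho)} G^-_{(-\sigma)}\vac$ has the distinguished leading term $T_{(-\rho)} W_{(-\sigma)}\vac \otimes j_{(-\mu)} e^c_{(-\nu)} e^{-\ell(\sigma)c}$; every other term in the expansion has fewer $T$-modes, fewer $W$-modes, or the same counts but a $W$-partition of strictly smaller weight $\lvert\sigma\rvert$. These leading terms are pairwise distinct basis vectors, which is what forces linear independence of the images. Without this alternate basis and the accompanying triangularity bookkeeping, the final ``linear independence statement'' you invoke is not actually established.
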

\begin{proof}[Sketch of proof that $\phi^k$ is a \voa\ homomorphism]
	Because $\ubpk$ is universal, it suffices to show that the \opes\ \eqref{ope:bp} of the generators $J$, $L$ and $G^{\pm}$ match those of their $\phi^k$-images.  This can be checked explicitly from the defining \opes\ \eqref{ope:z} of $\uzamk$ and those, for example \eqref{eq:lvoaope}, of $\lvoa$.  We used the computer algebra package \textsc{OPEdefs} \cite{ThiOPE91} for this purpose, but the computations are also easily performed by hand.

	For example, the coefficient of the third-order pole of $G^+(z) G^-(w)$ in \eqref{ope:bp} is determined by $G^+_2 G^- = (k+1)(2k+3) \vac$ whilst the corresponding calculation for their $\phi^k$-images proceeds as follows.  First, note that
	\begin{equation}
		\phi^k(G^+)_2 \phi^k(G^-) = -\vac \otimes \left[e^c_2, i_{-1}^3 + 3(k+2) i_{-2} i_{-1} + 2(k+2)^2 i_{-3}\right] e^{-c},
	\end{equation}
	because $e^c_2$ annihilates both $e^{-c}$ and $i_{-1} e^{-c}$.  Since $[e^c_m, i_n] = -e^c_{m+n}$ and $e^c_{-1} e^{-c} = \vac$, this indeed evaluates to
	\begin{align}
		\phi^k(G^+)_2 \phi^k(G^-)
		&= \vac \otimes \left(e^c_1 i_{-1}^2 + 3(k+2) e^c_0 i_{-1} + 2(k+2)^2 e^c_{-1}\right) e^{-c} \\
		&= \vac \otimes \left(-e^c_0 i_{-1} - 3(k+2) e^c_{-1} + 2(k+2)^2 e^c_{-1}\right) e^{-c} \notag \\
		&= \vac \otimes \left(1 - 3(k+2) + 2(k+2)^2 \right) e^c_{-1} e^{-c} \notag \\
		&= (k+1) (2k+3) \vac. \notag
	\end{align}
	Similar computations determine that all the singular coefficients match, hence that $\phi^k$ is a homomorphism.
\end{proof}

\begin{remark} \label{rem:sfmatch}
	Comparing the spectral flow maps $\bpsfsymb^{\ell}$ of $\ubpk$ and $\lsfsymb^{\ell}$ of $\lvoa$, we see that the explicit realisation \eqref{eq:realisation} requires $\bpsfsymb^{\ell} = \id \otimes \lsfsymb^{\ell}$.  In the definition \eqref{eq:defj} of the spectral flow maps of $\lvoa$, we could have replaced $j$ by any $h \in \alg{h}$ and still preserved the \opes.  However, the above realisation singles out $h=j$ as being particularly useful for our purposes.
\end{remark}

\begin{remark}
	The universal Zamolodchikov algebras $\uzamk$ admit free field realisations inside a rank-$2$ Heisenberg \voa\ $H'$, equipped with a positive-definite bilinear form \cite{ZamDis86}.  Tensoring $H'$ with $\lvoa$ thus gives free field realisations of the universal \bp\ algebras $\ubpk$.  This is not the standard free field realisation \cite{BerCon91}, but is obtained from it by bosonising the $\beta\gamma$ ghost system.  Free field realisations of the simple \bp\ algebras $\sbpk$ are less common.  However, such realisations are known for $k=-\frac{9}{4}$ \cite{CreCos13} and $k=-\frac{5}{3}$ \cite{AK}.  At other levels, we expect that BRST-type realisations exist instead.
\end{remark}

We will prove that $\phi^k$ is injective in \cref{sec:injective}.  Granting this, it follows from \cref{thm:realisation} that for $k\ne-3$, any $\uzamk \otimes \lvoa$-module is a $\ubpk$-module, by restriction.  Combining this with \cref{prop:latmods}, we obtain a construction of many positive-energy $\ubpk$-modules.  \cref{prop:latchars} then gives their characters.
\begin{corollary} \label{cor:ubp-rhwms}
	Let $k\ne-3$ and suppose that $M$ is a $\uzamk$-module that admits a character: $\fch{M}{q} = \traceover{M} q^{T_0 - c^{\zamsymb}_k  /24}$.  Then, the $\ubpk$-module $\rmod{M}{\lambda} = M \otimes \lmod{-1}{\lambda}$ is \rhw\ with character
	\begin{align}
		\fch{\rmod{M}{\lambda}}{z;q}
		&= \traceover{M \otimes \lmod{-1}{\lambda}} z^{J_0} q^{L_0-c^{\bpsymb}_k/24} \\
		&= \fch{M}{q} \fch{\lmod{-1}{\lambda}}{z^{(2k+3)/3},z;q} \notag \\
		&= \frac{z^{\lambda - (2k+3)/3} \fch{M}{q}}{\eta(q)^2} \delta(z). \notag
	\end{align}
\end{corollary}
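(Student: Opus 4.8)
The plan is to unravel the three statements — that $\rmod{M}{\lambda}$ is a well-defined $\ubpk$-module, that it is \rhw, and that its character has the stated product form — each as an essentially formal consequence of \cref{thm:realisation} (granting injectivity of $\phi^k$), \cref{prop:latmods}, and \cref{prop:latchars}. First I would note that by \cref{thm:realisation} we have a homomorphism $\ubpk \to \uzamk \otimes \lvoa$, so any $\uzamk \otimes \lvoa$-module restricts to a $\ubpk$-module; applying this to the outer tensor product $M \otimes \lmod{-1}{\lambda}$ (which is a $\uzamk \otimes \lvoa$-module whenever $M$ is a $\uzamk$-module and $\lmod{-1}{\lambda}$ is the $\lvoa$-module of \cref{prop:latmods}) gives the $\ubpk$-module structure on $\rmod{M}{\lambda}$.

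Next I would check the \rhw\ property directly from the images \eqref{eq:realisation}. The generator $J$ acts as $\id \otimes j_{(0)}$-type operators, so $J_0 = \vac \otimes j_0$ acts semisimply on $M \otimes \lmod{-1}{\lambda}$ with the $i_0$-weight decomposition of $\lmod{-1}{\lambda}$ (times the trivial grading on $M$) providing a weight-space decomposition; the positive modes $J_n$, $n>0$, act as $\vac \otimes j_n$ and annihilate a \rhwv\ of $\lmod{-1}{\lambda}$ tensored with any vector of $M$. Similarly $L_n = T_n \otimes \vac + \vac \otimes t_n$ for $n>0$, and $G^+_n$, $G^-_n$ for $n$ sufficiently large, are built from the positive modes of $T$, $W$, the $i_m$ and the $e^{\pm c}_m$; all of these annihilate the tensor product of a \hwv\ of $M$ with a \rhwv\ of $\lmod{-1}{\lambda}$. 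Since $\lmod{-1}{\lambda}$ is positive-energy with one-dimensional (in fact $c_0$-weight) spaces in each graded piece of its top space by \cref{prop:latmods}, and $M$ has a character hence finite-dimensional graded pieces, $\rmod{M}{\lambda}$ is positive-energy and is generated from its top space; one checks it is generated by a single \rhwv, which is the content of being \rhw.

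Finally, the character. Here I would use that, under $\phi^k$, the grading operators match as $L_0 = T_0 \otimes \vac + \vac \otimes t_0$ and $J_0 = \vac \otimes j_0$, together with the relation $c^{\bpsymb}_k = c^{\zamsymb}_k + c^{\lvoa}_k$ recorded just before \cref{thm:realisation}. Therefore
\begin{equation*}
	\traceover{M \otimes \lmod{-1}{\lambda}} z^{J_0} q^{L_0 - c^{\bpsymb}_k/24}
	= \left( \traceover{M} q^{T_0 - c^{\zamsymb}_k/24} \right) \left( \traceover{\lmod{-1}{\lambda}} z^{j_0} q^{t_0 - c^{\lvoa}_k/24} \right).
\end{equation*}
The first factor is $\fch{M}{q}$ by hypothesis. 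For the second, I would express $j_0$ in terms of the variables $y^{c_0} z^{i_0}$ used in \cref{prop:latchars}: since $j = d - i$ and $d = a+b$, $c = a-b$, one has $\bilin{j}{c} = 1$, $\bilin{j}{i} = 0$, $\bilin{j}{j} = \frac{2k+3}{3}$, so on a vector of $c_0$-eigenvalue $-1$ (the value of $r$) and $i_0$-eigenvalue $\mu$ the operator $j_0$ has eigenvalue $\mu - \frac{2k+3}{3}$ shifted appropriately; matching this against $\fch{\lmod{-1}{\lambda}}{y,z;q}$ of \cref{prop:latchars} with the substitution $y = z^{(2k+3)/3}$ reproduces $z^{\lambda - (2k+3)/3}/\eta(q)^2 \cdot \delta(z)$. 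Assembling the two factors gives the claimed formula. The only genuinely delicate point is the bookkeeping of which linear combination of the Heisenberg zero modes equals $J_0$ and how it pairs with $c_0$ and $i_0$ — i.e. getting the exponent $\lambda - (2k+3)/3$ and the substitution $y \mapsto z^{(2k+3)/3}$ exactly right — but this is a short computation with the bilinear form on $\alg h$ rather than a conceptual obstacle; everything else is a formal consequence of restriction along $\phi^k$ and the additivity of the central charge and the grading operators.
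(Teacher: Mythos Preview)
Your proposal is correct and takes essentially the same approach as the paper, which offers no proof beyond citing \cref{thm:realisation}, \cref{prop:latmods}, and \cref{prop:latchars}; your explicit character computation via $L_0 = T_0 \otimes \vac + \vac \otimes t_0$, $J_0 = \vac \otimes j_0$, the central charge relation, and the substitution $y \mapsto z^{(2k+3)/3}$ (equivalently $j_0 = i_0 + \tfrac{2k+3}{3}\,c_0$) is exactly right. The one soft spot is your ``one checks it is generated by a single \rhwv'': this is not the routine verification you suggest --- for a general $M$ merely admitting a character it need not hold, and even for top-generated $M$ the top-generation of $\rmod{M}{\lambda}$ is precisely the content of the later \cref{thm:rmodtopgen} --- but the paper is equally informal on this point in the corollary itself.
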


Recall that a \emph{\rhw} $\ubpk$- or \emph{$\sbpk$-module} is a module generated by a single \rhwv, the latter being a weight vector that is annihilated by the $L_n$, $J_n$ $G^+_n$ and $G^-_n$ with $n>0$.  When $M$ is irreducible, we shall show in \cref{sec:almost} that the $\rmod{M}{\lambda}$ are always indecomposable and, in fact, are irreducible for almost all $\lambda$.

\section{Injectivity of $\phi^k$} \label{sec:injective}

In this \lcnamecref{sec:injective}, we show that the homomorphism $\phi^k \colon \ubpk \to \uzamk \otimes \lvoa$ is injective, for $k\ne-3$, and thereby prove \cref{thm:realisation}.  Recall that a partition is a finite sequence of positive integers $\mu = (\mu_1, \mu_2, \dots, \mu_{\ell})$ of length $\ell = \ell(\mu) \in \N$ satisfying
\begin{equation}
	\mu_1 \ge \mu _2 \ge \dots \ge \mu_{\ell}.
\end{equation}
The weight of the partition $\mu$ is defined to be $\abs{\mu} = \mu_1 + \mu_2 + \dots + \mu_{\ell}$.  Let $\parts$ denote the set of all partitions.

Given a partition $\mu \in \parts$ of length $\ell$ and an element $A$ of a vertex algebra, we introduce (whenever it makes sense) the convenient notation
\begin{equation} \label{eq:modepartitions}
	\begin{aligned}
		A_{+\mu} &= A_{\mu_{\ell}} \cdots A_{\mu_2} A_{\mu_1}, &
		A_{-\mu} &= A_{-\mu_1} A_{-\mu_2} \cdots A_{-\mu_{\ell}}, \\
		A_{(+\mu)} &= A_{(\mu_{\ell})} \cdots A_{(\mu_2)} A_{(\mu_1)}, &
		A_{(-\mu)} &= A_{(-\mu_1)} A_{(-\mu_2)} \cdots A_{(-\mu_{\ell})},
	\end{aligned}
\end{equation}
recalling the conventions for mode indices in \eqref{eq:expansionconventions}.  We shall also write $\mu+n$ (and $-\mu-n$) in the above to indicate that every part of $\mu \in \parts$ should be increased by $n \in \N$.  The following \lcnamecref{lem:pbwbpzam} is now clear from universality (see \cite[Thm.~4.1(b)]{KW}).
\begin{lemma}\label{lem:pbwbpzam}
	\leavevmode
	\begin{itemize}
		\item The universal \bp\ algebra $\ubpk$ has a \pbw-type basis $B_{\bpsymb} = \{J_{(-\mu)} G^+_{(-\nu)} L_{(-\rho)} G^-_{(-\sigma)} \vac \suchthat \mu, \nu, \rho, \sigma \in \parts\}$.
		\item The universal Zamolodchikov algebra $\uzamk$ likewise has a \pbw-type basis $B_{\zamsymb} = \{T_{(-\mu)} W_{(-\nu)} \vac \suchthat \mu, \nu \in \parts\}$.
	\end{itemize}
\end{lemma}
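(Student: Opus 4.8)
The plan is to combine the freeness of the relevant quantum Hamiltonian reductions, recorded in \cite[Thm.~4.1]{KW} and \cite{KRW}, with a routine reordering argument.

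First I would verify that $B_{\bpsymb}$ \emph{spans} $\ubpk$. Because $\ubpk$ is presented by the generators $J, G^+, L, G^-$ and the OPEs \eqref{ope:bp}, the reconstruction theorem furnishes a spanning set consisting of all mode-monomials $a^1_{(-n_1)} \cdots a^r_{(-n_r)} \vac$ with $a^i \in \{J, G^+, L, G^-\}$ and every $n_i \ge 1$. I would then reorder each such monomial into the prescribed shape --- all $J$-modes, then all $G^+$-modes, then all $L$-modes, then all $G^-$-modes --- using the commutation relations \eqref{cr:bp}. Interchanging two adjacent modes either annihilates the monomial (two $G^+$- or two $G^-$-modes) or replaces it by a sum of monomials with strictly fewer modes; the one point that needs care is the term $3\no{JJ}_{m+n}$ in $[G^+_m,G^-_n]$, but $\no{JJ}_{m+n}$ is itself a (locally finite) sum of products of two $J$-modes, hence again lies in the span of mode-monomials of the generators, and, crucially, it carries no $G^{\pm}$-modes. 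Filtering first by the number of $G^{\pm}$-modes --- which never increases and strictly drops whenever a $G^+$ crosses a $G^-$ --- then by the total number of modes, and finally by the number of out-of-order pairs, makes this rewriting terminate with output a linear combination of elements of $B_{\bpsymb}$. The same argument applied to $\uzamk$, using the modes of \eqref{ope:z} and the term $\no{TT}$ appearing in $[W_m,W_n]$, shows that $B_{\zamsymb}$ spans $\uzamk$.

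The linear independence is where universality genuinely enters. I would use that the minimal and principal reductions $\uwalg{k}{\SLA{sl}{3}}{f_{\text{min}}}$ and $\uwalg{k}{\SLA{sl}{3}}{f_{\text{pr}}}$ are \emph{freely} strongly generated by fields in bijection with a homogeneous basis of the centraliser $\SLA{sl}{3}^{f}$: four fields of conformal weights $1,1,2,2$ in the minimal case and two fields of weights $2,3$ in the principal case (\cite[Thm.~4.1(b)]{KW}, \cite{KRW}). With the standard strong generators, suitably normalised, these carry precisely the OPEs \eqref{ope:bp} and \eqref{ope:z}, and ``freely strongly generated'' means exactly that the corresponding ordered mode-monomials form a basis. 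Since $\ubpk$ is universal, there is a homomorphism $\ubpk \to \uwalg{k}{\SLA{sl}{3}}{f_{\text{min}}}$ matching generators, which is surjective because those fields strongly generate the target, and it carries $B_{\bpsymb}$ onto scalar multiples of the W-algebra's PBW basis. A spanning set that maps onto a linearly independent set is itself linearly independent, so $B_{\bpsymb}$ is a basis of $\ubpk$ (and the homomorphism is an isomorphism). The identical reasoning with $\uzamk$ and $\uwalg{k}{\SLA{sl}{3}}{f_{\text{pr}}}$ proves the statement for $B_{\zamsymb}$.

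I expect the only real obstacle to be bookkeeping rather than mathematics: checking carefully that the rewriting in the spanning step terminates (the triple filtration above is what makes it work) and that the normalisations of the generators in \cref{def:bp} and in the definition of $\uzamk$ are matched to the standard strong generators of \cite{KRW,KW}. Given the cited freeness results, the remainder is formal.
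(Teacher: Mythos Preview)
Your proposal is correct and is essentially an unpacking of the paper's one-line justification, which reads in full: ``The following \lcnamecref{lem:pbwbpzam} is now clear from universality (see \cite[Thm.~4.1(b)]{KW}).'' You have spelled out precisely what that sentence encodes --- the spanning argument from the reconstruction theorem together with the freeness of the reductions from \cite{KW} for linear independence --- so there is no substantive difference in approach.
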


Because the lattice \voa\ $\lvoa$ restricts, as an $\heis$-module, to an infinite direct sum of Fock modules (one for each $e^{nc} \in \lvoa$), we get our third basis.
\begin{lemma} \label{lem:pbwlvoa}
	The lattice \voa\ $\lvoa$ has a \pbw-type basis $B_{\lvoa}' = \{j_{(-\mu)} c_{(-\nu)} e^{nc} \suchthat \mu, \nu \in \parts\ \text{and}\ n \in \Z\}$.
\end{lemma}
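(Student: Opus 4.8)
The plan is to deduce the \lcnamecref{lem:pbwlvoa} directly from the decomposition of $\lvoa$ as an $\heis$-module noted just above. First I would check that $\{j,c\}$ is a basis of $\alg h$. From $c=a-b$ and \eqref{eq:defj} one finds $j=\tfrac{k+3}{3}a-\tfrac{k}{3}b$, so the matrix expressing $(c,j)$ in terms of $(a,b)$ has determinant $(1)(-\tfrac{k}{3})-(-1)(\tfrac{k+3}{3})=1\ne0$; hence $\{j,c\}$ is a basis for every $k\ne-3$ (indeed for every $k$).

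Next I would recall that $\lvoa$ restricts, as an $\heis$-module (equivalently, as a module over the Heisenberg current algebra $\widehat{\alg h}$), to $\bigoplus_{n\in\Z}F_n$, where $F_n$ denotes the Fock module with cyclic vector $e^{nc}$: the modes $h_{(m)}$ with $h\in\alg h$ and $m>0$ annihilate $e^{nc}$, while $h_{(0)}$ acts on it as the scalar $\bilin{h}{nc}$. This decomposition is a $\Z$-grading (by the lattice $\Z c$) that is preserved by every Heisenberg mode $h_{(m)}$; consequently subspaces with distinct $n$ are linearly independent, and it suffices to exhibit a basis of each $F_n$.

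Finally, since $\{j,c\}$ is a basis of $\alg h$, the negative modes $\{j_{(-m)},c_{(-m)}\suchthat m>0\}$ generate, via the Heisenberg relations $[h_{(m)},h'_{(n)}]=m\bilin{h}{h'}\delta_{m+n,0}$, a commutative algebra of operators on $\lvoa$ acting freely on the cyclic vector $e^{nc}$; this is precisely the standard realisation of $F_n$ as a free rank-one module over that polynomial algebra. Ordering the resulting monomials according to the convention \eqref{eq:modepartitions}, which is unambiguous because all of these modes commute, one obtains the basis $\{j_{(-\mu)}c_{(-\nu)}e^{nc}\suchthat\mu,\nu\in\parts\}$ of $F_n$. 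Taking the direct sum over $n\in\Z$ yields $B_{\lvoa}'$.

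Since this is just the standard Fock-space construction combined with the lattice grading, I do not expect a genuine obstacle; the only points that need a moment's care are the verification that $\{j,c\}$ is a basis of $\alg h$ (so that these particular modes both suffice and remain independent) and the bookkeeping of the round-bracket mode convention of \eqref{eq:expansionconventions}, which is harmless here since $j$ and $c$ both have conformal weight $1$ under $t$.
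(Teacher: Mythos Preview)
Your proposal is correct and follows essentially the same approach as the paper: the paper simply remarks that the lemma follows immediately from the fact that $\lvoa$ restricts, as an $\heis$-module, to a direct sum of Fock modules indexed by $e^{nc}$, and you have spelled out exactly this argument, including the (necessary) check that $\{j,c\}$ is a basis of $\alg{h}$.
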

\noindent This basis will, however, need some finessing.  Let $S_m(c)$, $m\in\Z_{\ge0}$, denote the Schur function in the (commuting) variables $c_{(-n)}$, $n \in \Z_{\ge1}$, corresponding to the partition $(m)$.  Equivalently, these functions may be defined by the following special case of the Cauchy identity:
\begin{equation} \label{eq:Schur2}
	\prod_{n=1}^{\infty} \exp\left(\frac{c_{(-n)}}{n} z^n\right) = \sum_{m=0}^{\infty} S_m(c) z^m.
\end{equation}
In particular, we have
\begin{equation}
	S_0(c) = 1, \quad S_1(c) = c_{(-1)} \quad \text{and} \quad S_2(c) = \frac{1}{2} \left(c_{(-2)} + c_{(-1)}^2\right).
\end{equation}
For general $m \in \Z_{\ge1}$, the $S_m(c)$ have the form
\begin{equation} \label{eq:Schur1}
	S_m(c) = \frac{1}{m} c_{(-m)} + [\text{terms quadratic and higher in the $c_{(-n)}$ with $n<m$}].
\end{equation}

\begin{proposition} \label{prop:betterbasis}
	The set $B_{\lvoa} = \{j_{(-\mu)} e^c_{(-\nu-1)} e^{nc} \suchthat \mu, \nu \in \parts\ \text{and}\ n \in \Z\}$ is also a basis of $\lvoa$.
\end{proposition}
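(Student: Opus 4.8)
The plan is to obtain $B_{\lvoa}$ from the basis $B_{\lvoa}'$ of \cref{lem:pbwlvoa} by a triangular change of variables; the one nontrivial input is an explicit formula for how the modes $e^c_{(m)}$ act on the generating vectors $e^{nc}$.

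First I would record that, because $\bilin{c}{c}=0$, the field $e^c(z)$ is genuinely $\Z$-moded (the $z^{c_0}$ factor in the lattice vertex operator $Y(e^c,z)$ acts as the identity since $\bilin{c}{c}=0$, and there is no cocycle subtlety in the isotropic rank-one case) and that the positive modes $c_{(m)}$ annihilate $e^{nc}$.  The standard form of $Y(e^c,z)$ then gives
\begin{equation*}
	e^c(z)\, e^{nc} = \Biggl( \prod_{m=1}^{\infty} \exp\Bigl( \tfrac{1}{m} c_{(-m)} z^m \Bigr) \Biggr) e^{(n+1)c} = \sum_{m\ge0} S_m(c)\, e^{(n+1)c}\, z^m,
\end{equation*}
the second equality being \eqref{eq:Schur2}.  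Reading off coefficients yields $e^c_{(-m-1)} e^{nc} = S_m(c)\, e^{(n+1)c}$ for $m\ge0$ and $e^c_{(m)} e^{nc} = 0$ for $m\ge0$ (the case $n=-1$ is precisely \eqref{eq:lvoaope}).

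Next, since $[c_{(m)}, e^c_{(j)}] = \bilin{c}{c}\, e^c_{(m+j)} = 0$, the modes $e^c_{(j)}$ commute with all the $c_{(m)}$.  Iterating the displayed formula along the parts of $\nu = (\nu_1,\dots,\nu_\ell)\in\parts$, pulling each Schur polynomial past the remaining $e^c$-modes, I would obtain
\begin{equation*}
	e^c_{(-\nu-1)}\, e^{nc} = \Biggl( \prod_{i=1}^{\ell(\nu)} S_{\nu_i}(c) \Biggr) e^{(n+\ell(\nu))c}.
\end{equation*}
As the $j$-modes act on the Heisenberg Fock spaces exactly as the polynomial algebra they generate, the general element of $B_{\lvoa}$ is therefore $j_{(-\mu)} \bigl( \prod_i S_{\nu_i}(c) \bigr) e^{(n+\ell(\nu))c}$.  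The remaining task is to show that $\{ \prod_i S_{\nu_i}(c) \suchthat \nu\in\parts \}$ is a basis of $\C[c_{(-1)},c_{(-2)},\dots]$, which follows from the triangularity \eqref{eq:Schur1}: on expanding the product, the term assembled from the leading monomials $\tfrac{1}{\nu_i} c_{(-\nu_i)}$ contributes $\bigl( \prod_i \nu_i \bigr)^{-1} c_{(-\nu)}$, while every other term is a product of strictly more than $\ell(\nu)$ of the $c_{(-m)}$.  Grading $\C[c_{(-m)}]$ by weight (with $c_{(-m)}$ of degree $m$) and ordering the finitely many monomials $c_{(-\rho)}$ of each degree by increasing length $\ell(\rho)$, the change of basis from $\{c_{(-\nu)}\}$ to $\{\prod_i S_{\nu_i}(c)\}$ is triangular with nonzero diagonal entries $(\prod_i\nu_i)^{-1}$, hence invertible.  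Assembling this with \cref{lem:pbwlvoa}: for fixed ``charge'' $p\in\Z$ and weight $N\ge0$, the elements of $B_{\lvoa}$ lying in the weight-$N$ subspace of $\heis\cdot e^{pc}$ are exactly the $j_{(-\mu)}\bigl(\prod_i S_{\nu_i}(c)\bigr) e^{pc}$ with $\abs\mu+\abs\nu=N$ (taking $n=p-\ell(\nu)$), and the triangularity just established, together with the fact that $\{j_{(-\mu)}\}$ is a basis of $\C[j_{(-m)}]$, shows these form a basis of that subspace; letting $p$ and $N$ vary gives the claim.

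I expect no genuine obstacle here: the lattice vertex-operator computation and the underlying Schur-function combinatorics (complete-homogeneous versus power-sum generators of a polynomial algebra) are both standard.  The one point needing care is the bookkeeping in the final assembly, namely checking that the charge-$p$, weight-$N$ block of $\lvoa$ is hit exactly once by each pair of partitions $(\mu,\nu)$ with $\abs\mu+\abs\nu=N$, so that a square triangular change of basis really does apply on each graded piece.
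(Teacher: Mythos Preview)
Your proof is correct and follows essentially the same route as the paper: you derive the key identity $e^c_{(-m-1)} e^{nc} = S_m(c)\, e^{(n+1)c}$ from the explicit lattice vertex operator, iterate it to express $e^c_{(-\nu-1)} e^{nc}$ as a product of Schur polynomials acting on $e^{(n+\ell(\nu))c}$, and then invoke the triangularity \eqref{eq:Schur1} (leading term $c_{(-\nu)}$, remainder of strictly greater $c$-length) to pass between $B_{\lvoa}'$ and $B_{\lvoa}$. The paper's argument is the same, only stated more tersely; your added bookkeeping on the charge/weight decomposition is sound and makes the ``square triangular'' step explicit.
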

\begin{proof}
	It follows easily from \eqref{eq:Schur2} and $c$ being isotropic that
	\begin{equation} \label{eq:eone}
		e^c_{(-m-1)} e^{nc} =
		\begin{cases*}
			S_m(c) e^{(n+1)c} & if $m\ge0$, \\
			0 & otherwise.
		\end{cases*}
	\end{equation}
	Setting $\ell = \ell(\nu)$, we therefore have
	\begin{align} \label{eq:triangular}
		e^c_{(-\nu-1)} e^{nc}
		&= e^c_{(-\nu_1-1)} \cdots e^c_{(-\nu_{\ell}-1)} e^{nc}
		= S_{\nu_1}(c) \cdots S_{\nu_{\ell}}(c) e^{(n+\ell)c} \\
		&= \frac{c_{(-\nu_1)} \cdots c_{(-\nu_{\ell})} e^{(n+\ell)c}}{\nu_1 \cdots \nu_{\ell}} + \cdots
		= \frac{c_{(-\nu)} e^{(n+\ell)c}}{\nu_1 \cdots \nu_{\ell}} + \cdots, \notag
	\end{align}
	where $+ \cdots$ indicates terms whose $c$-degrees are greater than $\ell$.  Composing with $j_{(-\mu)}$, it follows that the elements of $B_{\lvoa}$ are linearly independent.  Moreover, they span $\lvoa$ because an obvious inductive triangularity argument shows that \eqref{eq:triangular} may be inverted and thus solved for the basis elements of $B_{\lvoa}'$.
\end{proof}

\begin{remark}
	Note that an element of the form $j_{(-\mu)} e^c_{(-\nu')} e^{n'c} \in \lvoa$, $\mu, \nu' \in \parts$ and $n' \in \Z$, may be identified with one of the basis vectors of $B_{\lvoa}$ by isolating any parts of $\nu'$ equal to $1$.  If there are $m$ such parts, let $\nu''$ be the partition obtained from $\nu'$ by removing them.  Then, $\nu''$ has no part equal to $1$ so it may be written as $\nu+1$ for some (unique) partition $\nu$.  Setting $n=m+n'$, we get the desired form:
	\begin{equation}
		j_{(-\mu)} e^c_{(-\nu')} e^{n'c} = j_{(-\mu)} e^c_{(-\nu'')} \bigl(e^c_{(-1)}\bigr)^m e^{n'c} = j_{(-\mu)} e^c_{(-\nu-1)} e^{nc}.
	\end{equation}
\end{remark}

We now prove that $\phi^k$ is injective.
\begin{proof}[Proof of \cref{thm:realisation}]
	Recall from the explicit realisation \eqref{eq:realisation} that
	\begin{equation}
		\begin{aligned}
			\phi^k(J_{(-n)}) &= \vac \otimes j_{(-n)}, \\
			\phi^k(G^+_{(-n)}) &= \vac \otimes e^c_{(-n)}, \\
			\phi^k(L_{(-n)}) &= T_{(-n)} \otimes \vac + \text{[terms not involving $T$]} \\
			\text{and} \quad \phi^k(G^-_{(-n)}) &= \sum_{m=0}^{\infty} W_{(-n+m)} \otimes e^{-c}_{(-m-1)} + \text{[terms not involving $W$].}
		\end{aligned}
	\end{equation}
	We will show that the images of the $\ubpk$ basis vectors in $B_{\bpsymb}$ are linearly independent.  These images have the form
	\begin{align} \label{eq:theimage}
		\phi^k(J_{(-\mu)} G^+_{(-\nu)} L_{(-\rho)} G^-_{(-\sigma)} \vac)
		&= T_{(-\rho)} W_{(-\sigma)} \vac \otimes j_{(-\mu)} e^c_{(-\nu)} \bigl(e^{-c}_{(-1)}\bigr)^{\ell(\sigma)} \vac + \cdots \\
		&= T_{(-\rho)} W_{(-\sigma)} \vac \otimes j_{(-\mu)} e^c_{(-\nu)} e^{-\ell(\sigma) c} + \cdots, \notag
	\end{align}
	where $+ \cdots$ indicates a linear combination of similar terms that have either fewer $T$-modes, fewer $W$-modes, or have the same number of $T$- and $W$-modes but also have some $e^{-c}_{(-m-1)}$-modes with $m\ge1$.  In the latter case, the $W$-partition $\sigma$ is replaced by another of the same length but strictly lower weight.

	The $c \to -c$ analogues of \eqref{eq:eone} and \eqref{eq:triangular} show that the action of any $e^{-c}_{(-m-1)}$ may be expressed in terms of the action of $c$-modes, hence in terms of actions of $e^c$-modes.  We can thereby rewrite any term with an $e^{-c}_{(-m-1)}$ as a linear combination of basis terms from $B_{\zamsymb} \otimes B_{\lvoa}$.  The point is that this rewriting will not change the $j$-, $T$- and $W$-modes, in particular it will not change the fact that the corresponding $W$-partition has weight strictly lower than $\abs{\sigma}$.

	The term exhibited on the \rhs\ of \eqref{eq:theimage} is therefore the unique term, when expressed in the basis $B_{\zamsymb} \otimes B_{\lvoa}$, with $\ell(\mu)$ $j$-modes, $\ell(\rho)$ $T$-modes and $\ell(\sigma)$ $W$-modes corresponding to a partition of weight $\abs{\sigma}$.  The images on the \lhs\ are therefore linearly independent, as desired, hence $\phi^k$ is injective.
\end{proof}

\section{Almost-irreducibility} \label{sec:almost}

Our next aim is to study the irreducibility of the \rhw\ $\ubpk$-modules $\rmod{M}{\lambda}$, introduced in \cref{cor:ubp-rhwms}.  More specifically, we wish to show that $\rmod{M}{\lambda}$ is ``almost-irreducible'' (to be defined shortly) when $M$ is an irreducible \hw\ $\uzamk$-module.  Here, we first prepare the groundwork for this by proving that the $\lvoa$-modules $\lmod{-1}{\lambda}$ are almost-irreducible as modules over a certain \vosa\ $U$.

\begin{definition}\label{def:almost}
	Let $V$ be a \voa\ and $M$ an $\N$-graded $V$-module with top component $\tp{M}$.
	\begin{itemize}
		\item We say that $M$ is \emph{top-generated} if $M$ is generated by $\tp{M}$.
		\item We say that $M$ has \emph{only top-submodules} if every nonzero submodule of $M$ has a nonzero intersection with $\tp{M}$.
		\item We say that $M$ is \emph{almost-irreducible} if it is top-generated and has only top-submodules.
	\end{itemize}
\end{definition}

One motivation for introducing almost-irreducibility is to isolate a class of modules whose irreducibility is determined by its top space.  Recall that the top space $\tp{M}$ of a module over a \voa\ $V$ is naturally a module over the Zhu algebra $\zhu{V}$ \cite{ZhuMod96}.  The action of $\zhu{V}$ is of course nothing but the action of the zero modes $A_0$, $A \in V$, on the top space.
\begin{proposition} \label{prop:irred}
	If $M$ is an almost-irreducible $V$-module and $\tp{M}$ is an irreducible $\zhu{V}$-module, then $M$ is irreducible.
\end{proposition}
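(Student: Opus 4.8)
The plan is to argue by contraposition: assume $M$ is not irreducible and produce a proper nonzero $\zhu{V}$-submodule of $\tp{M}$. So suppose $N \subseteq M$ is a nonzero proper submodule. Since $M$ has only top-submodules, $N \cap \tp{M}$ is a nonzero subspace of $\tp{M}$. Because $\tp{M}$ is an irreducible $\zhu{V}$-module, this forces $N \cap \tp{M} = \tp{M}$, i.e. $\tp{M} \subseteq N$.

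Now I would invoke the top-generated hypothesis: $M$ is generated (as a $V$-module) by $\tp{M}$. Since $N$ is a submodule containing $\tp{M}$, it must contain the submodule generated by $\tp{M}$, which is all of $M$. Hence $N = M$, contradicting properness. Therefore $M$ has no proper nonzero submodule, i.e. $M$ is irreducible.

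The only point requiring a word of care is the claim that $N \cap \tp{M}$ is a $\zhu{V}$-submodule of $\tp{M}$, rather than merely a subspace. This is immediate: $N$ is $V$-stable, hence stable under all zero modes $A_0$ with $A \in V$; these zero modes preserve the conformal weight grading and so preserve $\tp{M}$; thus $A_0(N \cap \tp{M}) \subseteq N \cap \tp{M}$, and the $\zhu{V}$-action on $\tp{M}$ is by definition the action of these zero modes \cite{ZhuMod96}. I expect no real obstacle here — the statement is essentially a formal consequence of the definitions of top-generated and only-top-submodules, combined with Zhu's correspondence between $V$-module structure on the top space and $\zhu{V}$-module structure. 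The whole argument is a short diagram chase and can be written in a few lines.
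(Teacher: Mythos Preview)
Your proof is correct and follows essentially the same argument as the paper's own proof: take a nonzero submodule, intersect with the top space to get a nonzero $\zhu{V}$-submodule, use irreducibility to conclude the top space lies in the submodule, then use top-generation to conclude the submodule is everything. Your additional justification that $N \cap \tp{M}$ is a $\zhu{V}$-submodule (not merely a subspace) is a point the paper leaves implicit, so your version is slightly more detailed but otherwise identical.
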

\begin{proof}
  Let $N$ be a nonzero submodule of $M$.  Since $M$ has only top-submodules, $N \cap \tp{M}$ is a nonzero $\zhu{V}$-module.  Since $\tp{M}$ is irreducible, we have $\tp{M} \subseteq N$.  Finally, $M$ being top-generated forces $M \subseteq N$, hence $N=M$.
\end{proof}

Another motivation is to model modules obtained by Zhu-induction \cite{ZhuMod96,LiRep94}.  More precisely, this induction functor constructs a \voa\ module from a Zhu algebra module in such a way that the top space of the former coincides with the latter.  If we now quotient the former by the sum of all submodules whose intersection with the top space is zero, then the result has the same top space but is now almost-irreducible.  In this sense, almost-irreducibility captures the notion of the ``smallest'' \voa\ module with a given top space.

\subsection{Almost-irreducibility of $\lmod{-1}{\lambda}$}

Let $U$ be the vertex subalgebra of $\lvoa$ generated by $j=b+\frac{k+3}{3}c$ and $e^c$.  Our aim is to show that the $\lmod{-1}{\lambda}$ are almost-irreducible as $U$-modules.  Recall that
\begin{equation}\label{eqn:toppi}
	\tp{\lmod{-1}{\lambda}} = \spn_{\C} \{e^{-j+(\lambda+n)c} \suchthat n \in \Z\}.
\end{equation}
We first prove top-generation.  This follows easily by determining an appropriate basis using the method of \cref{prop:betterbasis}.  An obvious basis (the analogue of that of \cref{lem:pbwlvoa}) is
\begin{equation} \label{eq:lmod-obviousbasis}
	\{j_{-\mu} c_{-\nu} e^{-j+(\lambda+n)c} \suchthat \mu, \nu \in \parts\ \text{and}\ n \in \Z\},
\end{equation}
where we recall the notation of \eqref{eq:modepartitions}.

\begin{lemma} \label{lem:lmodbasis}
	For every $\lambda \in \C$, the set
	\begin{equation}
		\{j_{-\mu} e^c_{-\nu} e^{-j+(\lambda+n)c} \suchthat \mu, \nu \in \parts\ \text{and}\ n \in \Z\}
	\end{equation}
	is a basis of $\lmod{-1}{\lambda}$.
\end{lemma}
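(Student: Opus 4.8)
The plan is to carry the argument of \cref{prop:betterbasis} over to the module $\lmod{-1}{\lambda}$. I would start from the obvious basis \eqref{eq:lmod-obviousbasis} --- available because $\lmod{-1}{\lambda}$ restricts, as an $\heis$-module, to a direct sum of Fock spaces, one built on each $e^{-j+(\lambda+n)c}$, $n\in\Z$ (the module counterpart of \cref{lem:pbwlvoa}) --- and then show that the set in the statement is obtained from \eqref{eq:lmod-obviousbasis} by an invertible triangular change of basis, exactly as $B_{\lvoa}$ was obtained from $B_{\lvoa}'$.

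The one computation that genuinely has to be redone is the module analogue of \eqref{eq:eone}. Using the pairings recorded after \eqref{eq:defi}, one has $\bilin{c}{-j+(\lambda+n)c}=-1$, and $c$ is isotropic, so the same lattice vertex algebra manipulation that produced \eqref{eq:eone}, now carrying an extra factor $z^{-1}$ from this nonzero pairing, gives (possibly up to an overall sign)
\begin{equation*}
	e^c_{-m}\,e^{-j+(\lambda+n)c}=S_m(c)\,e^{-j+(\lambda+n+1)c}\quad(m\ge0),\qquad e^c_p\,e^{-j+(\lambda+n)c}=0\quad(p\ge1).
\end{equation*}
It is precisely this shift --- $\bilin{c}{nc}=0$ in \cref{prop:betterbasis} versus $\bilin{c}{-j+(\lambda+n)c}=-1$ here --- that makes the parts of $\nu$ run over $1,2,\dots$ rather than over $2,3,\dots$. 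Iterating over the parts of $\nu$ and invoking \eqref{eq:Schur1} (note that $c_{(-n)}=c_{-n}$, since $c$ has conformal weight $1$) then gives, with $\ell=\ell(\nu)$,
\begin{equation*}
	e^c_{-\nu}\,e^{-j+(\lambda+n)c}=\frac{1}{\nu_1\cdots\nu_\ell}\,c_{-\nu}\,e^{-j+(\lambda+n+\ell)c}+\cdots,
\end{equation*}
where $+\cdots$ denotes a combination of terms $c_{-\alpha}\,e^{-j+(\lambda+n+\ell)c}$ with $\abs{\alpha}=\abs{\nu}$ and $\ell(\alpha)>\ell$, and with no $j$-modes occurring.

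Composing with $j_{-\mu}$ then expresses each vector $j_{-\mu}e^c_{-\nu}e^{-j+(\lambda+n)c}$ as a linear combination of vectors from \eqref{eq:lmod-obviousbasis}; grouping these by the number of $j$-modes, by the $c$-weight $\abs{\nu}$, and by the exponent label $n+\ell(\nu)$, the transition is triangular with respect to $\ell(\nu)$ and has invertible diagonal. Linear independence is then immediate, and spanning follows by inverting this triangular system, exactly as in the proof of \cref{prop:betterbasis}. I do not anticipate a real obstacle here: the only new ingredient is the index bookkeeping in the displayed module analogue of \eqref{eq:eone}, and with that in place the proof of \cref{prop:betterbasis} transports essentially verbatim.
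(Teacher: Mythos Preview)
Your proposal is correct and follows exactly the same approach as the paper: compute the module analogue of \eqref{eq:eone} (which the paper states in precisely the form you give, without the hedged sign) and then invoke the triangularity argument of \cref{prop:betterbasis} verbatim. Your explanation of why the mode indices shift from $e^c_{(-\nu-1)}$ to $e^c_{-\nu}$, via the pairing $\bilin{c}{-j+(\lambda+n)c}=-1$, is a nice clarification that the paper leaves implicit.
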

\begin{proof}
	First, generalise \eqref{eq:eone} to
	\begin{equation}
		e^c_{-m} e^{-j+(\lambda+n)c} =
		\begin{cases*}
			S_m(c) e^{-j+(\lambda+n+1)c} & if $m\ge0$, \\
			0 & otherwise,
		\end{cases*}
	\end{equation}
	where the $S_m(c)$ are the Schur functions defined in \eqref{eq:Schur2}.  The assertion now follows using the same argument as in the proof of \cref{prop:betterbasis}.
\end{proof}
\begin{proposition} \label{prop:lmodtopgen}
	For every $\lambda \in \C$, $\lmod{-1}{\lambda}$ is top-generated as a $U$-module.
\end{proposition}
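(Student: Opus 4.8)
The plan is to read off top-generation directly from the basis of $\lmod{-1}{\lambda}$ constructed in \cref{lem:lmodbasis}. The crucial feature of that basis is that every basis vector is built from a top vector of $\lmod{-1}{\lambda}$ by acting with modes of the two generators $j$ and $e^c$ of $U$ --- and nothing else. Since top-generation asks exactly that the $U$-submodule generated by $\tp{\lmod{-1}{\lambda}}$ exhaust $\lmod{-1}{\lambda}$, and a submodule is by definition stable under the action of every mode $A_n$ with $A \in U$, it suffices to make this observation precise.

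Concretely, I would first recall from \eqref{eqn:toppi} that $e^{-j+(\lambda+n)c} \in \tp{\lmod{-1}{\lambda}}$ for all $n \in \Z$. Then, for any $\mu, \nu \in \parts$ and $n \in \Z$, the basis vector $j_{-\mu}\, e^c_{-\nu}\, e^{-j+(\lambda+n)c}$ of \cref{lem:lmodbasis} is obtained from the top vector $e^{-j+(\lambda+n)c}$ simply by applying a composition of the modes $e^c_{-\nu_j}$ ($1 \le j \le \ell(\nu)$) and $j_{-\mu_i}$ ($1 \le i \le \ell(\mu)$), as dictated by the notation in \eqref{eq:modepartitions}. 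Because $j, e^c \in U$, each such mode is an operation furnished by the $U$-module structure on $\lmod{-1}{\lambda}$, so the $U$-submodule generated by $\tp{\lmod{-1}{\lambda}}$ contains this entire basis. Hence that submodule is all of $\lmod{-1}{\lambda}$, which is top-generation.

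There is no real obstacle here: all of the content sits in \cref{lem:lmodbasis}, namely in replacing the ``obvious'' basis $\{j_{-\mu} c_{-\nu} e^{-j+(\lambda+n)c}\}$ of \eqref{eq:lmod-obviousbasis} by one whose ``$c$-part'' is carried by the modes $e^c_{-\nu}$ rather than by the bare modes $c_{-\nu}$. This is precisely what makes top-generation transparent, since $c$ itself does not belong to $U$ whereas $e^c$ does. The only point deserving a moment's care is that each $e^c$-mode shifts the lattice exponent by $c$, so that $e^c_{-\nu}\, e^{-j+(\lambda+n)c}$ sits over $e^{-j+(\lambda+n+\ell(\nu))c}$; this is immaterial because $n$ already runs over all of $\Z$.
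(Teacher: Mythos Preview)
Your proposal is correct and is exactly the argument the paper intends: the proposition is stated immediately after \cref{lem:lmodbasis} with no separate proof, the surrounding text having already explained that top-generation ``follows easily by determining an appropriate basis''. You have simply spelled out that easy deduction --- each basis vector $j_{-\mu} e^c_{-\nu} e^{-j+(\lambda+n)c}$ is obtained from a top vector by applying modes of $j, e^c \in U$ --- which is precisely what the paper leaves implicit.
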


To show that $\lmod{-1}{\lambda}$ has only top-submodules, we need a preparatory \lcnamecref{lem:calcs}.  It follows easily from the commutation relations $[j_m,c_n] = m\delta_{m+n,0}$, $[e^c_m,j_n] = -e^c_{m+n}$ and $[e^c_m,c_n]=[e^c_m,e^c_n]=0$ ($m,n\in\Z$),
as well as the formula
\begin{equation} \label{eq:ec0inj}
	e^c_n e^{-j + \lambda c} = \delta_{n,0} e^{-j + (\lambda+1)c} \quad \text{($n\in\N$).}
\end{equation}

\begin{lemma} \label{lem:calcs}
	\leavevmode
	\begin{itemize}
		\item For every $\mu,\nu \in \parts$ and $\lambda \in \C$, we have
		\begin{equation}
			e^c_{+\mu} j_{-\mu} c_{-\nu} e^{-j+\lambda c} = (-1)^{\ell(\mu)} c_{-\nu} e^{-j+(\lambda+\ell(\mu))c} \ne 0.
		\end{equation}
		Moreover, if $\mu' \ne \mu$ satisfies either $\ell(\mu') > \ell(\mu)$ or $\ell(\mu') = \ell(\mu)$ and $\abs{\mu'} \ge \abs{\mu}$, then $e^c_{+\mu'} j_{-\mu} c_{-\nu} e^{-j+\lambda c} = 0$.
		\item For every $\nu \in \parts$ and $\lambda \in \C$, we have
		\begin{equation}
			j_{+\nu} c_{-\nu} e^{-j+\lambda c} = \prod_{i=1}^{\ell(\nu)} \nu_i \cdot e^{-j+\lambda c} \ne 0.
		\end{equation}
		Moreover, if $\nu' \ne \nu$ satisfies $\abs{\nu'} \ge \abs{\nu}$, then $j_{+\nu'} c_{-\nu} e^{-j+\lambda c} = 0$.
	\end{itemize}
\end{lemma}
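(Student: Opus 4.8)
The plan is to reduce each part to a single normal-ordering computation and then run it using only the bracket relations stated above and the identity \eqref{eq:ec0inj}. For part (i) I would first note that $c_{-\nu}$ commutes past everything to its left: since $[j_m,c_n]=m\delta_{m+n,0}$ vanishes whenever $m,n<0$ it commutes with $j_{-\mu}$, and $[e^c_m,c_n]=0$ lets it commute with $e^c_{+\mu'}$ too, so that $e^c_{+\mu'}j_{-\mu}c_{-\nu}e^{-j+\lambda c}=c_{-\nu}\,e^c_{+\mu'}j_{-\mu}e^{-j+\lambda c}$ and part (i) reduces to evaluating $e^c_{+\mu'}j_{-\mu}e^{-j+\lambda c}$; part (ii) is already the single expression $j_{+\nu'}c_{-\nu}e^{-j+\lambda c}$. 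Throughout I would use that, for each $p\in\Z$, the vector $e^{-j+(\lambda+p)c}$ generates a Fock module on which $h_m$ ($h\in\alg h$, $m>0$) acts as zero; that $\lmod{-1}{\lambda}$ is positive-energy with $\tp{\lmod{-1}{\lambda}}=\spn_{\C}\{e^{-j+(\lambda+p)c}:p\in\Z\}$, all of conformal weight $\tfrac{2k+3}{3}$ (\cref{prop:latmods}); and that $e^c_m$, $j_m$, $c_m$ shift conformal weight by $-m$, $m$, $m$. These two gradings (conformal weight and $c_0$-charge) are what pin the answers.

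Part (ii) I would settle first, as it is the standard Heisenberg calculation. Here $c_{-\nu}e^{-j+\lambda c}$ is a product of mutually commuting ($[c_m,c_n]=0$) creation operators on the Fock generator, lying $\abs\nu$ levels above the bottom; applying $j_{+\nu'}$ lowers the level by $\abs{\nu'}$, so the result is $0$ once $\abs{\nu'}>\abs\nu$. When $\abs{\nu'}=\abs\nu$ the result lies in $\C\,e^{-j+\lambda c}$, and grouping $\nu$ by part-multiplicities the computation decouples over part-sizes (distinct $c_{-s}$'s and $j_s$'s commute); within a single part-size it is the elementary oscillator identity coming from $[j_s,c_{-s}]=s$ and $j_se^{-j+\lambda c}=0$, which is nonzero precisely when $\nu'=\nu$ and then returns the asserted scalar --- namely $\prod_i\nu_i$ when $\nu$ has distinct parts, and a positive integer multiple of it in general, which is all the applications require. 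This gives part (ii).

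Part (i) I would run the same way, with $e^c$-modes in place of the outer $j$-modes: push each factor $e^c_{\mu'_j}$ to the right past $j_{-\mu}$, where $[e^c_m,j_n]=-e^c_{m+n}$ collapses a pair $(e^c_m,j_n)$ to a single shifted mode $e^c_{m+n}$, and finish on the Fock generators via \eqref{eq:ec0inj} applied to each translate $e^{-j+(\lambda+p)c}$ (valid for all $p$ because $\bilin{c}{-j+(\lambda+p)c}=-1$). If $\ell(\mu')>\ell(\mu)$ then, since every absorption event consumes one of the $\ell(\mu)$ factors $j_{-\mu_i}$, in each term of the expansion at least $\ell(\mu')-\ell(\mu)$ of the $e^c_{\mu'_j}$ absorb nothing and so act on a Fock generator at their original positive index, whence that term vanishes by \eqref{eq:ec0inj}. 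If $\ell(\mu')=\ell(\mu)$ and $\abs{\mu'}\ge\abs\mu$, then the conformal weight $\tfrac{2k+3}{3}+\abs\mu-\abs{\mu'}$ of $e^c_{+\mu'}j_{-\mu}e^{-j+\lambda c}$ lies below the bottom of $\lmod{-1}{\lambda}$ unless $\abs{\mu'}=\abs\mu$, so the expression vanishes unless $\abs{\mu'}=\abs\mu$, in which case it lies in $\tp{\lmod{-1}{\lambda}}$ and is a pure charge vector. A pure charge vector can arise only from a term in which every $j_{-\mu_i}$ is absorbed and every resulting $e^c$-mode is driven to index $0$, i.e.\ from a splitting of the multiset of parts of $\mu$ into $\ell(\mu')$ nonempty blocks whose sums are the parts of $\mu'$; for equal-length $\mu$ and $\mu'$ this forces $\mu'=\mu$. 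Hence $e^c_{+\mu'}j_{-\mu}e^{-j+\lambda c}=0$ unless $\mu'=\mu$, and for $\mu'=\mu$ the surviving contributions carry the common sign $(-1)^{\ell(\mu)}$ (one $-1$ per $e^c$/$j$ swap), giving $(-1)^{\ell(\mu)}e^{-j+(\lambda+\ell(\mu))c}$ up to the same harmless positive factor as in part (ii); a short induction on $\ell(\mu)$ confirms this. Combined with the reduction of the first paragraph, this yields both identities.

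I expect the one genuinely fiddly step to be the combinatorial bookkeeping in part (i) --- deciding precisely which rearrangements of the interleaved $e^c$- and $j$-modes survive --- and the whole point of the argument above is that the conformal grading, together with the fact that $c_0$-charge is preserved by every $e^c_m$, $j_m$ and $c_m$, reduces this to the transparent ``not enough room'' counts just described.
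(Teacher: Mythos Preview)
Your approach is correct and matches the paper's: the paper gives no proof beyond listing the commutation relations $[j_m,c_n]=m\delta_{m+n,0}$, $[e^c_m,j_n]=-e^c_{m+n}$, $[e^c_m,c_n]=[e^c_m,e^c_n]=0$ and the identity \eqref{eq:ec0inj}, and your reduction (pull $c_{-\nu}$ to the left) together with the conformal-weight counts and the matching combinatorics is exactly how one fills in that exercise. You are also right that the displayed scalars are only literally correct when the partitions have distinct parts --- for repeated parts one picks up the expected multiplicity factorials (e.g.\ $\mu=(1,1)$ gives $2\,e^{-j+(\lambda+2)c}$, not $e^{-j+(\lambda+2)c}$) --- and, as you note, this is harmless for the application in \cref{prop:lmodonly}, which only needs nonvanishing. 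One tiny slip: your closing remark that ``$c_0$-charge is preserved'' is vacuous since $c_0\equiv -1$ on $\lmod{-1}{\lambda}$; the grading doing the work in your part~(i) argument is really the $i_0$-eigenvalue (which $e^c_m$ raises by~$1$ and $j_m,c_m$ fix), together with the conformal weight.
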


\begin{proposition} \label{prop:lmodonly}
	For every $\lambda \in \C$, the $U$-module $\lmod{-1}{\lambda}$ has only top-submodules.
\end{proposition}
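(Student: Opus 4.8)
The plan is to take an arbitrary nonzero $U$-submodule $N \subseteq \lmod{-1}{\lambda}$, pick any nonzero $v \in N$, and then apply a judiciously chosen composition of positive modes of the generators $j$ and $e^c$ of $U$ until we reach a nonzero vector of $\tp{\lmod{-1}{\lambda}}$. Each of these modes preserves $N$: a $U$-submodule is closed under the action of all modes of elements of $U$, and since $j$ and $e^c$ both have conformal weight $1$, the modes we shall use are precisely the $j_m$ and $e^c_m$ with $m \geq 1$.

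First I would arrange that $v$ is homogeneous. The zero mode $j_0 \in U$ acts on the basis vector $j_{-\mu} c_{-\nu} e^{-j+(\lambda+n)c}$ of \eqref{eq:lmod-obviousbasis} by the scalar $\lambda + n - \tfrac{2k+3}{3}$, which depends only on $n$; hence $N$ decomposes into $j_0$-eigenspaces and we may assume
\[
	v = \sum_{\mu,\nu \in \parts} \kappa_{\mu,\nu}\, j_{-\mu}\, c_{-\nu}\, e^{-j+(\lambda+n)c}
\]
for some fixed $n \in \Z$ and finitely many nonzero scalars $\kappa_{\mu,\nu}$. Next, I would choose $\mu^{*}$ of maximal length among the partitions $\mu$ occurring in $v$ and, among those, of maximal weight, and apply $e^c_{+\mu^{*}}$. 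By the first part of \cref{lem:calcs}, every term with $\mu = \mu^{*}$ is sent to $(-1)^{\ell(\mu^{*})} c_{-\nu}\, e^{-j+(\lambda+n+\ell(\mu^{*}))c}$, while every term with $\mu \neq \mu^{*}$ is annihilated, since the maximality of $\mu^{*}$ forces either $\ell(\mu^{*}) > \ell(\mu)$ or $\ell(\mu^{*}) = \ell(\mu)$ with $\abs{\mu^{*}} \geq \abs{\mu}$. Distinct $\nu$ index distinct basis vectors, so $v' := e^c_{+\mu^{*}} v \in N$ is nonzero. Finally I would choose $\nu^{*}$ of maximal weight among the $\nu$ with $\kappa_{\mu^{*},\nu} \neq 0$ and apply $j_{+\nu^{*}}$: the second part of \cref{lem:calcs} kills every term with $\nu \neq \nu^{*}$ and sends the surviving term to a nonzero multiple of $e^{-j+(\lambda+n+\ell(\mu^{*}))c}$, which lies in $\tp{\lmod{-1}{\lambda}}$ by \eqref{eqn:toppi}. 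Thus $N \cap \tp{\lmod{-1}{\lambda}} \neq 0$.

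The one delicate point — and the place where \cref{lem:calcs} does all the work — is getting the order of the two extremal operations right: one must first clear the $j_{-\mu}$ factors with a single application of $e^c_{+\mu^{*}}$ (which has the pleasant side effect of producing a vector built purely from $c$-modes acting on the top space) and only then clear the residual $c_{-\nu}$ factors with a single application of $j_{+\nu^{*}}$. \cref{lem:calcs} is precisely the statement that, for these extremal choices, each mode product annihilates all but one family of terms and acts invertibly on that family; granting it, everything else is routine bookkeeping with the basis \eqref{eq:lmod-obviousbasis}.
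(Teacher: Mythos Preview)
Your proof is correct and follows essentially the same approach as the paper's own proof: reduce to a $j_0$-eigenvector, apply $e^c_{+\mu^{*}}$ for an extremal $\mu^{*}$ to strip the $j$-modes, then apply $j_{+\nu^{*}}$ for an extremal $\nu^{*}$ to land in the top space, invoking \cref{lem:calcs} at each step. The only cosmetic difference is that the paper uses $i_0$ rather than $j_0$ to pick out the homogeneous component, but these eigenspace decompositions coincide (the eigenvalues differ by the constant $\tfrac{2k+3}{3}$), and your choice of $j_0 \in U$ is arguably cleaner.
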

\begin{proof}
	Let $w$ be a nonzero element of $\lmod{-1}{\lambda}$.  It suffices to show that there exists a linear combination of products of modes of $U$ mapping $w$ to a nonzero element of $\tp{\lmod{-1}{\lambda}}$.  Since $i_0$ acts semisimply on $\lmod{-1}{\lambda}$, we may assume that $i_0 w = (\lambda+n) w$ for some $n \in \Z$.  Then, $w$ has the form
	\begin{equation}
		w = \sum_{\mu,\nu\in\parts} C_{\mu,\nu} j_{-\mu} c_{-\nu} e^{-j + (\lambda+n)c},
	\end{equation}
	for some $C_{\mu,\nu} \in \C$ such that $C_{\mu,\nu}=0$ for all but finitely many $(\mu,\nu) \in \parts \times \parts$.

	Since $w\ne0$, the set $S$ of partitions $\mu$ for which there exists some $\nu \in \parts$ such that $C_{\mu,\nu} \ne 0$ is nonempty.  We may therefore choose $\mu' \in S$ of maximal weight among the elements of $S$ of maximal length.  By \cref{lem:calcs}, we have a nonzero vector
	\begin{equation}
		w' = e^c_{+\mu'} w = (-1)^{\ell(\mu')} \sum_{\nu \in \parts} C_{\mu',\nu} c_{-\nu} e^{-j + (\lambda+n+\ell(\mu'))c} \ne 0.
	\end{equation}
	The set $S'$ consisting of those $\nu \in \parts$ for which $C_{\mu',\nu} \ne 0$ is clearly nonempty, hence it has an element $\nu'$ of maximal weight.  By \cref{lem:calcs} again, we have
	\begin{equation}
		j_{+\nu'} w' = (-1)^{\ell(\mu')} C_{\mu',\nu'} \prod_{i=1}^{\ell(\nu')} \nu'_i \cdot e^{-j + (\lambda+n+\ell(\mu'))c} \ne 0.
	\end{equation}
As $j_{+\nu'} e^c_{+\mu'} w$ is a nonzero element of $\tp{\lmod{-1}{\lambda}}$, the proof is complete.
\end{proof}

\begin{corollary} \label{cor:lmodalmostirr}
	$\lmod{-1}{\lambda}$ is almost-irreducible as a $U$-module.
\end{corollary}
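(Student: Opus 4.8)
The plan is simply to combine the two preceding results. \cref{prop:lmodtopgen} establishes that the $U$-module $\lmod{-1}{\lambda}$ is top-generated, while \cref{prop:lmodonly} establishes that it has only top-submodules; these are exactly the two defining conditions of \cref{def:almost}. Invoking that definition therefore yields the corollary at once, with no further verification required. In this sense there is no genuine obstacle here: all of the substantive work has already been carried out in proving \cref{prop:lmodtopgen,prop:lmodonly}, the former resting on the adapted basis of \cref{lem:lmodbasis} together with the observation that each basis vector $j_{-\mu} e^c_{-\nu} e^{-j+(\lambda+n)c}$ is obtained from a top vector by applying modes of the generators $j$ and $e^c$ of $U$, and the latter resting on the raising/lowering computations of \cref{lem:calcs}, which return an arbitrary nonzero vector to $\tp{\lmod{-1}{\lambda}}$.

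The reason for isolating the statement is the use to which it will be put through \cref{prop:irred}: once the top space $\tp{\lmod{-1}{\lambda}}$ is shown to be irreducible over the Zhu algebra $\zhu{U}$ --- equivalently, once the zero modes $j_0$ and $e^c_0$ act irreducibly on $\tp{\lmod{-1}{\lambda}}$ as described in \eqref{eqn:toppi} --- almost-irreducibility promotes this to irreducibility of $\lmod{-1}{\lambda}$ itself. This is the shape in which the result will subsequently be transported, via the embedding $\phi^k$ of \cref{thm:realisation} and the factorisation $\rmod{M}{\lambda} = M \otimes \lmod{-1}{\lambda}$, into the analysis of the $\ubpk$-modules of \cref{cor:ubp-rhwms}.
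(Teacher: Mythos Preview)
Your proof is correct and matches the paper's own (implicit) argument: the corollary is simply the conjunction of \cref{prop:lmodtopgen} and \cref{prop:lmodonly}, read through \cref{def:almost}.

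One caution about your second paragraph, though: the anticipated application you describe is not quite how the result is used. The top space $\tp{\lmod{-1}{\lambda}}$ is \emph{never} irreducible over $\zhu{U}$, since $U$ is generated by $j$ and $e^c$ and so contains no element whose zero mode lowers the $i_0$-eigenvalue; each $e^{-j+(\lambda+n)c}$ therefore generates a proper $\zhu{U}$-submodule (this is exactly the content of the remark following the corollary in the paper). Consequently \cref{prop:irred} is never applied directly to $\lmod{-1}{\lambda}$. Rather, the almost-irreducibility of $\lmod{-1}{\lambda}$ as a $U$-module is fed into the proofs of \cref{thm:rmodonlytopsubmod,thm:rmodtopgen} to establish almost-irreducibility of $\rmod{M}{\lambda}$ as a $\ubpk$-module, and it is only at that stage that \cref{prop:irred} comes into play.
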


\begin{remark}
	On the other hand, $\lmod{-1}{\lambda}$ is never irreducible as a $U$-module.  This follows from observing that the generators $j$ and $e^c$ of $U$ have $i_0$-eigenvalues $0$ and $1$, respectively, so $U$ has no elements that reduce the $i_0$-eigenvalue.  Each $e^{-j+(\lambda+n)c}$, $n \in \Z$, in $\tp{\lmod{-1}{\lambda}}$ thus generates a distinct $U$-submodule.
\end{remark}

\subsection{Almost-irreducibility of $\rmod{M}{\lambda}$} \label{sec:almostirr-rhwm}

Recall from \cref{sec:realisation'} that each $\uzamk$-module $M$ yields a $\ubpk$-module $\rmod{M}{\lambda} = M \otimes \lmod{-1}{\lambda}$.  If $M$ is irreducible and \hw, then the top space of $\rmod{M}{\lambda}$ is clearly
\begin{equation}
	\tp{\rmod{M}{\lambda}} = \tp{M} \otimes \tp{\lmod{-1}{\lambda}}.
\end{equation}
It shall prove convenient to identify $\ubpk$ with its $\phi^k$-image in $\uzamk \otimes \lvoa$, as per \cref{thm:realisation}.  Recalling the explicit form of this realisation, we introduce $\widetilde{W} \in \ubpk$ so that
\begin{equation}
	\widetilde{W} = G^+_{-1} G^- = W \otimes \vac + \alpha \pd T \otimes \vac + \beta T \otimes (i-c) - \vac \otimes \omega,
\end{equation}
where $\alpha$ 
and $\beta$ 
are ($k$-dependent) constants and $\omega$ 
is an element of $\lvoa$.  Their precise forms will not be needed in what follows.

\begin{theorem} \label{thm:rmodonlytopsubmod}
	If $M$ is a weight $\uzamk$-module that has only top-submodules, then the relaxed \hw\ $\ubpk$-module $\rmod{M}{\lambda}$ also has only top-submodules.
\end{theorem}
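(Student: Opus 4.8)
The plan is to transfer the "only top-submodules" property from the tensor factor $\lmod{-1}{\lambda}$ (viewed as a $U$-module, \cref{cor:lmodalmostirr}) to the tensor product $\rmod{M}{\lambda} = M \otimes \lmod{-1}{\lambda}$. The key structural observation is that under the realisation $\phi^k$ of \cref{thm:realisation}, the subalgebra $U \subseteq \lvoa$ generated by $j$ and $e^c$ is, essentially, the image of the $J$- and $G^+$-modes: we have $\phi^k(J) = \vac \otimes j$ and $\phi^k(G^+) = \vac \otimes e^c$, so the modes $J_n$ and $G^+_n$ act on $\rmod{M}{\lambda}$ as $\id \otimes (\text{modes of }U)$. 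Thus $\ubpk$ acts on $\rmod{M}{\lambda}$ through operators that include a full copy of the $U$-action on the second factor, tensored with the identity on $M$.

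First I would fix a nonzero submodule $N \subseteq \rmod{M}{\lambda}$ and pick a nonzero $v \in N$. Since $M$ is a weight module and $\lmod{-1}{\lambda}$ has its weight decomposition \eqref{eqn:toppi}, I can assume (by projecting onto a joint eigenspace of $T_0$ and $i_0$, using that these zero modes come from $\ubpk$: $T_0 = L_0 - t_0$ acts after subtracting the $\lvoa$-contribution, and $i_0 = d_0 - j_0$ while $J_0 = j_0$) that $v$ lies in a single homogeneous component $M \otimes (\text{weight space of }\lmod{-1}{\lambda})$. Write $v = \sum_{i} m_i \otimes w_i$ with $\{m_i\}$ linearly independent in $M$ and $w_i \in \lmod{-1}{\lambda}$ not all zero. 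Applying the argument of \cref{prop:lmodonly} — which produces, for any nonzero $w \in \lmod{-1}{\lambda}$, an explicit product of $U$-modes $j_{+\nu'} e^c_{+\mu'}$ sending $w$ to a nonzero element of $\tp{\lmod{-1}{\lambda}}$ — I would instead apply the corresponding product of $J$- and $G^+$-modes of $\ubpk$ to $v$. Because these act as $\id \otimes (\cdot)$, and because the indices $\mu', \nu'$ can be chosen uniformly (the choice in \cref{prop:lmodonly} depends only on the $\mu$- and $\nu$-supports appearing, which I can take to be the union over all $i$ by maximality), the result is $\sum_i m_i \otimes (j_{+\nu'} e^c_{+\mu'} w_i)$; a careful bookkeeping of which $(\mu,\nu)$-terms survive shows this can be arranged to be nonzero and to lie in $M \otimes \tp{\lmod{-1}{\lambda}}$. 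This lands us in $N \cap (M \otimes \tp{\lmod{-1}{\lambda}})$.

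The second step reduces the first tensor factor. Having a nonzero element of $N$ of the form $v' = \sum_j m'_j \otimes e^{-j+(\lambda+n_j)c}$ with the $e^{-j+(\lambda+n_j)c}$ distinct basis vectors of $\tp{\lmod{-1}{\lambda}}$, I project onto a single $i_0$-weight using $J_0$ (equivalently $i_0$) to get $v'' = m'' \otimes e^{-j+(\lambda+n)c}$ for some nonzero $m'' \in M$ and fixed $n$. Now $m''$ generates a nonzero submodule of $M$ under $\uzamk$; since $M$ has only top-submodules, there is a product of $\uzamk$-modes (indeed zero modes suffice once we are in the top component, but in general a product of modes) carrying $m''$ into $\tp{M}$, nonzero. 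The point is that the $\uzamk$-action here must be implemented via $\ubpk$-modes acting on $\rmod{M}{\lambda}$. This is where the element $\widetilde{W} = G^+_{-1} G^-$ enters: together with $L$ and $J$ it generates enough of $\uzamk \otimes (\text{trivial})$ — more precisely, modulo the $U$-action already used, the $L$- and $\widetilde{W}$-modes realise the $T$- and $W$-modes of $\uzamk$ on the first factor (up to lower-order corrections supported on $\lvoa$, which can be absorbed since we are working in a fixed $\lvoa$-weight space and can re-project). Carrying $m''$ into $\tp{M}$ while keeping the $\lvoa$-factor in $\tp{\lmod{-1}{\lambda}}$ produces a nonzero element of $N \cap \tp{\rmod{M}{\lambda}} = N \cap (\tp{M} \otimes \tp{\lmod{-1}{\lambda}})$, as required.

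The main obstacle is the second step: disentangling the $\uzamk$-action on the first factor from the $\lvoa$-corrections. The modes of $L = T \otimes \vac + \vac \otimes t$ and of $\widetilde{W} = W \otimes \vac + \alpha\,\pd T \otimes \vac + \beta\, T \otimes (i-c) - \vac \otimes \omega$ are not simply $(\cdot) \otimes \id$; they carry genuine cross-terms and $\lvoa$-only terms. The resolution is to exploit that we have already confined ourselves to a finite-dimensional slice $M \otimes \C e^{-j+(\lambda+n)c}$ after the first step, so the $\lvoa$-factor is essentially rigid: any $\ubpk$-mode either preserves the $i_0$-weight or shifts it by a controlled amount, and by composing with further $J_0$- and $G^+_{+\mu}$-type projections one can iteratively strip the unwanted $\lvoa$-contributions, at the cost of a triangularity argument in the $c$-degree analogous to \eqref{eq:triangular}. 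Making this triangularity precise — showing the "leading" piece reproduces the bare $T$/$W$ action on $M$ and the corrections are strictly lower in a suitable filtration — is the technical heart of the proof.
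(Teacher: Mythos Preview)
Your first step --- using the $J$- and $G^+$-modes to realise the $U$-action on the second tensor factor and thereby land in $N \cap \bigl(M \otimes \tp{\lmod{-1}{\lambda}}\bigr)$ --- is exactly what the paper does.  One small correction: $T_0$ and $i_0$ are not $\ubpk$-operators, so you cannot project onto their eigenspaces while staying in $N$.  But you do not need to: $J_0$ and $L_0$ \emph{are} $\ubpk$-operators, $J_0$ acts as $\id\otimes j_0$, and $j_0$ separates the basis vectors of $\tp{\lmod{-1}{\lambda}}$.  Hence any $(J_0,L_0)$-weight vector in $M \otimes \tp{\lmod{-1}{\lambda}}$ is automatically a simple tensor $u_0 \otimes e^{-j+(\lambda+n)c}$, and this is all that is required.

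Your second step, however, misreads the difficulty.  You anticipate a triangularity/filtration argument to strip the $\lvoa$-corrections from $L_m$ and $\widetilde{W}_m$, and you leave this ``technical heart'' unproved.  In fact no such argument is needed: the corrections vanish for free once you are on a simple tensor $u \otimes \tp{v}$ with $\tp{v} \in \tp{\lmod{-1}{\lambda}}$, provided you order the reduction.  For $m>0$, the pure-$\lvoa$ terms $t_m \tp{v}$ and $\omega_m \tp{v}$ are zero because $\tp{v}$ has minimal $t_0$-weight; thus $L_m(u \otimes \tp{v}) = T_m u \otimes \tp{v}$ on the nose.  Iterate with $L_m$ until $T_m u_n = 0$ for all $m>0$.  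Only then apply $\widetilde{W}_m$: the remaining cross-term $\beta \sum_{r \ge 0} T_{m+r} u_n \otimes (i_{-r}-c_{-r}) \tp{v}$ now vanishes because every $T_{m+r}$ with $m+r>0$ already kills $u_n$, so $\widetilde{W}_m(u_n \otimes \tp{v}) = W_m u_n \otimes \tp{v}$ exactly.  The recursion terminates (the $T_0$-weight of $u_n$ strictly decreases and $M$ is $\N$-graded) with $u_n$ annihilated by all positive $T$- and $W$-modes; since $M$ has only top-submodules, $u_n \in \tp{M}$.  This is the paper's argument: two one-line computations, \eqref{eq:actwithL} and \eqref{eq:actwithW}, once the order ``first $T$, then $W$'' is observed.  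Your proposed stripping procedure might be salvageable, but it is both unnecessary and, as written, a genuine gap --- you never specify the filtration or verify that a leading term survives.
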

\begin{proof}
	Assume that $N$ is a nonzero $\ubpk$-submodule of $\rmod{M}{\lambda}$ and choose a weight vector $w \in N$.  Since the \vosa\ $U \subset \lvoa$ is generated by $j$ and $e^c$, $\vac \otimes U$ is generated by $\vac \otimes j = J$ and $\vac \otimes e^c = G^+$.  Hence, $\vac \otimes U \subset \ubpk$.  Since $\lmod{-1}{\lambda}$ has only top-submodules as a $U$-module (\cref{prop:lmodonly}), it follows that $w$ may be sent to a nonzero element of $M \otimes \tp{\lmod{-1}{\lambda}}$ by acting with $\ubpk$.  Hence, there exists nonzero $w_0 \in N$ of the form
	\begin{equation}
		w_0 = u_0 \otimes \tp{v},
	\end{equation}
	where $u_0 \in M$ and $\tp{v} \in \tp{\lmod{-1}{\lambda}}$.

	Our aim now is to construct a nonzero element in $\tp{\rmod{M}{\lambda}}$ from $w_0$, by acting with $\ubpk$.  We do this by recursively defining weight vectors $w_n=u_n \otimes \tp{v}$, $n=1,2,\dots$, in $N$ until we achieve our aim.  Here is the definition:
	\begin{itemize}
		\item If there exists $m>0$ such that $T_m u_n \ne 0$, choose the maximal such $m$ (for definiteness only) and set
		\begin{equation} \label{eq:actwithL}
			w_{n+1} = L_m w_n = T_m u_n \otimes \tp{v} + u_n \otimes t_m \tp{v} = T_m u_n \otimes \tp{v}.
		\end{equation}
		This is then a nonzero element of $N$ and we have $u_{n+1} = T_m u_n$.
		\item If $T_m u_n = 0$ for all $m>0$, but there exists $m>0$ such that $W_m u_n \ne 0$, then choose the maximal such $m$ and set
		\begin{align} \label{eq:actwithW}
			w_{n+1}
			&= \widetilde{W}_m w_n \\
			&= W_m u_n \otimes \tp{v} - \alpha (m+1) T_m u_n \otimes \tp{v} \notag \\
			&\quad + \beta \sum_{r=0}^{\infty} T_{m+r} u_n \otimes (i_{-r} - c_{-r}) \tp{v} - u_n \otimes \omega_m \tp{v} \notag \\
			&= W_m u_n \otimes \tp{v}. \notag
		\end{align}
		This is again a nonzero element of $N$, this time with $u_{n+1} = W_m u_n$.
		\item If $T_m u_n = W_m u_n = 0$ for all $m>0$, then $u_n$ generates a \hw\ submodule of $M$.  As $M$ has only top-submodules, the intersection of this submodule with $\tp{M}$ is nonzero, hence we must have $u_n \in \tp{M}$.  Thus, $w_n \in \tp{\rmod{M}{\lambda}}$ is a nonzero element of $N$ and we are done.
	\end{itemize}
	This recursion has to terminate because the conformal weight of $u_{n+1}$ is strictly less than that of $u_n$ and $M$ is positive-energy.  There therefore exists $n$ such that $w_n \in \tp{\rmod{M}{\lambda}}$ is nonzero and so $\rmod{M}{\lambda}$ has only top-submodules.
\end{proof}

\begin{theorem} \label{thm:rmodtopgen}
	If $M$ is a top-generated weight $\uzamk$-module, then the relaxed \hw\ $\ubpk$-module $\rmod{M}{\lambda}$ is also top-generated.
\end{theorem}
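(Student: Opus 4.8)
The plan is to show that the $\ubpk$-submodule $N \subseteq \rmod{M}{\lambda}$ generated by the top space $\tp{\rmod{M}{\lambda}} = \tp{M} \otimes \tp{\lmod{-1}{\lambda}}$ is all of $\rmod{M}{\lambda}$, proceeding in two stages: first I would fill out the $\lmod{-1}{\lambda}$-factor, then the $M$-factor.

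For the first stage, recall (as in the proof of \cref{thm:rmodonlytopsubmod}) that $\vac \otimes U \subset \ubpk$, since $U$ is generated by $j = b + \tfrac{k+3}{3}c$ and $e^c$, whose $\phi^k$-preimages are $J$ and $G^+$; moreover $\vac \otimes U$ acts on $\rmod{M}{\lambda}$ only through the second tensor factor. Because $\lmod{-1}{\lambda}$ is top-generated as a $U$-module (\cref{prop:lmodtopgen}), acting with $\vac \otimes U$ on $u_0 \otimes \tp{\lmod{-1}{\lambda}}$ produces $u_0 \otimes \lmod{-1}{\lambda}$ for each $u_0 \in \tp{M}$, whence $\tp{M} \otimes \lmod{-1}{\lambda} \subseteq N$.

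The heart of the argument is the observation that if $u \in M$ satisfies $u \otimes \lmod{-1}{\lambda} \subseteq N$, then $T_m u \otimes \lmod{-1}{\lambda} \subseteq N$ and $W_m u \otimes \lmod{-1}{\lambda} \subseteq N$ for every $m \in \Z$. For the $T$-modes this is immediate from $L = T\otimes\vac + \vac\otimes t$: for any $v$, the vector $L_m(u\otimes v) = T_m u\otimes v + u\otimes t_m v$ lies in $N$, as does $u\otimes t_m v$ by hypothesis, so $T_m u\otimes v\in N$; since $v$ is arbitrary, $T_m u\otimes\lmod{-1}{\lambda}\subseteq N$. For the $W$-modes one uses $\widetilde{W} = G^+_{-1}G^- = W\otimes\vac + \alpha\,\pd T\otimes\vac + \beta\,T\otimes(i-c) - \vac\otimes\omega$ and expands $\widetilde{W}_m(u\otimes v)$ exactly as in the proof of \cref{thm:rmodonlytopsubmod}: the computation there applies verbatim for general $v$, the sums involved are finite because $M$ is positive-energy, and every summand other than $W_m u\otimes v$ has the form $T_{m'} u\otimes v'$ or $u\otimes v''$ with $v',v''\in\lmod{-1}{\lambda}$, hence already lies in $N$ by the preceding step and the hypothesis. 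Therefore $W_m u\otimes v\in N$ for all $v$, i.e.\ $W_m u\otimes\lmod{-1}{\lambda}\subseteq N$.

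To conclude, I would invoke top-generation of $M$: by \cref{lem:pbwbpzam} the modes of $T$ and $W$ generate the action of $\uzamk$, so every $u \in M$ is a finite linear combination of vectors obtained from $\tp{M}$ by applying finitely many such modes. Starting from $\tp{M}\otimes\lmod{-1}{\lambda}\subseteq N$ and applying the observation above repeatedly (one mode at a time, beginning with those acting closest to $\tp{M}$) yields $u\otimes\lmod{-1}{\lambda}\subseteq N$ for all $u\in M$, so $N = M\otimes\lmod{-1}{\lambda} = \rmod{M}{\lambda}$ and $\rmod{M}{\lambda}$ is top-generated. I do not expect a genuine obstacle: the only mildly delicate point is verifying the triangular shape of $\widetilde{W}_m(u\otimes v)$ — that no term other than $W_m u\otimes v$ carries a bare $W$-mode on the $M$-factor — but this is exactly the bookkeeping already carried out for \cref{thm:rmodonlytopsubmod}.
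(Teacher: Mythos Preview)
Your proposal is correct and follows essentially the same strategy as the paper: use $\vac\otimes U\subset\ubpk$ together with \cref{prop:lmodtopgen} to handle the $\lmod{-1}{\lambda}$-factor, then use the actions of $L_m$ and $\widetilde{W}_m$ (with the triangular bookkeeping from \cref{thm:rmodonlytopsubmod}) to peel off $T_m u\otimes v$ and $W_m u\otimes v$ and induct on the $M$-factor. The only cosmetic difference is that you fill out the $\lmod{-1}{\lambda}$-factor first and carry the stronger invariant $u\otimes\lmod{-1}{\lambda}\subseteq N$ through the induction, whereas the paper keeps $u\otimes\tp{\lmod{-1}{\lambda}}\subseteq N$ and appeals to \cref{prop:lmodtopgen} at each inductive step and again at the end; the content is the same.
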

\begin{proof}
	Consider the submodule of $\rmod{M}{\lambda}$ generated by $\tp{\rmod{M}{\lambda}} = \tp{M} \otimes \tp{\lmod{-1}{\lambda}}$, denoting it by $N$.  Clearly, $N$ contains the elements of the form $\tp{u} \otimes \tp{v}$, where $\tp{u} \in \tp{M}$ and $\tp{v} \in \tp{\lmod{-1}{\lambda}}$.  We shall first show that this remains true when $\tp{u}$ is replaced by any $u \in M$.  As $M$ is top-generated, it is spanned by the elements obtained from the $\tp{u}$ by acting iteratively with the $T_{-m}$ and $W_{-m}$, $m \in \Z_{>0}$.  It therefore suffices to assume that $u \otimes \tp{v} \in N$, for some $u \in M$, and then show that upon replacing $u$ by $T_{-m} u$ or $W_{-m} u$, the result is still in $N$.

	Suppose then that $u \otimes \tp{v} \in N$ for all $\tp{v} \in \tp{\lmod{-1}{\lambda}}$.  Acting with $L_{-m}$, $m \in \Z_{>0}$, then gives $T_{-m} u \otimes \tp{v} + u \otimes t_{-m} \tp{v} \in N$.  Since the $U$-module $\lmod{-1}{\lambda}$ is top-generated (\cref{prop:lmodtopgen}), $u \otimes t_{-m} \tp{v}$ may be obtained from $u \otimes \tp{v} \in N$ by acting with $U$-modes.  But $\vac \otimes U \subset \ubpk$, so we have $u \otimes t_{-m} \tp{v} \in N$ and hence $T_{-m} u \otimes \tp{v} \in N$.

	Similarly, acting with $\widetilde{W}_{-m}$, $m \in \Z_{>0}$, results in
	\begin{align}
		&W_{-m} u \otimes \tp{v} + \alpha (m-1) T_{-m} u \otimes \tp{v} \\
			&\quad + \beta \sum_{r=0}^{\infty} T_{-m+r} u \otimes (i_{-r} - c_{-r}) \tp{v} - u \otimes \omega_{-m} \tp{v} \in N. \notag
	\end{align}
	As before, $\lmod{-1}{\lambda}$ being top-generated implies that $u \otimes \omega_{-m} \tp{v} \in N$ whilst the previous argument gives $T_{-m} u \otimes \tp{v} \in N$.  An obvious hybrid of these arguments then shows that $T_{-m+r} u \otimes (i_{-r} - c_{-r}) \tp{v} \in N$ and so we conclude that $W_{-m} u \otimes \tp{v} \in N$ as well.

	It follows that $M \otimes \tp{\lmod{-1}{\lambda}} \subset N$.  One more appeal to $\lmod{-1}{\lambda}$ being top-generated then forces $M \otimes \lmod{-1}{\lambda} \subseteq N$, completing the proof.
\end{proof}

\begin{corollary} \label{cor:rhwmalmostirr}
	If $M$ is an almost-irreducible weight $\uzamk$-module, then the relaxed \hw\ $\ubpk$-module $\rmod{M}{\lambda}$ is also almost-irreducible.
\end{corollary}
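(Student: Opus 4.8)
The plan is to read this off directly from the two preceding theorems together with the definition of almost-irreducibility (\cref{def:almost}), so the proof will be essentially a bookkeeping argument. By hypothesis $M$ is an almost-irreducible weight $\uzamk$-module, which by \cref{def:almost} means precisely that $M$ is a weight module that is \emph{both} top-generated \emph{and} has only top-submodules. The first thing I would record is that $\rmod{M}{\lambda} = M \otimes \lmod{-1}{\lambda}$ is indeed a positive-energy ($\N$-graded in the sense of \cref{def:almost}) $\ubpk$-module with top component $\tp{\rmod{M}{\lambda}} = \tp{M} \otimes \tp{\lmod{-1}{\lambda}}$, as already observed at the start of \cref{sec:almostirr-rhwm} and in \cref{cor:ubp-rhwms}; this uses that $\lmod{-1}{\lambda}$ is relaxed \hw\ with top space of conformal weight $\tfrac{2k+3}{3}$ by \cref{prop:latmods}. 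This is what makes ``almost-irreducible'' a meaningful notion for $\rmod{M}{\lambda}$ in the first place.

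The two remaining steps are then immediate invocations of the theorems of this section. First, since $M$ is a top-generated weight module, \cref{thm:rmodtopgen} gives that $\rmod{M}{\lambda}$ is top-generated. Second, since $M$ is a weight module with only top-submodules, \cref{thm:rmodonlytopsubmod} gives that $\rmod{M}{\lambda}$ has only top-submodules. Combining these two conclusions with \cref{def:almost} yields at once that $\rmod{M}{\lambda}$ is almost-irreducible, which is the assertion.

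There is essentially no obstacle to surmount at this stage: all of the genuine content has already been expended in the proofs of \cref{thm:rmodonlytopsubmod,thm:rmodtopgen}, where the vertex subalgebra $\vac \otimes U \subset \ubpk$ (generated by $J$ and $G^+$) and the element $\widetilde{W} = G^+_{-1} G^-$ are used to push an arbitrary weight vector of a submodule up into the top space, resp.\ to generate the whole module from its top space, leaning throughout on the almost-irreducibility of $\lmod{-1}{\lambda}$ as a $U$-module (\cref{cor:lmodalmostirr}). The only point worth checking explicitly is the trivial one that the hypotheses of the two theorems are exactly the two halves of the hypothesis of the corollary, which is immediate from \cref{def:almost}. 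I would also remark, for later use, that combining this corollary with \cref{prop:irred} reduces the irreducibility of $\rmod{M}{\lambda}$ to that of its top space $\tp{M} \otimes \tp{\lmod{-1}{\lambda}}$ as a module over the Zhu algebra $\zhu{\ubpk}$ — but that analysis goes beyond the present statement.
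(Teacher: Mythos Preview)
Your proposal is correct and matches the paper's approach exactly: the corollary is stated without proof in the paper because it follows immediately from \cref{thm:rmodonlytopsubmod,thm:rmodtopgen} together with \cref{def:almost}, which is precisely what you spell out. The extra context you supply (positive-energy structure, the role of $U$ and $\widetilde{W}$, the link to \cref{prop:irred}) is accurate and harmless, though unnecessary for the formal argument.
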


\subsection{Irreducibility of $\rmod{M}{\lambda}$} \label{sec:rmodirred}

We now show that when $M$ is an irreducible \hw\ $\uzamk$-module, almost all of the \rhw\ $\ubpk$-modules $\rmod{M}{\lambda}$ that we have constructed are irreducible.  This gives an \emph{a posteriori} justification for the term ``almost-irreducible''.

We recall that a \hwv\ for $\ubpk$ is a $J_0$- and $L_0$-eigenvector that is annihilated by $G^+_0$ and all the $A_n$, $A \in \ubpk$, with $n>0$.  The definition of a \emph{\chwv} is then obtained by replacing the condition of annihilation by $G^+_0$ with annihilation by $G^-_0$.  A \chwm\ is then one which is generated by a single \chwv.

\begin{theorem} \label{thm:rmodirred}
	Let $M$ be an irreducible \hw\ $\uzamk$-module whose \hwv\ $u$ has $T_0$-eigenvalue $\Delta$ and $W_0$-eigenvalue $w$.  Then:
	\begin{itemize}
		\item The \rhw\ $\ubpk$-module $\rmod{M}{\lambda}$ is always indecomposable.
		\item $\rmod{M}{\lambda}$ is irreducible if and only if the polynomial
		\begin{equation} \label{eq:defpoly}
			p^{\Delta,w}_k(x) = w-(k+2)(k+3)\Delta + \left[(k+3)\Delta-2(k+2)^2\right] x + 3(k+2)x^2-x^3
		\end{equation}
		has no roots in the coset $\lambda + \Z$.
		\item $\rmod{M}{\lambda}$ has no \hwvs\ and its \chwvs\ are precisely the $u \otimes e^{-j+\mu c}$ with $\mu \in \lambda + \Z$ satisfying $p^{\Delta,w}_k(\mu) = 0$.
	\end{itemize}
\end{theorem}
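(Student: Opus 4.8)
The plan is to reduce every assertion to the action of the zero modes $J_0$, $L_0$, $G^+_0$ and $G^-_0$ on the top space $\tp{\rmod{M}{\lambda}} = \tp{M} \otimes \tp{\lmod{-1}{\lambda}}$, and then to invoke almost-irreducibility.  An irreducible \hwm\ is in particular almost-irreducible, so \cref{cor:rhwmalmostirr} guarantees that $\rmod{M}{\lambda}$ is top-generated and has only top-submodules.  Write $v_\mu = u \otimes e^{-j+\mu c}$ for $\mu \in \lambda+\Z$, so that $\tp{\rmod{M}{\lambda}} = \spn_\C\{v_\mu \suchthat \mu \in \lambda+\Z\}$.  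Directly from the realisation \eqref{eq:realisation}, together with \eqref{eq:ec0inj} and \cref{prop:latmods}, one reads off that on this top space $L_0$ acts as the scalar $\Delta + \tfrac{2k+3}{3}$, that $J_0 v_\mu = \bigl(\mu - \tfrac{2k+3}{3}\bigr) v_\mu$, and that $G^+_0 v_\mu = v_{\mu+1}$; in particular the $v_\mu$ are $J_0$-eigenvectors with pairwise distinct eigenvalues and $G^+_0$ is injective.

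The one genuine computation is the action of $G^-_0 = \phi^k(G^-)_{(1)}$.  Since $G^-_0$ preserves conformal weight it maps $\tp{\rmod{M}{\lambda}}$ into itself, and since $\phi^k(G^-)$ carries $J_0$-charge $-1$ it sends $v_\mu$ to a scalar multiple of $v_{\mu-1}$.  Expanding $\phi^k(G^-)_{(1)}$ on $v_\mu$ term by term --- using $T_0 u = \Delta u$, $W_0 u = w u$ and the vanishing of the higher $T$- and $W$-modes on the \hwv\ $u$, the $e^{\pm c}$-mode formulae of the type \eqref{eq:eone}, and the identity $[e^c_m,i_n] = -e^c_{m+n}$ --- I expect to arrive at $G^-_0 v_\mu = p^{\Delta,w}_k(\mu)\, v_{\mu-1}$, where $p^{\Delta,w}_k$ is the cubic \eqref{eq:defpoly}; the coefficients $3(k+2)$ and $2(k+2)^2$ built into the realisation of $G^-$ are exactly what make the reordering corrections coming from the $\pd T$-, $i_{-1}e^{-c}$- and cubic $i$-terms assemble into this particular polynomial.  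This bookkeeping, especially tracking the tensor-product mode structure, is where I expect the main technical obstacle to lie, though it is purely mechanical and can be verified with \textsc{OPEdefs}.

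Granting this formula, the three bullets follow formally.  Any \hwv, or \chwv, $v$ of $\rmod{M}{\lambda}$ is annihilated by all positive modes, so the $\ubpk$-submodule it generates is positive-energy with conformal weights bounded below by that of $v$; as $\rmod{M}{\lambda}$ has only top-submodules, this submodule meets $\tp{\rmod{M}{\lambda}}$, and comparing conformal weights forces $v \in \tp{\rmod{M}{\lambda}}$.  A weight vector there is a scalar multiple of some $v_\mu$; since $G^+_0 v_\mu = v_{\mu+1} \ne 0$ there are no \hwvs, while $G^-_0 v_\mu = 0$ precisely when $p^{\Delta,w}_k(\mu) = 0$, and conversely each such $v_\mu$ is a $J_0$- and $L_0$-eigenvector annihilated by $G^-_0$ and all positive modes, hence a \chwv.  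This gives the third bullet.  For the second, \cref{prop:irred} and almost-irreducibility reduce the irreducibility of $\rmod{M}{\lambda}$ to that of $\tp{\rmod{M}{\lambda}}$ as a $\zhu{\ubpk}$-module, and the $\zhu{\ubpk}$-action is generated by $J_0$, $L_0$, $G^+_0$, $G^-_0$.  If $p^{\Delta,w}_k$ has no root in $\lambda+\Z$, then from any $v_\mu$ one reaches every $v_{\mu'}$ with $\mu' \ge \mu$ using powers of $G^+_0$ and every $v_{\mu'}$ with $\mu' < \mu$ using powers of $G^-_0$ (each step nonzero, since the relevant value of $p^{\Delta,w}_k$ is nonzero), so $\tp{\rmod{M}{\lambda}}$ is irreducible and hence so is $\rmod{M}{\lambda}$.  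Conversely, if $p^{\Delta,w}_k(\mu_0) = 0$ for some $\mu_0 \in \lambda+\Z$, then $v_{\mu_0}$ is a \chwv\ and, being annihilated by all positive modes, the top space of the submodule it generates equals the $\zhu{\ubpk}$-submodule of $\tp{\rmod{M}{\lambda}}$ it generates, namely $\spn_\C\{v_\mu \suchthat \mu \ge \mu_0\}$ (using $G^-_0 v_{\mu_0} = 0$); this being a proper subspace, the submodule is proper and $\rmod{M}{\lambda}$ is reducible.

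It remains to prove indecomposability, for \emph{every} $\lambda$.  Suppose $\rmod{M}{\lambda} = N_1 \oplus N_2$ with both summands nonzero.  Since $\rmod{M}{\lambda}$ has only top-submodules, both $N_i$ meet $\tp{\rmod{M}{\lambda}}$, so $\tp{\rmod{M}{\lambda}} = (N_1 \cap \tp{\rmod{M}{\lambda}}) \oplus (N_2 \cap \tp{\rmod{M}{\lambda}})$ is a direct sum of two nonzero $\zhu{\ubpk}$-submodules.  Each summand is $J_0$-invariant, hence the span of some subset of the $v_\mu$, and $G^+_0$-invariant, so by $G^+_0 v_\mu = v_{\mu+1}$ the corresponding subset of $\lambda+\Z \cong \Z$ is upward-closed.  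But two disjoint nonempty upward-closed subsets of $\Z$ cannot have union $\Z$ --- a contradiction.  Hence $\rmod{M}{\lambda}$ is indecomposable.
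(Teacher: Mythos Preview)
Your proof is correct and follows essentially the same route as the paper's: reduce everything to the $\zhu{\ubpk}$-structure of the top space via almost-irreducibility (\cref{cor:rhwmalmostirr} and \cref{prop:irred}), compute the $G^\pm_0$-actions on the $v_\mu$, and read off the three assertions.  The only notable differences are cosmetic: the paper actually carries out the $G^-_0$ computation you defer (decomposing $\phi^k(G^-)$ into six pieces and evaluating each zero mode on $v_\mu$ to assemble \eqref{eq:defpoly}), and it phrases indecomposability via uniseriality of $\tp{\rmod{M}{\lambda}}$ rather than your equivalent upward-closed-subset argument.
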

\begin{proof}
	By \cref{prop:irred,cor:rhwmalmostirr}, $\rmod{M}{\lambda}$ is irreducible if $\tp{\rmod{M}{\lambda}}$ is an irreducible $\zhu{\ubpk}$-module.  As the latter has one-dimensional weight spaces, it is irreducible if and only if both $G^+_0$ and $G^-_0$ act bijectively.  From the one-dimensionality of the top space of $M$, the realisation \eqref{eq:realisation}, the basis \eqref{eqn:toppi} of $\tp{\lmod{-1}{\lambda}}$ and \cref{eq:ec0inj}, we see that $G^+_0$ always acts bijectively:
	\begin{equation} \label{eq:G+inj}
		G^+_0 (u \otimes e^{-j+(\lambda+n)c}) = u \otimes e^c_0 e^{-j+(\lambda+n)c} = u \otimes e^{-j+(\lambda+n+1)c} \ne 0.
	\end{equation}
	$\tp{\rmod{M}{\lambda}}$ is therefore uniserial, meaning that its submodules $S_i$ form a linear chain under inclusion: $0 \subset S_1 \subset S_2 \subset \dots \subset \tp{\rmod{M}{\lambda}}$.

	Suppose that $\rmod{M}{\lambda}$ was decomposable, hence that $\rmod{M}{\lambda} = N \oplus N'$ for nonzero submodules $N$ and $N'$.  Then, $N \cap \tp{\rmod{M}{\lambda}}$ and $N' \cap \tp{\rmod{M}{\lambda}}$ are both nonzero submodules of $\tp{\rmod{M}{\lambda}}$, because $\rmod{M}{\lambda}$ has only top-submodules (\cref{thm:rmodonlytopsubmod}).  But, their intersection is clearly zero, in contradiction to $\tp{\rmod{M}{\lambda}}$ being uniserial.  We therefore conclude that $\rmod{M}{\lambda}$ is indecomposable.

	Now, any \hwv\ of $\rmod{M}{\lambda}$ must be in $\tp{\rmod{M}{\lambda}}$ as otherwise the submodule it generates would contradict \cref{thm:rmodonlytopsubmod}.  But, \eqref{eq:G+inj} shows that $\tp{\rmod{M}{\lambda}}$ has no \hwvs.  The situation is similar for \chwvs\ except that a calculation somewhat more involved than \eqref{eq:G+inj} gives
	\begin{equation}  \label{eq:G-inj}
		G^-_0 (u \otimes e^{-j+(\lambda+n)c}) = p^{\Delta,w}_k(\lambda+n) u \otimes e^{-j+(\lambda+n-1)c}.
	\end{equation}
	Indeed, it follows by combining the following straightforward equalities:
	\begin{equation}
		\begin{aligned}
			(We^{-c})_0v
			&= e^{-c}_0W_0 u\otimes e^{-j+(\lambda+n)c}
			= wu\otimes e^{-j+(\lambda+n-1)c},\\
			[(\nu_{-1}T)e^{-c}]_0v
			&=e^{-c}_0T_0\nu_0 u\otimes e^{-j+(\lambda+n)c}
			=\Delta (\lambda+n) u\otimes e^{-j+(\lambda+n-1)c},\\
			[(DT) e^{-c}]_0v
			&=e^{-c}_0(-2T_0) u\otimes e^{-j+(\lambda+n)c}
			=-2\Delta u\otimes e^{-j+(\lambda+n-1)c},\\
			[\nu_{-1}^3e^{-c}]_0v
			&=e^{-c}_0\nu_0^3 u\otimes e^{-j+(\lambda+n)c}
			=(\lambda+n)^3 u\otimes e^{-j+(\lambda+n-1)c},\\
			[\nu_{-1}\nu_{-2}e^{-c}]_0v
			&=e^{-c}_0(-\nu_0)\nu_0 u\otimes e^{-j+(\lambda+n)c}
			=-(\lambda+n)^2 u\otimes e^{-j+(\lambda+n-1)c},\\
			[\nu_{-3}e^{-c}]_0v
			&=e^{-c}_0\nu_0 u\otimes e^{-j+(\lambda+n)c}
			=(\lambda+n) u\otimes e^{-j+(\lambda+n-1)c}.
		\end{aligned}
	\end{equation}
	Here we set $v=u \otimes e^{-j+(\lambda+n)c}$ for brevity.

	If $p^{\Delta,w}_k(\lambda+n) \ne 0$ for any $n \in \Z$, then $G^-_0$ acts bijectively and so $\rmod{M}{\lambda}$ is irreducible.  On the other hand, $p^{\Delta,w}_k(\lambda+n) = 0$ for some $n \in \Z$ implies that $u \otimes e^{-j+(\lambda+n)c}$ is a \chwv\ in $\rmod{M}{\lambda}$ generating a nonzero proper submodule.
\end{proof}

\subsection{Irreducible submodules of $\rmod{M}{\lambda}$} \label{sec:irred}

The result of the previous \lcnamecref{sec:rmodirred} explicitly realises irreducible \rhw\ $\ubpk$-modules.  We now show that we can similarly realise irreducible \hw\ $\ubpk$-modules by analysing submodules of $\rmod{M}{\lambda}$, when the latter is reducible.

\begin{proposition} \label{prop:irredhw}
	Suppose that $M$ is an irreducible \hw\ $\uzamk$-module with \hwv\ $u$ and that $\rmod{M}{\lambda}$ is reducible.  Choose $\mu \in \lambda + \Z$ with \emph{maximal} real part such that $u \otimes e^{-j+\mu c}$ is a \chwv.  Then, $u \otimes e^{-j+\mu c}$ generates an \emph{irreducible} \chw\ submodule of $\rmod{M}{\lambda}$.
\end{proposition}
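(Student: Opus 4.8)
The plan is to exhibit this submodule as an almost-irreducible module with an irreducible top space, so that \cref{prop:irred} applies directly. Write $w_n = u \otimes e^{-j+(\mu+n)c}$ for $n \in \Z$, put $v_0 = w_0$, and let $N = \ubpk v_0$ be the \chw{} submodule it generates. Since $\rmod{M}{\lambda}$ is reducible, \cref{thm:rmodirred} tells us that $p^{\Delta,w}_k$ has a root in $\lambda+\Z$, that the \chwvs{} of $\rmod{M}{\lambda}$ are exactly the $w_n$ with $p^{\Delta,w}_k(\mu+n) = 0$, and (since all elements of $\lambda+\Z$ share the imaginary part $\Im\lambda$) that ``maximal real part'' singles out a unique such $\mu$. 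The numerical fact we shall use repeatedly is that $p^{\Delta,w}_k(\mu+n) \ne 0$ for every $n \ge 1$: otherwise $w_n$ would be a \chwv{} whose $c$-exponent has real part strictly greater than that of $\mu$, contradicting the choice of $\mu$.

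First I would identify the top space of $N$. Because $\ubpk$ is generated by the fields $J$, $L$ and $G^{\pm}$, every element of $N$ is a linear combination of products of modes of these fields applied to $v_0$; reordering by \pbw{} and using that $v_0$ lies at the minimal conformal weight of $\rmod{M}{\lambda}$ (hence is annihilated by all positive modes), it follows that $\tp{N}$ is spanned by products of the \emph{zero} modes $J_0$, $L_0$, $G^+_0$, $G^-_0$ applied to $v_0$. On $\tp{\rmod{M}{\lambda}}$ the weight spaces $\C w_n$ are one-dimensional, $J_0$ and $L_0$ act diagonally, $G^+_0$ sends $w_n$ to $w_{n+1}$ by \eqref{eq:G+inj}, and $G^-_0$ sends $w_n$ to $p^{\Delta,w}_k(\mu+n)\,w_{n-1}$ by \eqref{eq:G-inj}. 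Starting from $w_0$ and using $p^{\Delta,w}_k(\mu)=0$, this generates precisely $V_\mu = \spn_{\C}\{w_n \suchthat n \ge 0\}$; hence $\tp{N} = V_\mu$.

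Next I would check that $V_\mu$ is an irreducible $\zhu{\ubpk}$-module and that $N$ is almost-irreducible. Any nonzero $\zhu{\ubpk}$-submodule of $V_\mu$ is stable under $J_0$, hence, by one-dimensionality of the $J_0$-eigenspaces, equals $\spn_{\C}\{w_n \suchthat n \in I\}$ for some nonempty $I \subseteq \N$; stability under $G^+_0$ makes $I$ closed under $n \mapsto n+1$, while stability under $G^-_0$ together with $p^{\Delta,w}_k(\mu+n) \ne 0$ for $n \ge 1$ makes $I$ closed under $n \mapsto n-1$ on $\{n \ge 1\}$, so $I = \N$ and $V_\mu$ is irreducible. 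As for $N$: it is top-generated because $N = \ubpk v_0 \subseteq \ubpk \tp{N} \subseteq N$ (using $v_0 \in \tp{N}$), and it has only top-submodules because $\rmod{M}{\lambda}$ does by \cref{thm:rmodonlytopsubmod} --- any nonzero submodule $N' \subseteq N$ is a nonzero submodule of $\rmod{M}{\lambda}$, hence meets $\tp{\rmod{M}{\lambda}}$ in a nonzero vector, and such a vector lies in $N$ at the minimal conformal weight of $\rmod{M}{\lambda}$, which is also the minimal weight of $N$ since $v_0$ realises it, so it lies in $\tp{N}$. Therefore $N$ is almost-irreducible with irreducible top space, and \cref{prop:irred} shows that $N$ is irreducible; being generated by the \chwv{} $v_0$, it is an irreducible \chw{} submodule of $\rmod{M}{\lambda}$, as claimed.

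The step I expect to require the most care is the identification $\tp{N} = V_\mu$: one must be certain that no product of modes of $\ubpk$ sends $v_0$ to a top-space vector outside $V_\mu$. This rests on the two observations above --- that positive modes annihilate the minimal-weight vector $v_0$, so that only zero modes contribute to $\tp{N}$, and that choosing $\mu$ to be the rightmost root of $p^{\Delta,w}_k$ in $\lambda+\Z$ makes $G^-_0$ act invertibly on $\spn_{\C}\{w_n \suchthat n \ge 1\}$ while annihilating $w_0$, so that the zero modes can never reach $w_{-1}$ or anything below it.
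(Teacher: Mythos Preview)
Your proof is correct and follows essentially the same strategy as the paper's: both identify $\tp{N}=C\cap\tp{\rmod{M}{\lambda}}=\zhu{\ubpk}\cdot v_0=V_\mu$ as an irreducible Zhu-module and combine this with \cref{thm:rmodonlytopsubmod}. The only difference is packaging---you invoke \cref{prop:irred} explicitly after verifying almost-irreducibility, while the paper reproves that proposition inline by showing directly that every nonzero $v\in C$ is cyclic.
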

\begin{proof}
	By \cref{thm:rmodirred}, $\rmod{M}{\lambda}$ being reducible implies that there exist \chwvs\ of the form $u \otimes e^{-j+\mu c}$, so we may indeed choose $\mu$ maximal.  Then, $u \otimes e^{-j+\mu c}$ is, up to nonzero multiples, the unique \chwv\ in the \chw\ submodule $C = \ubpk \cdot (u \otimes e^{-j+\mu c})$ of $\rmod{M}{\lambda}$.  Moreover, $\zhu{\ubpk} \cdot (u \otimes e^{-j+\mu c})$ is an irreducible $\zhu{\ubpk}$-submodule of $\tp{\rmod{M}{\lambda}}$.

	Choose a nonzero element $v \in C$.  Then, $D = \ubpk \cdot v \subseteq C$ is a nonzero submodule of $\rmod{M}{\lambda}$ and thus $D \cap \tp{\rmod{M}{\lambda}} \ne 0$, by \cref{thm:rmodonlytopsubmod}.  Since
	\begin{equation}
		D \cap \tp{\rmod{M}{\lambda}} \subseteq C \cap \tp{\rmod{M}{\lambda}} = \zhu{\ubpk} \cdot (u \otimes e^{-j+\mu c})
	\end{equation}
	the irreducibility of the latter implies that $u \otimes e^{-j+\mu c} \in D$, whence $D = C$.  This shows that every nonzero element of $C$ is cyclic, so this submodule is irreducible.
\end{proof}

Note that if $\mu$ is chosen maximal, as in \cref{prop:irredhw}, then the irreducible \chw\ submodule generated by $u \otimes e^{-j+\mu c}$ has an infinite-dimensional top space.

\begin{proposition} \label{prop:allirredhw}
	Every irreducible \chw\ $\ubpk$-module whose top space is infinite-dimensional may be explicitly realised as a submodule of $\rmod{M}{\lambda}$, for some irreducible \hw\ $\uzamk$-module $M$ and some $\lambda \in \C$.
\end{proposition}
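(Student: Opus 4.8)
The plan is to invert the construction of \cref{prop:irredhw}: given an irreducible \chwm\ $C$ with infinite-dimensional top space, I want to reconstruct the Zamolodchikov data $(\Delta,w)$ and the parameter $\lambda$ that produce it. So fix such a $C$, generated by a \chwv\ $v$ with $J_0 v = \mu_v v$, $L_0 v = \Delta' v$ and $G^-_0 v = 0$.

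The first step is to pin down the structure of $\tp{C}$ as an irreducible $\zhu{\ubpk}$-module, i.e.\ under the zero modes. Since $L_0$ commutes with every zero mode and $\tp{C}$ is countable-dimensional and irreducible, Schur's lemma forces $L_0$ to act as the scalar $\Delta'$. Using the relations \eqref{cr:bp}, any word in $J_0, L_0, G^{\pm}_0$ applied to $v$ reduces — by moving each $G^-_0$ rightward past the $G^+_0$'s, which only produces scalar corrections because $[G^+_0, G^-_0]$ acts as a scalar on each $J_0$-eigenspace and $G^-_0 v = 0$ — to a multiple of some $(G^+_0)^n v$. Hence $\tp{C}$ is spanned by $v_n := (G^+_0)^n v$, $n \ge 0$, each $J_0$-eigenspace is at most one-dimensional, and $G^-_0 v_n = \gamma_n v_{n-1}$ for scalars $\gamma_n$. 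If some $(G^+_0)^{n_0} v$ vanished, then $\tp{C} = \spn_{\C}\{v_0,\dots,v_{n_0-1}\}$ would be finite-dimensional, contrary to hypothesis; so $v_n \ne 0$ for all $n$. Irreducibility of $C$ likewise forces $\gamma_n \ne 0$ for $n \ge 1$, since otherwise $\spn_{\C}\{v_n : n \ge n_1\}$ is stable under all zero modes and spans a proper nonzero submodule. Finally, the $m=n=0$ case of the $[G^+_m, G^-_n]$ relation in \eqref{cr:bp}, together with $\no{JJ}_0 = J_0^2$ on a top space, gives $\gamma_n - \gamma_{n+1} = 3(\mu_v+n)^2 - (2k+3)(\mu_v+n) - (k+3)\Delta'$; solving this recursion with the value $\gamma_0 = 0$ forced by $G^-_0 v = 0$ exhibits $\gamma_n = q(\mu_v+n)$ for a unique cubic $q(x) = -x^3 + \cdots$ with $q(\mu_v) = 0$, depending on $\Delta'$ and one further constant.

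Next I would match this to a reducible $\rmod{M}{\lambda}$. Put $a = \tfrac{2k+3}{3}$ and $\Delta = \Delta' - a$, so that, since $\tp{\lmod{-1}{\lambda}}$ has conformal weight $a$ by \cref{prop:latmods}, $L_0$ acts on $\tp{\rmod{M}{\lambda}}$ as $\Delta'$. Comparing the $G^-_0$-action of \eqref{eq:G-inj} with the recursion above, $q(x-a)$ and $p^{\Delta,w}_k(x)$ agree automatically in their $x^3$-, $x^2$- and $x^1$-coefficients (the $x^1$ match being a small cancellation using $3a = 2k+3$), so a unique $w$ makes $p^{\Delta,w}_k(x) = q(x-a)$ as polynomials. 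Let $M$ be the irreducible \hwm\ of $\uzamk$ with $T_0$-eigenvalue $\Delta$ and $W_0$-eigenvalue $w$ — this exists, has one-dimensional top space, and admits a character since $\uzamk$ is universal — and set $\lambda = \mu_v + a$. Then $p^{\Delta,w}_k(\mu_v+a) = q(\mu_v) = 0$, so $\rmod{M}{\lambda}$ is reducible by \cref{thm:rmodirred}. Moreover the roots of $p^{\Delta,w}_k$ in $\lambda + \Z$ are the $\mu_v + a + m$ with $q(\mu_v+m) = 0$, and $q(\mu_v+m) = \gamma_m \ne 0$ for $m \ge 1$; hence $\mu_v + a$ has the largest real part among them, so \cref{prop:irredhw} produces the irreducible \chw\ submodule $C' = \ubpk \cdot (u \otimes e^{-j+(\mu_v+a)c})$ of $\rmod{M}{\lambda}$.

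It remains to see $C' \cong C$. By construction $\tp{C'}$ has basis $u \otimes e^{-j+(\mu_v+a+n)c}$, $n \ge 0$, on which $J_0$, $L_0$, $G^+_0$ and $G^-_0$ act exactly as on the $v_n$ — the $G^-_0$-eigenvalues matching because $p^{\Delta,w}_k(\mu_v+a+n) = q(\mu_v+n) = \gamma_n$ — so $\tp{C} \cong \tp{C'}$ as $\zhu{\ubpk}$-modules. Since both $C$ and $C'$ are irreducible $\N$-graded $\ubpk$-modules generated by their top spaces, each is the unique irreducible graded module with the given (irreducible) top space in the sense of \cref{prop:irred} and the surrounding discussion, whence $C \cong C'$. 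I expect the delicate point to be the bookkeeping that makes the dictionary $(\Delta',\ \text{integration constant},\ \mu_v \bmod \Z) \leftrightarrow (\Delta,w,\lambda)$ a bijection — concretely, checking that the identity $p^{\Delta,w}_k(x) = q(x-a)$ is solvable for $w$ and not over-determined, which is the coefficient computation above and ultimately reflects the compatibility already built into \cref{thm:realisation,thm:rmodirred}.
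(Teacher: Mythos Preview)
Your proof is correct and follows essentially the same strategy as the paper's: identify the parameters $(\mu,\Delta,w)$ from the \chwv\ data, invoke \cref{prop:irredhw} to produce an irreducible \chw\ submodule of $\rmod{M}{\lambda}$, and match it to $C$ via its top space. The only notable difference is in how maximality of $\mu$ is established: you compute the recursion for the $\gamma_n$ explicitly and use $\gamma_m \ne 0$ for $m \ge 1$ to rule out larger roots of $p^{\Delta,w}_k$, whereas the paper argues more abstractly by comparing the weight-space dimensions of the $\zhu{\ubpk}$-module $W = V \cap \tp{\rmod{M}{\lambda}}$ with those of its irreducible quotient $\tp{C}$ to conclude that $W$ is already irreducible (hence $\mu$ maximal). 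Your route is slightly more computational but makes the matching $p^{\Delta,w}_k(x) = q(x-a)$ transparent; the paper's is terser but relies on the reader seeing that the one-dimensionality of the weight spaces forces $W \cong \tp{C}$.
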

\begin{proof}
	Let $C$ be an arbitrary irreducible \chw\ $\ubpk$-module with infinite-dimensional top space and let $v$ denote its \chwv.  Then, a basis of $\tp{C}$ is given by the $(G^+_0)^n v$ with $n\ge0$.

	If $u$ is the \hwv\ of some irreducible \hw\ $\uzamk$-module $M$ and $\Delta$ and $w$ are its $T_0$- and $W_0$-eigenvalues, then \cref{prop:latmods,thm:realisation} give
	\begin{equation} \label{eq:jDelta}
		\begin{aligned}
			J_0 (u \otimes e^{-j+\mu c}) &= \left( \mu - \frac{2k+3}{3} \right) u \otimes e^{-j+\mu c} \\ \text{and} \quad
			L_0 (u \otimes e^{-j+\mu c}) &= \left( \Delta + \frac{2k+3}{3} \right) u \otimes e^{-j+\mu c}.
		\end{aligned}
	\end{equation}
	We may therefore arrange for $u \otimes e^{-j+\mu c}$ to have the same $J_0$- and $L_0$-eigenvalues as $v$ by choosing $\mu$ and $\Delta$ in accordance with \eqref{eq:jDelta}.  Moreover, if we specialise $\lambda$ to $\mu$ and $w$ to the unique root of the linear (in $w$) polynomial $p^{\Delta,w}_k(\mu)$, see \eqref{eq:defpoly}, then $u \otimes e^{-j+\mu c}$ is a \chwv\ in $\rmod{M}{\lambda}$, by \cref{thm:rmodirred}.

	This shows that $u \otimes e^{-j+\mu c}$ generates a \chw\ submodule $V$ of $\rmod{M}{\lambda}$ whose irreducible quotient is isomorphic to $C$.  It moreover generates a $\zhu{\ubpk}$-module $W = V \cap \tp{\rmod{M}{\lambda}}$ with basis $u \otimes e^{-j+(\mu+n) c}$, $n\ge0$, whose irreducible quotient is isomorphic to the $\zhu{\ubpk}$-module $\tp{C}$.  Comparing bases, we see that $W \cong \tp{C}$ is irreducible and so $\mu$ must be the maximal solution in $\lambda + \Z$ of $p^{\Delta,w}_k(\mu)=0$ (otherwise, $V$ would have another \chwv\ in $V \cap \tp{\rmod{M}{\lambda}}$ contradicting irreducibility).  But now \cref{prop:irredhw} shows that $V$ is irreducible, hence $V \cong C$ as required.
\end{proof}

Of course, explicit realisations of the irreducible \chwms\ lead to explicit realisations of the irreducible \hwms\ as well, via the conjugation functor of $\ubpk$.  Similar to spectral flow functors, this arises from the automorphism of the mode algebra corresponding to
\begin{equation}
	\begin{aligned}
		G^+(z) &\mapsto G^-(z), & J(z) &\mapsto -J(z) - \frac{2k+3}{3} z^{-1}, \\
		G^-(z) &\mapsto -G^+(z), & L(z) &\mapsto L(z) - \pd J(z) - J(z) z^{-1}.
	\end{aligned}
\end{equation}
This then realises all the irreducible \hw\ $\ubpk$-modules with infinite-dimensional top spaces, but as submodules of the conjugates of the $\rmod{M}{\lambda}$.

\begin{remark}
	Suppose now that $p^{\Delta,w}_k$ has at least two roots in $\lambda + \Z$ and let $\mu$ and $\mu' < \mu$ be the maximal and next-to-maximal root, respectively.  Then, \cref{prop:irredhw} shows that $u \otimes e^{-j+\mu c}$ generates an irreducible \chwm\ $N$ of $\rmod{M}{\lambda}$ while \cref{thm:rmodirred} shows that $u \otimes e^{-j+\mu' c}$ generates a \chwm\ $N'$ that contains $N$.  Despite the fact that $\tp{N'}/\tp{N}$ is a finite-dimensional irreducible $\zhu{\ubpk}$-module, it does not necessarily follow that $N'/N$ is an irreducible $\ubpk$-module.

	The issue here is that $N'$, and hence $\rmod{M}{\lambda}$, may contain a subsingular vector which, in the language developed here, would generate a submodule that is not top-generated.  The upshot is that one can use such quotients to identify (conjugate) \hwms\ with finite-dimensional top spaces but that this does not amount to a concrete realisation.  Instead, one can employ spectral flow.
\end{remark}

\begin{corollary}
	Given an irreducible \hw\ $\ubpk$-module $N$, one of the following possibilities occurs:
	\begin{itemize}
		\item $N$ may be realised as a submodule of $\bpsfsymb^{\ell}(\rmod{M}{\lambda}) \cong M \otimes \lmod{\ell-1}{\lambda}$, for some $\ell \in \Z_{\ge1}$, some irreducible \hw\ $\uzamk$-module and some $\lambda \in \C$.
		\item The $\bpsfsymb^{-\ell}(N)$ have finite-dimensional top spaces for all $\ell \in \Z_{\ge1}$.
	\end{itemize}
\end{corollary}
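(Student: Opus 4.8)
The plan is to follow $N$ under spectral flow and then appeal to \cref{prop:allirredhw}. Since every $\bpsfsymb^{\ell}$ is an invertible endofunctor, each $\bpsfsymb^{-\ell}(N)$ with $\ell\in\Z_{\ge1}$ is again irreducible, and the crux of the argument is that these stay \chwms\ as long as the preceding module in the chain has a finite-dimensional top space. From \eqref{eq:bpsf} I would first note that passing to $\bpsfsymb^{-1}(N)$ realises the mode $G^+_n$ as $G^+_{n-1}$ and $G^-_n$ as $G^-_{n+1}$, leaves the positive modes of $J$ unchanged, and changes those of $L$ only by a multiple of the corresponding mode of $J$. Hence, if $v$ is a \hwv\ of $N$, then $\bpsfsymb^{-1}(v)$ is annihilated by $G^-_0$ (since $G^-_1v=0$) and by all positive modes, so it is a \chwv\ and generates the irreducible module $\bpsfsymb^{-1}(N)$; moreover $\bpsfsymb^{-1}(N)$ is positive-energy with $\tp{\bpsfsymb^{-1}(N)}=\spn_{\C}\{(G^+_0)^n\bpsfsymb^{-1}(v):n\ge0\}$.

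Next I would establish the inductive step, where finiteness is used essentially: if $C$ is an irreducible \chwm\ with \chwv\ $v_C$ and with $\tp{C}=\spn_{\C}\{(G^+_0)^nv_C:0\le n\le n_C-1\}$ finite-dimensional, so that $(G^+_0)^{n_C}v_C=0$, then $\bpsfsymb^{-1}(C)$ is again an irreducible \chwm, with \chwv\ $\bpsfsymb^{-1}((G^+_0)^{n_C-1}v_C)$. Verifying that this vector is killed by $G^-_0$ and all positive modes of $\bpsfsymb^{-1}(C)$ uses $[G^+_m,G^+_n]=0$, $[L_m,G^+_0]=0$, $[J_m,G^+_0]=G^+_m$, the fact that $[G^-_m,G^+_0]$ strictly lowers conformal weight for $m\ge1$, and — for the mode $G^+_0$ itself, which acts as a positive mode of $\bpsfsymb^{-1}(C)$ — the relation $(G^+_0)^{n_C}v_C=0$. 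Finite-dimensionality of $\tp{C}$ is exactly what keeps $\bpsfsymb^{-1}(C)$ positive-energy: the largest $J_0$-eigenvalue on the conformal weight $(\Delta_C+m)$ subspace of $C$ is bounded above by a quantity of the form $\mathfrak{j}_C+m+n_C-1$, so the shifted weight $L_0-J_0+(\text{const})$ controlling $\bpsfsymb^{-1}(C)$ is bounded below.

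Assembling the chain $C_1=\bpsfsymb^{-1}(N)$ and $C_{\ell+1}=\bpsfsymb^{-1}(C_\ell)=\bpsfsymb^{-(\ell+1)}(N)$, the two steps show that $C_1$ is always an irreducible \chwm\ and that $C_{\ell+1}$ is an irreducible \chwm\ whenever $C_1,\dots,C_\ell$ all have finite-dimensional top spaces. If every $C_\ell$ ($\ell\ge1$) has a finite-dimensional top space, the second alternative holds. Otherwise let $\ell_0\ge1$ be minimal with $\tp{C_{\ell_0}}$ infinite-dimensional; then $C_{\ell_0}$ is an irreducible \chwm\ with infinite-dimensional top space, so \cref{prop:allirredhw} realises it as a submodule of some $\rmod{M}{\lambda}=M\otimes\lmod{-1}{\lambda}$. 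Applying $\bpsfsymb^{\ell_0}$, together with $\bpsfsymb^{\ell_0}=\id\otimes\lsfsymb^{\ell_0}$ (\cref{rem:sfmatch}) and $\lsfsymb^{\ell_0}(\lmod{-1}{\lambda})\cong\lmod{\ell_0-1}{\lambda}$ (from \eqref{eq:sf}), realises $N\cong\bpsfsymb^{\ell_0}(C_{\ell_0})$ as a submodule of $\bpsfsymb^{\ell_0}(\rmod{M}{\lambda})\cong M\otimes\lmod{\ell_0-1}{\lambda}$; this is the first alternative with $\ell=\ell_0$.

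I expect the inductive step to be the main obstacle, precisely because of the role of finite-dimensionality: when $\tp{C}$ is infinite-dimensional, $\bpsfsymb^{-1}(C)$ has conformal weights unbounded below, leaves the positive-energy (conjugate highest-weight) setting, and the chain can be continued neither to apply the step again nor to invoke \cref{prop:allirredhw} — which is exactly why the second alternative must be phrased for all $\ell\ge1$ rather than just $\ell=1$.
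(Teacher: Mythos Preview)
Your proof is correct and follows the same strategy as the paper's: iterate $\bpsfsymb^{-1}$, use finite-dimensionality of the top space to continue the chain, and invoke \cref{prop:allirredhw} at the first infinite-dimensional stage. The paper streamlines your inductive step by observing that an irreducible \chwm\ with finite-dimensional top space is automatically also a \hwm\ (the vector $(G^+_0)^{n_C-1}v_C$ is its \hwv, and it is annihilated by all positive modes simply because it lies in the top space of a positive-energy module), so one may reapply the already-established fact that $\bpsfsymb^{-1}$ sends \hwvs\ to \chwvs\ without your separate commutator checks or the explicit $J_0$-bound for positive energy.
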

\begin{proof}
	This follows from the assertion, easily checked using \eqref{eq:bpsf} and \eqref{eq:sfmod}, that the spectral flow map $\bpsfsymb^{-1}$ takes a \hwv\ to a \chwv.  For example, $G^+_n v = 0$ for $n\ge0$ implies that
	\begin{equation}
		G^+_{n+1} \bpsfsymb^{-1}(v) = \bpsfsymb^{-1} \bigl( \bpsfsymb(G^+_{n+1}) v \bigr) = \bpsfsymb^{-1} \bigl( G^+_n v \bigr) = 0.
	\end{equation}
	$\bpsfsymb^{-1}(N)$ is thus an irreducible \chwm.  If its top space is finite-dimensional, then it is also a \hwm\ and so we may apply $\bpsfsymb^{-1}$ to its \hwv.

	Iterating, we find that either the $\bpsfsymb^{-\ell}(N)$, with $\ell \in \Z_{\ge1}$, all have finite-dimensional top spaces or we arrive at an irreducible \chwm\ $\bpsfsymb^{-\ell}(N)$ with an infinite-dimensional top space.  In the latter case, $\bpsfsymb^{-\ell}(N)$ embeds into some $\rmod{M}{\lambda}$, by \cref{prop:allirredhw}.  Since spectral flow functors are invertible, we conclude that $N$ embeds into $\bpsfsymb^{\ell}(\rmod{M}{\lambda})$, as desired.
\end{proof}

It follows that we can realise any given irreducible \hw\ $\ubpk$-module, as long as its negative spectral flow orbit does not consist exclusively of modules with finite-dimensional top spaces.  A generic orbit will not have this property and therefore irreducible \hwms\ are generically realisable.  However, there are some \hwms\ that cannot be realised in this way.  In particular, the irreducible \hw\ $\sbpk$-modules with $2k+3 \in \N$ are examples because their top spaces are always finite-dimensional (see \cref{rem:examples} below).

\bigskip

We conclude with an example that illustrates these realisations.  As in \cite{Watts}, let us parametrise the highest weights of the irreducible \hw\ $\uzamk$-modules by
\begin{equation}
	\begin{aligned}
		\Delta &= \frac{(r-ts)^2 + (r-ts) (r'-ts') + (r'-ts')^2}{3t} - \frac{(t - 1)^2}{t} \\ \text{and} \quad
		w &= \frac{\bigl( r-ts - (r'-ts') \bigr) \bigl( 2(r-ts) + (r'-ts') \bigr) \bigl( r-ts + 2(r'-ts') \bigr)}{27},
	\end{aligned}
\end{equation}
where $r, r', s, s' \in {\C}$ and $t = k+3$.  By direct calculation, we see that the polynomial in \eqref{eq:defpoly} factorises.
\begin{lemma} We have
\begin{subequations}
	\begin{equation}
		p^{\Delta,w}_k(x) = -(x-x_1) (x-x_2) (x-x_3),
	\end{equation}
	where
	\begin{equation}
		\begin{aligned}
			x_1 &= t-1 - \frac{r-ts - (r'-ts')}{3}, \\
			x_2 &= t-1 + \frac{2(r-ts) + r'-ts'}{3}, \\
			x_3 &= t-1 - \frac{r-ts + 2(r'-ts')}{3}
		\end{aligned}
		\qquad \text{so} \qquad
		\begin{aligned}
			x_2 - x_1 &= r-ts, \\
			x_1 - x_3 &= r'-ts', \\
			x_2 - x_3 &= r+r'-t(s+s').
		\end{aligned}
	\end{equation}
\end{subequations}
\end{lemma}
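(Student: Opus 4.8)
The plan is to prove the factorisation by comparing coefficients (Vieta's formulae), after a change of variable that strips away the inessential shifts. Writing $t = k+3$ (so that $k+2 = t-1$), I would first rewrite the cubic as
\[
	p^{\Delta,w}_k(x) = -x^3 + 3(t-1)x^2 + \bigl(t\Delta - 2(t-1)^2\bigr)x + w - (t-1)t\Delta,
\]
and abbreviate $p = r-ts$ and $q = r'-ts'$, so that $\Delta = \tfrac{1}{3t}(p^2+pq+q^2) - \tfrac{(t-1)^2}{t}$, $w = \tfrac{1}{27}(p-q)(2p+q)(p+2q)$, and $x_1 = (t-1) - \tfrac{p-q}{3}$, $x_2 = (t-1) + \tfrac{2p+q}{3}$, $x_3 = (t-1) - \tfrac{p+2q}{3}$.

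The crucial move is to substitute $x = u + (t-1)$, equivalently to work with the shifted roots $y_i = x_i - (t-1)$. A direct expansion shows that the coefficient of $u^2$ in $p^{\Delta,w}_k(u + (t-1))$ vanishes, that the coefficient of $u$ becomes $t\Delta + (t-1)^2 = \tfrac{1}{3}(p^2+pq+q^2)$, and that the constant term collapses all the way down to $w$; the normalisation $-\tfrac{(t-1)^2}{t}$ built into the Watts $\Delta$ is exactly what makes these reductions clean. It therefore remains to check that $-(u-y_1)(u-y_2)(u-y_3)$ has the same three coefficients, i.e.\ that $y_1+y_2+y_3 = 0$, that $y_1y_2+y_1y_3+y_2y_3 = -\tfrac{1}{3}(p^2+pq+q^2)$, and that $y_1y_2y_3 = w$.

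The first identity is immediate from $-(p-q) + (2p+q) - (p+2q) = 0$. Given it, one has $y_1y_2+y_1y_3+y_2y_3 = -\tfrac12(y_1^2+y_2^2+y_3^2)$, so the second identity follows from the elementary computation $(p-q)^2+(2p+q)^2+(p+2q)^2 = 6(p^2+pq+q^2)$. The third is just the stated factorisation of $w$ read in reverse: $y_1y_2y_3 = \bigl(-\tfrac{p-q}{3}\bigr)\bigl(\tfrac{2p+q}{3}\bigr)\bigl(-\tfrac{p+2q}{3}\bigr) = \tfrac{1}{27}(p-q)(2p+q)(p+2q) = w$. This establishes $p^{\Delta,w}_k(x) = -(x-x_1)(x-x_2)(x-x_3)$, and the three difference formulae then drop out by inspection: $x_2 - x_1 = \tfrac{1}{3}\bigl((2p+q)+(p-q)\bigr) = p = r-ts$, $x_1 - x_3 = q = r'-ts'$, and $x_2-x_3 = p+q = r+r'-t(s+s')$.

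Since everything here is a mechanical polynomial identity, there is no genuine obstacle; the only point demanding mild care is the bookkeeping of signs in the cubic and verifying that the constant term of the shifted polynomial reduces to $w$ alone (with the $(t-1)^3$ and $t\Delta$ contributions cancelling), which is precisely where the Watts normalisation does its work. As an alternative one could bypass Vieta and simply substitute $x = x_1, x_2, x_3$ in turn into $p^{\Delta,w}_k$, checking each value vanishes, but the symmetric-function route is shorter and makes transparent why the factorisation holds.
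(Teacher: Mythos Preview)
Your proof is correct and essentially fills in the ``direct calculation'' that the paper invokes without details. The depression substitution $x = u + (t-1)$ together with Vieta's formulae is a clean way to organise the computation, and all the coefficient identities you check (in particular the collapse of the constant term to $w$) go through exactly as stated.
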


\begin{example}
	One sees that if $k \notin \tfrac{1}{2} \Z$,  $s=s'=1$ and $r, r' \in {\Z}_{>0}$, then the roots $x_i$, $i=1,2,3$, lie in different cosets of $\C/\Z$.  Each in therefore maximal in the sense of \cref{prop:irredhw}, so this \lcnamecref{prop:irredhw} and \cref{thm:rmodirred} show that, for each $i=1,2,3$,
	\begin{equation}
		C_i = \ubpk \cdot (u \otimes e^{-j + x_i c})
	\end{equation}
	is an irreducible \chw\ $\ubpk$-module with an infinite-dimensional top space.  Applying spectral flow, it follows that $\bpsfsymb(C_i)$ is an irreducible \hwm, for each $i$.  Detailed calculation shows that $\bpsfsymb(C_1)$ and $\bpsfsymb(C_2)$ always have finite-dimensional top spaces, indeed of dimensions $r'$ and $r$ respectively.  The situation for $\bpsfsymb(C_3)$ is more subtle: if $3t$ is an integer larger than $r+r'$, then the top space is finite-dimensional (with dimension $3t-r-r'$); otherwise it is infinite-dimensional.
\end{example}

In our forthcoming publications \cite{AKR-new}, we shall present a detailed study of the structure of $\rmod{M}{\lambda}$.

\section{Realisation of the vertex algebra $\sbpk$ and its relaxed modules} \label{sec:simple}

Recall from \cref{thm:realisation} that we have established an embedding $\phi^k$ of the universal \bp\ algebra $\ubpk$, $k\ne-3$, in the tensor product of the universal Zamolodchikov algebra $\uzamk$ and the lattice vertex algebra $\lvoa$.  It is natural to ask if this realisation descends to the simple quotients, that is if $\sbpk$ embeds in $\szamk \otimes \lvoa$.  We shall show that the answer is frequently, but not always, yes.

The answer is obviously yes if $\ubpk$ is already simple.  By \cite[Thms.~0.2.1 and 9.1.2]{GorSim07}, $\ubpk$ is not simple if and only if the (noncritical) level $k$ satisfies
\begin{equation} \label{eq:fraclevel}
	k+3 = \frac{p'}{p}, \quad \text{for some coprime}\ p \in \Z_{\ge1}\ \text{and}\ p' \in \Z_{\ge2}.
\end{equation}
For these levels, we consider the projection $\pi_k \colon \uzamk \to \szamk$ and the composition
\begin{equation} \label{eq:defpsi}
	\psi^k \colon \ubpk \overset{\phi^k}{\lira} \uzamk \otimes \lvoa \xtwoheadrightarrow{\pi_k \otimes \id} \szamk \otimes \lvoa
\end{equation}
of \voa\ morphisms.  Since $\psi^k$ maps the vacuum of $\ubpk$ to the vacuum of $\szamk \otimes \lvoa$, it is not zero.  We shall investigate when $\im \psi^k$ is simple.

The following lemma about singular vectors is a version of Lemma~ 8.1 from \cite{AK}.  (We recall that a singular vector is just another name for a \hwv.)
\begin{lemma} \label{lem:easycalc}
	The vector $(G^+_{-1})^n \vac$, $n>0$, is singular in $\ubpk$ if and only if either $n=k+2$ and $k \in \{-1,0,1,2,\dots\}$ or $n=2(k+2)$ and $k \in \{-\frac{3}{2},-1,-\frac{1}{2},0,\frac{1}{2},\dots\}$.
\end{lemma}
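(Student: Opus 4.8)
The plan is to reduce the singularity of $v_n := (G^+_{-1})^n\vac$ to a single scalar equation in $n$ and $k$, and then read that scalar off the mode relations \eqref{cr:bp}. Since $\ubpk$ is strongly generated by $J$, $L$, $G^+$ and $G^-$, it suffices (a standard reduction) to check that $v_n$ is a joint $J_0$- and $L_0$-eigenvector annihilated by $G^+_0$ and by the positive modes of these four fields. The eigenvalue statement and all the required annihilations except those by positive $G^-$-modes are immediate: using $[J_m,G^+_{-1}] = [L_m,G^+_{-1}] = G^+_{m-1}$, $[G^+_m,G^+_{-1}] = 0$, and the vanishing of $J_m\vac$ ($m\ge0$), $L_m\vac$ ($m\ge-1$) and $G^+_m\vac$ ($m\ge0$), one obtains $J_0 v_n = L_0 v_n = n v_n$, $G^+_0 v_n = 0$, and $J_m v_n = L_m v_n = G^+_m v_n = 0$ for all $m\ge1$. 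For the positive $G^-$-modes the key observation is that, because $J_1 v_n = 0$, the relation $[J_1,G^-_m] = -G^-_{m+1}$ forces $G^-_{m+1}v_n = -J_1 G^-_m v_n$, so that $G^-_m v_n = 0$ propagates in $m$. Hence $v_n$ is singular if and only if $G^-_1 v_n = 0$.

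To evaluate $G^-_1 v_n$ I would commute $G^-_1$ past the $n$ copies of $G^+_{-1}$. By \eqref{cr:bp} (with $m=-1$, $n=1$),
\begin{equation*}
	[G^-_1,G^+_{-1}] = -3\no{JJ}_0 + (k+3)L_0 + (5k+6)J_0 - (k+1)(2k+3)\,\id ,
\end{equation*}
and the point is that this operator acts by a scalar $c_l$ on each $(G^+_{-1})^l\vac$: the modes $J_0$ and $L_0$ act by $l$, while $\no{JJ}_0 = J_0^2 + 2\sum_{p\ge1}J_{-p}J_p$ acts by $l^2$ since the positive Heisenberg modes kill $(G^+_{-1})^l\vac$. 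This last step does not use the $JJ$ operator product, so the whole computation is uniform in $k$ and in particular nothing special happens at $k=-\tfrac32$. With $c_l = -3l^2 + 3(2k+3)l - (k+1)(2k+3)$ and $G^-_1\vac=0$, expanding the commutator and using that $[G^-_1,G^+_{-1}]$ acts diagonally then gives
\begin{equation*}
	G^-_1 (G^+_{-1})^n\vac = \Bigl(\sum_{l=0}^{n-1} c_l\Bigr)(G^+_{-1})^{n-1}\vac .
\end{equation*}

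The last step is pure arithmetic: summing $c_l$ via $\sum_{l=0}^{n-1} l = \tfrac12 n(n-1)$ and $\sum_{l=0}^{n-1} l^2 = \tfrac16 n(n-1)(2n-1)$ and factoring the resulting cubic in $n$ yields
\begin{equation*}
	\sum_{l=0}^{n-1} c_l = -n\bigl(n-(k+2)\bigr)\bigl(n-2(k+2)\bigr).
\end{equation*}
By the \pbw\ basis of \cref{lem:pbwbpzam}, both $(G^+_{-1})^n\vac$ and $(G^+_{-1})^{n-1}\vac$ are nonzero in $\ubpk$, so $(G^+_{-1})^n\vac$ is singular precisely when $n=k+2$ or $n=2(k+2)$; recording which $n\in\Z_{\ge1}$ can arise gives $k = n-2 \in\{-1,0,1,\dots\}$ in the first case and $k = \tfrac{n}{2}-2 \in \{-\tfrac32,-1,-\tfrac12,0,\tfrac12,\dots\}$ in the second, which is exactly \cref{lem:easycalc}. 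I do not expect a serious obstacle: the only places that demand care are the reduction in the first paragraph (justifying that it suffices to treat the generators, and then only $G^-_1$) and the translation of $[G^-_1,G^+_{-1}]$ from \eqref{cr:bp}, where the mode-index conventions of \eqref{eq:expansionconventions} and the normal-ordering convention must be tracked carefully; the rest is a one-line symmetric-function identity.
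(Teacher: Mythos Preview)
Your proposal is correct and follows essentially the same route as the paper: reduce singularity of $(G^+_{-1})^n\vac$ to the single condition $G^-_1(G^+_{-1})^n\vac=0$, then compute this explicitly to obtain the factored cubic $-n(n-k-2)(n-2k-4)$. The paper's proof is extremely terse (it simply states the result of the $G^-_1$ calculation), whereas you spell out the propagation argument for the higher $G^-$-modes, verify the action of $[G^-_1,G^+_{-1}]$ on $(G^+_{-1})^l\vac$, and carry out the summation and factoring; but the underlying argument is the same.
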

\begin{proof}
	This is a straightforward computation using the commutation relations \eqref{cr:bp}.  Since the $J_m$, $L_m$, $G^-_{m+1}$ and $G^+_{m-1}$, with $m>0$, clearly annihilate $(G^+_{-1})^n \vac$, we only need to calculate $G^-_1 (G^+_{-1})^n \vac$.  This is easy (and has already appeared in the proof of \cite[Lem.~8.1]{AK}):
	\begin{equation}
		G^-_1 (G^+_{-1})^n \vac = -n (n-k-2) (n-2k-4) (G^+_{-1})^{n-1} \vac. \qedhere
	\end{equation}
\end{proof}

\begin{theorem} \label{thm:simpleembed}
	$\sbpk$ embeds into $\szamk \otimes \lvoa$ if and only if $2k+3 \notin \N$.
\end{theorem}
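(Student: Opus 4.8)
First I would fix notation. Let $\mathcal{J}_k \subseteq \ubpk$ and $\mathcal{I}_k \subseteq \uzamk$ denote the maximal proper ideals (each is unique, as $\ubpk$ and $\uzamk$ have one-dimensional degree-zero spaces), so $\sbpk = \ubpk/\mathcal{J}_k$ and $\szamk = \uzamk/\mathcal{I}_k$, and recall from \eqref{eq:defpsi} the nonzero homomorphism $\psi^k = (\pi_k \otimes \id)\circ\phi^k$; since it carries the conformal vector of $\ubpk$ to that of $\szamk \otimes \lvoa$, an embedding through $\psi^k$ is automatically a \voa\ embedding. As $\psi^k(\vac) \neq 0$, $\ker\psi^k$ is a proper ideal, hence $\ker\psi^k \subseteq \mathcal{J}_k$, and $\sbpk$ embeds into $\szamk \otimes \lvoa$ as soon as $\psi^k(\mathcal{J}_k) = 0$. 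So the plan has two halves: (a) if $2k+3 \in \N$, show $\sbpk$ admits \emph{no} \voa\ embedding into $\szamk \otimes \lvoa$; (b) if $2k+3 \notin \N$, show $\psi^k(\mathcal{J}_k) = 0$, so the induced map $\sbpk \to \szamk \otimes \lvoa$ is nonzero and hence injective because $\sbpk$ is simple.

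For (a), the input is \cref{lem:easycalc}: when $2k+3 \in \N$ the vector $v = (G^+_{-1})^{2(k+2)}\vac$ is a nonzero singular vector of $\ubpk$ (nonzero by the \pbw\ basis of \cref{lem:pbwbpzam}), hence $v = 0$ in $\sbpk$. Given a hypothetical \voa\ embedding $\iota$, the element $\mathsf{g} = \iota(G^+)$ has conformal weight $1$, satisfies $\mathsf{g}(z)\mathsf{g}(w) \sim 0$, and $(\mathsf{g}_{-1})^{2(k+2)}\vac = 0$. I would then argue that $\szamk \otimes \lvoa$ contains no nonzero weight-one element with trivial self-$\ope$ that is nilpotent in this sense: such a $\mathsf{g}$ generates a free polynomial commutative vertex subalgebra, which one sees by expressing $\mathsf{g}$ in a \pbw-type basis adapted to the $\heis \otimes \C[\Z c]$-tensorand (as in \cref{prop:betterbasis}) and reading off its leading term --- for every $N \geq 1$, either $(\mathsf{g}_{-1})^N\vac$ has a nonzero component of winding $e^{Nc}$, or $\mathsf{g}$ has trivial $e^{\pm c}$-content, whence $\mathsf{g}$ is a nonzero multiple of $c_{-1}\vac$ or $d_{-1}\vac$ and $(\mathsf{g}_{-1})^N\vac$ is a nonzero monomial in the Heisenberg polynomial algebra. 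Thus $\mathsf{g} = 0$, i.e.\ $G^+ = 0$ in $\sbpk$, which by \cref{lem:easycalc} occurs only for $k \in \{-\tfrac32,-1\}$. For those two levels I would finish by a direct computation with \eqref{cr:bp}: using $G^+ = 0$ one finds $\sbp{-3/2} \cong \C$ (the trivial \voa, with conformal vector $0$, so no \voa\ map into the nontrivial $\szam{-3/2} \otimes \lvoa$) and $\sbp{-1}$ is the rank-one Heisenberg \voa\ generated by $J$ with conformal vector $\tfrac32\no{JJ} + \tfrac12\pd J$, whose conformal vector cannot equal that of $\szam{-1} \otimes \lvoa$ (the $\szamk$-tensorand has no weight-one vectors, which forces any candidate Heisenberg field into $\vac \otimes \lvoa$, where a short computation rules out the required Sugawara identity).

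For (b), assume $2k+3 \notin \N$. If $\ubpk$ is simple --- which by \cite{GorSim07} fails exactly when $k+3 = p'/p$ as in \eqref{eq:fraclevel} --- then $\psi^k$ is a nonzero map out of a simple \voa\ and hence injective, giving $\sbpk = \ubpk \hookrightarrow \szamk \otimes \lvoa$. Otherwise $k+3 = p'/p$, and $2k+3 \notin \N$ forces $p \geq 3$. Here I must show $\psi^k(\mathcal{J}_k) = 0$; equivalently, since $\mathcal{I}_k \otimes \lvoa$ is the annihilator of $\szamk \otimes \lvoa$ in $\uzamk \otimes \lvoa$, that $\phi^k(\mathcal{J}_k) \subseteq \mathcal{I}_k \otimes \lvoa$, i.e.\ that $\szamk \otimes \lvoa$ (viewed as a $\ubpk$-module via $\phi^k$) is a \emph{faithful} $\sbpk$-module. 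The plan is to reduce to generators of $\mathcal{J}_k$: because $2k+3 \notin \N$ (so also $k \notin \N$), \cref{lem:easycalc} shows that none of the ``lattice-type'' singular vectors $(G^+_{-1})^n\vac$ lies in $\mathcal{J}_k$, so --- using the known structure of $\mathcal{J}_k$ behind \cite{GorSim07} --- its generators are singular vectors genuinely involving modes of $L$ and $G^-$. Expanding $\phi^k$ of such a vector via $\phi^k(L_{(-n)}) = T_{(-n)} \otimes \vac + [\text{terms without }T]$ and $\phi^k(G^-_{(-n)}) = \sum_{m\geq 0} W_{(-n+m)} \otimes e^{-c}_{(-m-1)} + [\text{terms without }W]$, and tracking the $T$- and $W$-mode content exactly as in the injectivity proof of \cref{thm:realisation}, one recognises it as governed by the image of a singular vector of $\uzamk$ generating $\mathcal{I}_k$; applying $\pi_k \otimes \id$ then annihilates it, giving $\psi^k(\mathcal{J}_k)=0$.

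The hard part will be this last step of (b): it needs the maximal ideal $\mathcal{J}_k$ described concretely enough at each $k = -3 + p'/p$ with $p \geq 3$ (via the structure theory behind \cite{GorSim07} together with the explicit Zamolodchikov singular vectors) to carry out the $\phi^k$-bookkeeping and confirm that $\phi^k(\mathcal{J}_k) \subseteq \mathcal{I}_k \otimes \lvoa$. A complementary way to package the same conclusion, which I would develop alongside, is via the almost-irreducibility machinery of \cref{sec:almost}: $\szamk \otimes \lvoa$ is a spectral-flow image of the relaxed module $\szamk \otimes \lmod{-1}{0}$, and controlling the common $\ubpk$-annihilator of the family $\{\szamk \otimes \lmod{-1}{\lambda}\}$ of $\szamk \otimes \lvoa$-modules should force $\ker\psi^k = \mathcal{J}_k$. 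By contrast, part (a) for $2k+3 \geq 2$ and the simple case of (b) are routine.
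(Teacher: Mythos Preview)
Your main plan for (b) has a real gap. Showing $\phi^k(\mathcal{J}_k) \subseteq \mathcal{I}_k \otimes \lvoa$ by tracking explicit generators of $\mathcal{J}_k$ through $\phi^k$ presupposes both a concrete description of those singular vectors and a mechanism for recognising their images inside $\mathcal{I}_k \otimes \lvoa$. Neither is available: \cite{GorSim07} supplies a simplicity criterion, not explicit singular vectors of $\ubpk$ at general $k = -3+p'/p$, and the claim that the generators ``genuinely involve $L$ and $G^-$'' with leading $T$/$W$-content matching a generator of $\mathcal{I}_k$ is precisely the statement you are trying to establish. The injectivity bookkeeping from the proof of \cref{thm:realisation} only identifies a single leading basis vector of $\phi^k$ applied to a \pbw\ monomial; it cannot decide whether the full image of a singular vector lies in the ideal $\mathcal{I}_k \otimes \lvoa$.

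The paper's argument is exactly your ``complementary way'', promoted to the sole argument and sharpened so that no structural knowledge of $\mathcal{J}_k$ or $\mathcal{I}_k$ is required. One shows directly that $\im\psi^k$ is simple. If $I \subset \im\psi^k$ were a nonzero proper ideal, apply $\bpsfsymb^{-1} = \id \otimes \lsfsymb^{-1}$ so that $\bpsfsymb^{-1}(I)$ is a nonzero $\ubpk$-submodule of $\szamk \otimes \lmod{-1}{0} = \rmod{\szamk}{0}$. Since $\szamk$ is an irreducible $\uzamk$-module, \cref{thm:rmodonlytopsubmod} forces $\bpsfsymb^{-1}(I)$ to meet the top space, giving $\vac \otimes e^{-j+nc} \in \bpsfsymb^{-1}(I)$ for some $n \in \Z$, hence $\psi^k\bigl((G^+_{-1})^n\vac\bigr) = \vac \otimes e^{nc} \in I$. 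Conformal-weight positivity on $\im\psi^k$ forces $n \ge 0$; $n=0$ contradicts properness of $I$; and for $n$ chosen minimal, $\vac \otimes e^{nc}$ is singular (annihilated by $G^-_1$ by minimality, and by the other positive modes trivially), so $(G^+_{-1})^n\vac$ is singular in $\ubpk$. Now \cref{lem:easycalc} handles both directions at once: if $2k+3 \notin \N$ there is no such $n>0$, so $\im\psi^k$ is simple and $\sbpk \cong \im\psi^k \hookrightarrow \szamk \otimes \lvoa$; if $2k+3 \in \N$ the singular vector $\vac \otimes e^{nc}$ generates a nonzero proper ideal, so $\im\psi^k$ is not simple and $\psi^k$ does not descend. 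In particular the paper's ``only if'' concerns only the map $\psi^k$ --- your part~(a) attempts the strictly stronger claim that \emph{no} embedding exists, and your weight-one analysis there is incomplete as stated: the conformal-weight-$1$ subspace of $\szamk \otimes \lvoa$ is infinite-dimensional, containing for example $\vac \otimes a_{-2}e^{-c}$ and more generally elements supported in arbitrarily negative winding, a case your dichotomy does not address.
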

\begin{proof}
	Suppose that $\im \psi^k$ has a nonzero proper ideal $I$.  Then, $I$ is a submodule of $\szamk \otimes \lvoa = \szamk \otimes \lmod{0}{0}$.  Here, we may regard $I$ and $\szamk \otimes \lvoa$ as $\ubpk$-modules, by \eqref{eq:defpsi}, noting that this makes $\psi^k$ into a $\ubpk$-module homomorphism.

	Applying the spectral flow map $\bpsfsymb^{-1} = \id \otimes \lsfsymb^{-1}$ of $\ubpk$ ($\lsfsymb^{-1}$ is a spectral flow map of $\lvoa$, see \eqref{eq:sf}), it follows that $\bpsfsymb^{-1}(I)$ is a nonzero submodule of
	\begin{equation}
		(1 \otimes \lsfsymb^{-1})(\szamk \otimes \lmod{0}{0}) \cong \szamk \otimes \lmod{-1}{0} = \rmod{\szamk}{0}.
	\end{equation}
	Since $\szamk$ is an irreducible $\uzamk$-module, it has only top-submodules.  The $\ubpk$-module $\rmod{\szamk}{0}$ therefore also has only top-submodules, by \cref{thm:rmodonlytopsubmod}.  Thus,
	\begin{equation}
		\tp{\rmod{\szamk}{0}} \cap \bpsfsymb^{-1}(I) \ne 0.
	\end{equation}
	In other words, there exists $n \in \Z$ such that $\vac \otimes e^{-j+nc} \in \bpsfsymb^{-1}(I)$.  Applying $\bpsfsymb = \id \otimes \lsfsymb$, we conclude that $\vac \otimes e^{nc} \in I$.

	As $I \subset \im \psi^k$ and $\im \psi^k$ is a homomorphic image of $\ubpk$, $I$ has nonnegative conformal weights.  Thus, $n \in \N$.  However, $n=0$ implies that $I = \im \psi^k$, a contradiction.  Therefore, we have
	\begin{equation} \label{eq:sv}
		\psi^k \left( (G^+_{-1})^n \vac \right) = \vac \otimes e^{nc} \in I,
	\end{equation}
	for some $n \in \Z_{>0}$, using the explicit realisation of \cref{thm:realisation}.

	Now choose $n \in \Z_{>0}$ minimal such that \eqref{eq:sv} holds.  Then, $\psi^k \bigl( (G^+_{-1})^n \vac \bigr)$ is annihilated by $G^-_1$, because the result must be proportional to $\psi^k \bigl( (G^+_{-1})^{n-1} \vac \bigr)$ which is $0$ by minimality.  But, it is also annihilated by the $J_m$, $L_m$, $G^-_{m+1}$ and $G^+_{m-1}$, with $m>0$.  We conclude that $\psi^k \bigl( (G^+_{-1})^n \vac \bigr)$ is a singular vector in $I$ (regarded as a $\ubpk$-module).  As $\psi^k \bigl( (G^+_{-1})^{n-1} \vac \bigr) = \vac \otimes e^{(n-1)c}$ is nonzero, so is $(G^+_{-1})^n \vac \in \ubpk$.

	If $2k+3 \notin \N$, then this is impossible by \cref{lem:easycalc} and hence $\im \psi^k$ is simple.  On the other hand, if $k$ does have this form, then there exists $n \in \Z_{>0}$ such that $\vac \otimes e^{nc}$ is a singular vector in $\im \psi^k$ generating a proper nonzero ideal.  Thus, $\im \psi^k$ is not simple in this case.
\end{proof}

A consequence of \cref{thm:rmodirred,thm:simpleembed} is that we get families of \rhwms\ for the simple \bp\ \voa\ $\sbpk$, at least when $p \ne 1,2$.
\begin{theorem} \label{thm:rmodirred-simple}
	Assume that $2k+3 \notin \N$ and that $M$ is an irreducible \hw\ $\szamk$-module $M$ whose \hwv\ $u$ has $T_0$-eigenvalue $\Delta$ and $W_0$-eigenvalue $w$.  Then:
	\begin{itemize}
		\item $\rmod{M}{\lambda}$ is an indecomposable $\sbpk$--module.
		\item $\rmod{M}{\lambda}$ is irreducible if and only if the polynomial $p^{\Delta,w}_k$, defined in \eqref{eq:defpoly}, has no roots in the coset $\lambda + \Z$.
		\item $\rmod{M}{\lambda}$ has no \hwvs\ and its \chwvs\ are precisely the $u \otimes e^{-j+\mu c}$ for which $\mu \in \lambda + \Z$ satisfies $p^{\Delta,w}_k(\mu) = 0$.
	\end{itemize}
\end{theorem}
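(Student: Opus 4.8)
The plan is to obtain all three assertions by transporting the universal-level statement \cref{thm:rmodirred} across the embedding of \cref{thm:simpleembed}. Since the hypothesis is exactly $2k+3 \notin \N$, \cref{thm:simpleembed} tells us that $\psi^k = (\pi_k \otimes \id) \circ \phi^k$ has simple image $\sbpk \subseteq \szamk \otimes \lvoa$; I would begin by recording the consequence that, viewed as a surjection $\ubpk \twoheadrightarrow \sbpk$, the map $\psi^k$ has kernel equal to the (unique) maximal ideal of $\ubpk$.

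Next I would check that $\rmod{M}{\lambda} = M \otimes \lmod{-1}{\lambda}$ carries two \emph{a priori} different module structures that in fact agree. On one hand, an irreducible \hw\ $\szamk$-module $M$ pulls back along $\pi_k$ to an irreducible \hw\ $\uzamk$-module whose \hwv\ $u$ still has $T_0$-eigenvalue $\Delta$ and $W_0$-eigenvalue $w$, so \cref{thm:rmodirred} describes $\rmod{M}{\lambda}$ as a $\ubpk$-module (via $\phi^k$). On the other hand, $\rmod{M}{\lambda}$ is visibly a $\szamk \otimes \lvoa$-module, hence an $\sbpk$-module by restriction along the embedding of \cref{thm:simpleembed}. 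Because the $\uzamk$-action on $M$ factors through $\pi_k$, the $\ubpk$-action on $\rmod{M}{\lambda}$ is simply the $\sbpk$-action precomposed with $\psi^k$; in particular the maximal ideal of $\ubpk$ acts as zero. It follows that a subspace of $\rmod{M}{\lambda}$ is a $\ubpk$-submodule if and only if it is an $\sbpk$-submodule, and that a vector is a \hwv\ (resp.\ \chwv) for one structure precisely when it is for the other.

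With this identification in hand the three bullet points follow at once from \cref{thm:rmodirred}: the indecomposability of $\rmod{M}{\lambda}$, the criterion that irreducibility is equivalent to $p^{\Delta,w}_k$ having no root in the coset $\lambda + \Z$, and the description of the \chwvs\ as the $u \otimes e^{-j+\mu c}$ with $\mu \in \lambda + \Z$ and $p^{\Delta,w}_k(\mu) = 0$ (together with the absence of \hwvs) all carry over verbatim. I expect no genuine obstacle here: the only essential input is the existence of the embedding, which is where $2k+3 \notin \N$ enters, and the only step needing slight care is confirming that the $\ubpk$-action on $\rmod{M}{\lambda}$ descends to $\sbpk$, so that passing to the simple quotient does not enlarge the submodule lattice. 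Alternatively, one could bypass \cref{thm:rmodirred} and rerun the arguments of \cref{sec:almost,sec:rmodirred} with $\szamk$ in place of $\uzamk$, using that $\szamk$ is irreducible over itself and hence trivially has only top-submodules.
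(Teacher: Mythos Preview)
Your proposal is correct and matches the paper's approach exactly: the paper does not give a separate proof for this theorem but simply presents it as ``a consequence of \cref{thm:rmodirred,thm:simpleembed}'', and you have spelled out precisely the transfer argument that this entails. The key observation you make --- that the $\ubpk$-action on $\rmod{M}{\lambda}$ factors through $\sbpk$, so the submodule lattice and (conjugate) highest-weight vectors are unchanged --- is the only point requiring articulation, and you have handled it correctly.
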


\begin{remark} \label{rem:examples}
	\leavevmode
	\begin{itemize}
		\item When $k=-\frac{9}{4}$, $\szam{-9/4} = \C\vac$ and so \cref{thm:simpleembed} gives one family of \rhw\ $\sbp{-9/4}$-modules.  This family was first constructed in \cite[Thm.~7.2]{AK}.
		\item When $k=-\frac{5}{3}$, we also have $\szam{-5/3} = \C\vac$, hence one family of \rhw\ $\sbp{-5/3}$-modules.  This family may be constructed by noting that $\sbp{-5/3}$ is a $\Z_3$-orbifold of the rank-$1$ bosonic ghost system \cite[Prop.~5.9]{AK} and that this ghost system admits a family of \rhwms\ \cite{RidBos14}.
		\item When $p=2$, so $k \in \{-\frac{3}{2}, -\frac{1}{2}, \frac{1}{2}, \dots\}$, $\sbpk$ is rational \cite{ArBP}.  It therefore has no such families of \rhwms.
		\item When $k=-1$, $\sbp{-1}$ is isomorphic to the rank-$1$ Heisenberg \voa\ \cite{AdaCon18}.  It therefore also has no such families of \rhwms.
		\item For $k \in \N$, the relations $(G^{\pm}  _{-1})^{k+2} \vac = 0$ in $\sbpk$ (\cite{AK} and \cref{lem:easycalc} above) imply that the Zhu algebra $\zhu{\sbpk}$ has only finite-dimensional irreducible modules, more precisely modules of dimension at most $k+2$.  As there are no infinite-dimensional irreducible $\zhu{\sbpk}$-modules, $\sbpk$ likewise has no such families of \rhwms.
	\end{itemize}
\end{remark}

\begin{remark}
	In the case of the affine vertex algebras $\uaff{k}{\SLA{sl}{2}}$ and $\saff{k}{\SLA{sl}{2}}$, the irreducibility of the \rhwms\ was discussed in \cite{AdaRea19} and \cite{KR-2019}, using different techniques. As a consequence of our results (with some minor modifications), we can now give a new proof of the irreducibility of these modules.

	In particular, \cite{AdaRea19} showed that all the \rhw\ $\saff{k}{\SLA{sl}{2}}$-modules could be realised in the form $\mathcal E^ {\lambda} _{r,s} = M_{r,s} \otimes \Pi_{-1} (\lambda)$, where
	\begin{equation}
		k+2 = \frac{p'}{p}, \quad \text{for some coprime}\ p,p' \in \Z_{\ge2},
	\end{equation}
	and, for $r=1,\dots,p-1$ and $s=1,\dots,p'-1$, $M_{r,s}$ is the irreducible \hw\ Virasoro module of central charge and conformal weight
	\begin{equation}
		c_{p,p'} = 1 - \frac{6 (p-p')^2}{pp'} \quad \text{and} \quad h_{r,s} = \frac{(p'r-ps)^2-(p-p')^2}{4pp'},
	\end{equation}
	respectively.  Completely analogous arguments to those resulting in \cref{thm:rmodirred,thm:rmodirred-simple} then prove that $\mathcal E^ {\lambda} _{r,s}$  is irreducible if and only if $\lambda \notin  \lambda^{\pm} _{r,s } + {\Z}$, where $  \lambda^{\pm} _{r,s }$ is as in \cite[Sec.~7]{AdaRea19}.
\end{remark}

\section{Critical-level results} \label{sec:critlev}

A critical level definition of the  \bp\ algebra was investigated in \cite{Ar-crit,GK}.
\begin{definition} \label{def:critbp}
	At the critical level $k=-3$, the \bp\ vertex algebra $\ubp{-3}$ is the universal vertex algebra generated by fields $S$, $J$, $G^+$ and $G^-$ subject to the following \opes:
	\begin{equation} \label{ope:crit}
		\begin{gathered}
			J(z)J(w)\sim -\frac{1}{(z-w)^2}, \qquad
			J(z)G^{\pm}(w)\sim\pm \frac{G^{\pm}(w)}{z-w}, \qquad
			G^{\pm}(z)G^{\pm}(w)\sim0, \\
			S(z)G^{\pm}(w)\sim 0, \qquad
			S(z)J(w)\sim 0, \qquad
			S(z)S(w)\sim 0, \\
			\begin{aligned}
				G^+(z)G^-(w)\sim &\frac{6}{(z-w)^3}-\frac{6J(w)}{(z-w)^2} +\frac{3:J(w)J(w): - 3 \pd J(w) - S(w)}{z-w}.
			\end{aligned}
		\end{gathered}
	\end{equation}
\end{definition}

\begin{remark}
	We can formally obtain the definition of $\ubp{-3}$ given above by substituting $S=(k+3)L$ into \cref{def:bp} and then setting $k=-3$.
\end{remark}

Denote the centre of $\usl{-3}$ by $\centusl$.  It is a commutative vertex algebra generated by two fields \cite{FeiAff92}
\begin{equation}
	S^2(z)  = \sum_{n\in\Z} S^2_n z^{-n-2} \qquad \text{and} \qquad S^3(z) = \sum_{n\in\Z} S^3_n z^{-n-3}.
\end{equation}
Define the  operator $d \in \End (\centusl )$ by
\begin{equation}
	[ d, S^m_n] = -n S^m_n, \quad n \in {\Z},\ m = 2,3.
\end{equation}
By setting $d \vac = 0$, this gives $\centusl$ the structure of a $\Z_{\ge 0} $-graded vertex algebra.

Direct calculation now gives the following critical-level version of \cref{thm:realisation}.  The injectivity follows in exactly the same way as in \cref{sec:injective}.
\begin{theorem} \label{thm:critrealisation}
	At the critical level $k=-3$, there is an \emph{injective} vertex algebra homomorphism $\phi^{-3} \colon \ubp{-3} \to \centusl \otimes \lvoa$, uniquely determined by
	\begin{equation} \label{eq:critrealisation}
		\begin{gathered}
			G^+ \mapsto \vac \otimes e^c, \qquad J \mapsto \vac \otimes j, \qquad S \mapsto S^2 \otimes \vac, \\
			\begin{aligned}
				G^- &\mapsto \left(S^3 - \frac{1}{2} \pd S^2\right) \otimes e^{-c} + S^2 \otimes i_{(-1)} e^{-c} \\
				&\mspace{40mu} - \vac \otimes \left(i_{(-1)}^3 - 3 i_{(-2)} i_{(-1)} + 2 i_{(-3)}\right) e^{-c}.
			\end{aligned}
		\end{gathered}
	\end{equation}
\end{theorem}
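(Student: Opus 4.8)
The plan is to mimic the two-part strategy used for \cref{thm:realisation}, namely first checking that the assignment \eqref{eq:critrealisation} respects the defining \opes\ of $\ubp{-3}$ (so that $\phi^{-3}$ is a well-defined \voa\ homomorphism, using the universality of $\ubp{-3}$), and then checking injectivity by a \pbw-basis/triangularity argument identical in spirit to the one in \cref{sec:injective}. Since the generators $S$, $J$, $G^{\pm}$ of $\ubp{-3}$ carry conformal weights $2$, $1$, $1$, $2$ and the target fields $S^2$, $j$, $e^c$, and the $G^-$-image all carry the matching weights, the homomorphism is at least weight-compatible, and one only needs to verify the singular terms of each \ope\ in \eqref{ope:crit}.

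First I would verify the \opes: the relations $J(z)J(w)\sim -(z-w)^{-2}$, $J(z)G^\pm(w)\sim \pm G^\pm(w)/(z-w)$ and $G^\pm(z)G^\pm(w)\sim 0$ follow from $\bilin{j}{j}=-1$ at $k=-3$, from $[e^c_m, j_n]=-e^c_{m+n}$, and from $c$ being isotropic, exactly as in the noncritical case. The relations $S(z)S(w)\sim 0$, $S(z)J(w)\sim 0$ and $S(z)G^\pm(w)\sim 0$ reduce to the statement that $S^2$ is central in $\centusl$ and that it commutes with everything built from $j$, $e^c$; since $S^2, S^3$ generate a \emph{commutative} vertex algebra, $S^2 \otimes \vac$ manifestly has trivial \ope\ with $S^3\otimes\vac$, $\vac\otimes j$, $\vac\otimes e^{\pm c}$, and hence with the $G^-$-image. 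The only substantive computation is the $G^+(z)G^-(w)$ \ope, which must reproduce the third-, second- and first-order poles in \eqref{ope:crit} with coefficients $6$, $-6J$, and $3\no{JJ}-3\pd J - S$. This is the formal $k\to-3$ limit (after the substitution $S=(k+3)L$) of the computation already sketched for \cref{thm:realisation}: the third-order pole comes from $G^+_2 G^- $ evaluated via $e^c_2$ acting on the cubic-in-$i_{(-n)}$ term, giving $(1-3+2)\,e^c_{-1}e^{-c}=$ the appropriate multiple of $\vac$ (here the coefficients $3(k+2)\to -3$, $2(k+2)^2\to 2$ at $k=-3$ reproduce the constant $6$ after tracking normalisations), while the first-order pole picks up $S^2\otimes\vac$ from the $S^2\otimes i_{(-1)}e^{-c}$ term together with $j(w)$-dependent contributions from $e^c(z)e^{-c}(w)$ via \eqref{eq:lvoaope}. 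I would carry this out with \textsc{OPEdefs} as in the body of the paper, or by hand, and record that all singular coefficients match.

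For injectivity, I would run the argument of the second proof of \cref{thm:realisation} verbatim: $\centusl$ has a \pbw-type basis $\{S^2_{(-\mu)} S^3_{(-\nu)}\vac : \mu,\nu\in\parts\}$ (it is freely generated as a commutative vertex algebra by the two fields $S^2, S^3$), $\lvoa$ has the refined basis $B_{\lvoa}$ of \cref{prop:betterbasis}, and $\ubp{-3}$ has a \pbw-type basis $\{J_{(-\mu)}G^+_{(-\nu)}S_{(-\rho)}G^-_{(-\sigma)}\vac\}$ by universality. Applying $\phi^{-3}$ to such a basis vector produces a leading term $S^2_{(-\rho)}S^3_{(-\sigma)}\vac \otimes j_{(-\mu)}e^c_{(-\nu)}e^{-\ell(\sigma)c}$ plus terms with strictly fewer $S^3$-modes or with a $W$-type partition of strictly smaller weight (the role of $W$ is now played by $S^3$); the same leading-term/triangularity bookkeeping shows these images are linearly independent. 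I expect the only real obstacle to be purely computational — confirming the exact numerical coefficients in the $G^+G^-$ \ope\ and making sure the $k\to-3$ specialisation of the $G^-$-image in \eqref{eq:realisation}, namely replacing $W+\tfrac12(k+2)(k+3)\pd T \to S^3-\tfrac12\pd S^2$, $(k+3)T\to S^2$, and $3(k+2)\to-3$, $2(k+2)^2\to 2$, is internally consistent — but no new conceptual ingredient beyond \cref{thm:realisation} and \cref{sec:injective} is required.
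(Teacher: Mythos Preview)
Your proposal is correct and follows essentially the same approach as the paper, which simply states that the homomorphism is verified by ``direct calculation'' and that ``the injectivity follows in exactly the same way as in \cref{sec:injective}.'' One small slip: in your third-order pole sketch you write $(1-3+2)$, but at $k=-3$ the relevant combination is $1 - 3(k+2) + 2(k+2)^2 = 1 - (-3) + 2 = 6$, matching the coefficient in \eqref{ope:crit}; otherwise your outline is exactly what the paper intends.
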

\begin{remark}
	It is proved in \cite{FeiAff92}  that $\centusl$ is isomorphic to the critical-level principal W-algebra $\uwalg{-3}{\SLA{sl}{3}}{f_{\text{pr}}} = \uzam{-3}$.
\end{remark}

Since $ \centusl $ is a commutative vertex algebra generated by $S^2(z)$ and $S^3(z)$, its irreducible modules are $1$-dimensional and parametrised by $\chi_2, \chi_3 \in {\C}((z))$ such that
\begin{equation}
	\chi_m (z) = \sum_{n \in {\Z}} \chi_m (n) z^{-n-m}, \quad m=2,3,
\end{equation}
and $S^m_n$ acts as multiplication by $\chi_m (n)$ on the irreducible module.  We shall therefore denote the irreducible modules by $L_{\chi_2,\chi_3}$.

Consider the ``relaxed'' $\ubp{-3}$-module
\begin{equation}
	R_{\chi_2, \chi_3}(\lambda) = L_{\chi_2, \chi_3} \otimes \lmod{-1}{\lambda}
\end{equation}
(actually, this module is of Wakimoto type \cite{FreVer01}).  The question of when this module is irreducible may be treated using methods from \cite{Ad-2007} and we hope to study this in forthcoming publications.

Here, we consider the case in which $R_{\chi_2, \chi_3} (\lambda)$ is $\Z_{\ge 0}$-gradable.  Since \cref{thm:critrealisation} gives $\ubp{-3}$ the grading defined by $d + t_0$, where $t$ is the conformal vector of $\lvoa$ (see \cref{sec:lvoa}), $R_{\chi_2, \chi_3} (\lambda)$ will be $\Z_{\ge 0}$-gradable if and only if $L_{\chi_2,\chi_3}$ is gradable by $d$.  This, in turn, requires that the $S^m_n$, $m=2,3$ and $n\in\Z$, act trivially unless $n=0$.  We therefore conclude that $R_{\chi_2, \chi_3} (\lambda)$ is $\Z_{\ge 0}$-gradable if and only if
\begin{equation} \label{characters-1}
	\chi_2 (z) = \frac{\Delta}{z^2} \quad \text{and} \quad \chi_3(z) = \frac{w}{z^3},
\end{equation}
for some $ \Delta, w \in {\C}$.  Moreover, $\tp{R_{\chi_2, \chi_3} (\lambda)} = L_{\chi_2,\chi_3} \otimes \tp{\lmod{-1}{\lambda}}$ will then be an $\zhu{\ubp{-3}}$-module.

\begin{theorem}
	Assume that $\chi_2(z)$ and $\chi_3(z)$ are given by \eqref{characters-1} such that
	\begin{equation}
		g^{\Delta,w}(x) = w + \Delta +  (\Delta-2)  x -  3x^2-x^3
	\end{equation}
	has no roots in the coset $\lambda + {\Z}$. Then, $ R_{\chi_2, \chi_3} (\lambda)$ is an irreducible  $\ubp{-3}$-module.
\end{theorem}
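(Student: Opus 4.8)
The plan is to mirror the arguments of \cref{sec:almost}. First I would show that $R_{\chi_2,\chi_3}(\lambda)$ is almost-irreducible as a $\ubp{-3}$-module (in the sense of \cref{def:almost}); then I would invoke \cref{prop:irred} to reduce the problem to showing that the top space $\tp{R_{\chi_2,\chi_3}(\lambda)}$ is an irreducible $\zhu{\ubp{-3}}$-module; and finally I would establish the latter by computing the zero modes $G^{\pm}_0$ explicitly. The critical-level case is in fact lighter than \cref{thm:rmodirred}, because the Zamolodchikov factor has been replaced by the one-dimensional module $L_{\chi_2,\chi_3} = \C u$; in particular the analogues of \cref{thm:rmodtopgen,thm:rmodonlytopsubmod} will become immediate rather than requiring a recursion.

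For almost-irreducibility, I would first note that the realisation \eqref{eq:critrealisation} sends $J \mapsto \vac \otimes j$ and $G^+ \mapsto \vac \otimes e^c$, so, exactly as in \cref{sec:almost}, the image of $\ubp{-3}$ under $\phi^{-3}$ (with which we identify $\ubp{-3}$) contains $\vac \otimes U$, where $U \subset \lvoa$ is the vertex subalgebra generated by $j$ and $e^c$. Since $L_{\chi_2,\chi_3}$ is one-dimensional, $R_{\chi_2,\chi_3}(\lambda) = u \otimes \lmod{-1}{\lambda}$ and $\tp{R_{\chi_2,\chi_3}(\lambda)} = u \otimes \tp{\lmod{-1}{\lambda}}$ as vector spaces. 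Top-generation is then immediate from \cref{prop:lmodtopgen}: acting on $\tp{R_{\chi_2,\chi_3}(\lambda)}$ with $\vac \otimes U \subseteq \ubp{-3}$ already produces all of $R_{\chi_2,\chi_3}(\lambda)$. Likewise, given any nonzero submodule $N$ and a weight vector $w \in N$, \cref{prop:lmodonly} lets us act with $\vac \otimes U$ to move $w$ into $u \otimes \tp{\lmod{-1}{\lambda}} = \tp{R_{\chi_2,\chi_3}(\lambda)}$ without leaving $N$; hence $R_{\chi_2,\chi_3}(\lambda)$ has only top-submodules. By \cref{prop:irred}, it then remains only to prove that $\tp{R_{\chi_2,\chi_3}(\lambda)}$ is an irreducible $\zhu{\ubp{-3}}$-module.

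The top space is graded by $J_0$ with one-dimensional graded pieces $\C\, u \otimes e^{-j+(\lambda+n)c}$, $n \in \Z$, and $S_0$ acts as the scalar $\Delta$, so the $\zhu{\ubp{-3}}$-action on it is governed entirely by the ladder operators $G^+_0$ and $G^-_0$. As in \eqref{eq:G+inj}, $G^+_0\bigl(u \otimes e^{-j+(\lambda+n)c}\bigr) = u \otimes e^{-j+(\lambda+n+1)c} \ne 0$, so $G^+_0$ always acts bijectively; thus the top space is irreducible if and only if $G^-_0$ also acts bijectively. A direct computation, parallel to the one yielding \eqref{eq:G-inj} but now using \eqref{eq:critrealisation} together with the facts that $S^2_n$ and $S^3_n$ act on $L_{\chi_2,\chi_3}$ as $\Delta\delta_{n,0}$ and $w\delta_{n,0}$ and that $i_0$ acts on $e^{-j+(\lambda+n)c}$ as $\lambda+n$, should give
\begin{equation*}
	G^-_0 \bigl( u \otimes e^{-j+(\lambda+n)c} \bigr) = g^{\Delta,w}(\lambda+n)\, u \otimes e^{-j+(\lambda+n-1)c},
\end{equation*}
the terms $S^3 \otimes e^{-c}$, $-\tfrac{1}{2}\pd S^2 \otimes e^{-c}$ and $S^2 \otimes i_{(-1)}e^{-c}$ of \eqref{eq:critrealisation} contributing $w$, $\Delta$ and $\Delta(\lambda+n)$ respectively, while $-\vac \otimes \bigl(i_{(-1)}^3 - 3 i_{(-2)} i_{(-1)} + 2 i_{(-3)}\bigr)e^{-c}$ contributes $-(\lambda+n)^3 - 3(\lambda+n)^2 - 2(\lambda+n)$, exactly as in the elementary identities displayed in the proof of \cref{thm:rmodirred}. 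The hypothesis that $g^{\Delta,w}$ has no root in $\lambda+\Z$ says precisely that $g^{\Delta,w}(\lambda+n) \ne 0$ for every $n \in \Z$, so $G^-_0$ is bijective and $\tp{R_{\chi_2,\chi_3}(\lambda)}$ is irreducible over $\zhu{\ubp{-3}}$; \cref{prop:irred} then delivers the theorem. The only step that calls for actual calculation, and hence the only place where care is needed, is the displayed formula for $G^-_0$; but it is a routine specialisation of the computation behind \eqref{eq:G-inj}, so I do not anticipate any serious obstacle.
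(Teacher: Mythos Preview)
Your proposal is correct and follows essentially the same approach as the paper: reduce to irreducibility of the top space over $\zhu{\ubp{-3}}$ via almost-irreducibility (which, as you observe, is immediate here because $L_{\chi_2,\chi_3}$ is one-dimensional, so only the $\vac\otimes U$-action is needed), then verify that $G^+_0$ and $G^-_0$ act bijectively by the explicit zero-mode computation. The paper's displayed formula for $G^-_0$ reads $g^{\Delta,w}(\lambda+n-1)$ rather than your $g^{\Delta,w}(\lambda+n)$, but this shift is immaterial to the conclusion since the hypothesis concerns the whole coset $\lambda+\Z$.
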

\begin{proof}
	As in the proof of \cref{thm:rmodirred}, $ R_{\chi_2, \chi_3} (\lambda)$ is an irreducible $\ubp{-3}$-module if and only if $\tp{R_{\chi_2, \chi_3} (\lambda)}$ is an irreducible $\zhu{\ubp{-3}}$-module.  Moreover, this holds if and only if $G^+_0$ and $G^-_0$ act bijectively on $\tp{R_{\chi_2, \chi_3} (\lambda)}$.
	Here, we set $G^+(z)=\sum_n G^+_nz^{-n-1}$ and $G^-(z)=\sum_n G^-_nz^{-n-2}$.
	For $u \in  L_{\chi_2, \chi_3}$, we have
	\begin{equation}
		\begin{aligned}
			G^+_0 (u \otimes e^{-j+(\lambda+n)c}) &= u \otimes e^c_0 e^{-j+(\lambda+n)c} = u \otimes e^{-j+(\lambda+n+1)c} \\ \text{and} \quad
			G^-_0 (u \otimes e^{-j+(\lambda+n)c}) &=  g^{\Delta,w}(\lambda + n -1) u \otimes e^{-j+(\lambda+n-1)c},
		\end{aligned}
	\end{equation}
	so $G^+_0$ always acts bijectively and $G^-_0$ acts bijectively if and only if $g^{\Delta,w}(x)$ has no roots in $\lambda+\Z$.
	The proof follows.
\end{proof}

\flushleft
\providecommand{\opp}[2]{\textsf{arXiv:\mbox{#2}/#1}}
\providecommand{\pp}[2]{\textsf{arXiv:#1 [\mbox{#2}]}}

\end{document}